\DeclareMathOperator*{\esssup}{ess\,sup}
\theoremstyle{plain}
\newtheorem{thm}{Theorem}[section] 
\newtheorem{corollary}{Corollary}[thm]
\newtheorem{lemma}[thm]{Lemma}
\newtheorem{prop}[thm]{Proposition}
\theoremstyle{definition}
\newtheorem{defn}[thm]{Definition} 
\newtheorem{assump}[thm]{Assumption} 
\theoremstyle{remark}
\newtheorem{rmk}{Remark}
\newcommand{\R}{\mathbb{R}}
\newcommand{\N}{\mathbb{N}}
\newcommand{\Prob}{\mathbb{P}}
\newcommand{\E}{\mathbb{E}}
\newcommand{\dx}{\, \mathrm{d}x}
\newcommand{\dy}{\, \mathrm{d}y}
\newcommand{\ds}{\, \mathrm{d}s}
\newcommand{\dt}{\, \mathrm{d}t}
\newcommand{\di}{\, \mathrm{d}}
\begin{document}

\title{An interacting particle system for the front of an epidemic \\ advancing through a susceptible population}

\author{Eliana Fausti\thanks{Department of Mathematics, Imperial College London, {\tt eliana.fausti@imperial.ac.uk}.}
		\and
		Andreas S{\o}jmark\thanks{Department of Statistics, London School of Economics, {\tt a.sojmark@lse.ac.uk}.}}

\date{\today}

\maketitle

\begin{abstract}
We introduce an interacting particle system that models the spread of an epidemic in terms of heterogeneous diffusive dynamics, rather than exogenous contact and transmission rates at the population level as in classical compartmental models. Each individual has a one-dimensional level of shielding that evolves according to a stochastic differential equation reflected at the advancing front of the epidemic. The front is driven by cumulative infections, and collisions with it represent at-risk situations which may lead to infection depending on a non-Markovian mechanism that involves the local time, the intrinsic transmissibility, and the current contagiousness within the population. We give a rigorous construction of the system and develop two key technical tools: a compensated martingale property for the infected proportion and a general result on how local time transforms under a random time-dependent bijection of the state space. The former yields a decomposition of the expected number of new infections that parallels a corresponding decomposition in the SIR model. The latter allows us to represent the law of each particle, after suitable conditioning, as a generalised elastic Brownian motion with drift.
\end{abstract}
	
\begin{adjustwidth}{0.6cm}{0.6cm}
\begin{flushleft}
\textbf{MSC}: Primary: 60H10, 60K35, 92D30; Secondary: 60G44, 60J55.
			
\textbf{Keywords}: Epidemiology, SIR model, interacting diffusions, local time, non-Markovian dynamics,  compensated martingales, elastic Brownian motion.
\end{flushleft}
\end{adjustwidth}

\section{Introduction}\label{sect:intro}

This paper proposes an interacting particle system to describe the spread of an epidemic through a population of $n$ susceptible individuals. Each individual is represented by a real-valued diffusion process that tracks their level of shielding from the disease. The processes reflect off a moving boundary, which advances with every infection, and the infections are determined by collisions with this boundary in a way that depends on how contagious the system currently is. The precise mechanism is a coupled generalisation of the notion of elastic Brownian motion due to Feller.

In the study of Brownian motion on the half-line \cite{Feller_1952, ito_mckean, mckean_local}, \emph{elastic} Brownian motion is introduced as the natural object between \emph{absorbing} and \emph{reflecting} Brownian motion. The latter can be expressed as $X_t=B_t+\tfrac{1}{2}\ell_t$, where $B$ is a standard Brownian motion and
\[
    \ell_t = \lim_{\varepsilon \searrow 0} \frac{1}{\varepsilon} \int_0^t \mathbbm{1}_{[0,\varepsilon)}(X_s) \di s
\]
is the local time of $X$ at the origin. The constant $1/2$ is sometimes incorporated in the definition of $\ell$, but the above definition is more appropriate for our setting. Following \cite[Section 2.8]{mckean_local}, for any coefficient $\gamma >0$, the elastic Brownian motion $X^\gamma$ can then be obtained by setting $X^\gamma_t := X_t$ for $t < \tau$ and $X^\gamma_t := \dagger$ for $t\geq  \tau$, where $\dagger$ is a cemetery state, subject to the conditional law
\begin{equation}\label{eq:elastic_BM}
    \Prob(\tau \leq t \mid X ) = 1- \exp \{-\gamma \ell_t\},
\end{equation}
which amounts to the first time $\gamma \ell_t> \chi$, where $\chi$ is a standard Exponential variable independent of $X$. Trivially, $\gamma=0$ returns the reflecting Brownian motion, while properties of the local time give that, as $\gamma\rightarrow \infty$, one obtains the absorbing case $\tau = \inf \{t>0 : X_t =0 \}$. Note also that
\begin{equation}\label{eq:elasticBM_prob}
    \Prob\bigl(\tau \leq t +h \mid X , \{t < \tau \}\bigr)=\gamma \cdot (\ell_{t+h}-\ell_{t}) + o(\ell_{t+h}-\ell_{t}),\quad \text{as}
\;h\searrow 0.
\end{equation}
Consequently, the probability of absorption within a small time interval is approximately $\gamma$ times the expected extent to which $X^\gamma$ collides with the origin during that interval, as measured by the local time. The infection mechanism introduced below will amount to a generalisation of \eqref{eq:elasticBM_prob} within a particle system that interacts through the reflected dynamics and the parameter $\gamma$.

\subsection{The epidemic model}\label{sect:explain_particle_sys}

Consider a susceptible population of size $n$ which is exposed to an epidemic. Each individual is assigned an initial position $X_0^i$ on the half-line $[a_0,\infty)$, where $a_0$ is the starting point of what we call the \emph{advancing front} of the epidemic, denoted by $A^n_t$ at time $t$ (to be defined below). The value of $X_0^i$ represents the $i$'th individual's initial level of \emph{shielding} from the disease. This can capture a summary of how far away the $i$'th individual is from the disease geographically, the lifestyle or social demographic of the individual, and any preventive measures the individual is taking to avoid being at risk of contracting the disease (even if the disease has a presence nearby). In this way, the population can be broken up according to different characteristics initially. By relying on the level of shielding as a single index summarising various traits, we only require one spatial dimension. This is of course a simplification of reality, but the reliance on such one-dimensional indices is a typical approach in practice and avoids making the model too complicated.

Starting from $X_0^i$, we let the current level of shielding $X_t^i$ evolve randomly according to some stochastic differential equation (SDE) with drift and volatility coefficients that may depend on the level of shielding. This SDE is taken to reflect off the front $A^n$, and we say that $X_t^i$ is \emph{at-risk} precisely when it is colliding with the front. Being at-risk does not necessarily lead to infection, but it \emph{does} imply a probability of this happening which we take to obey the following principle. Let $\ell^{i}$ denote the local time of $X^i$ along $A^n$.
Then, for a given realisation of the trajectory $X^i$, we want the conditional probability of infection in a small interval $(t,t+h]$ to be of the form
\begin{equation}\label{eq:rate_of_infection}
    \Prob\bigl(\tau^i \in(t,t+h] \mid X^i, \gamma^n, \{t<\tau^i\} \bigr) \approx \int_t^{t+h} \gamma^n(s) \di \ell^i_s,
\end{equation}
where $\gamma^n$ can itself be stochastic. We refer to $\gamma^n(t)$ as the \emph{effective rate of infection} at time $t$ given that an individual is \emph{at-risk}. The instantaneous accumulation of local time $\di \ell^i$ is how we quantify the \emph{extent} to which the $i$'th individual is currently \emph{at-risk}, due to collisions with the front $A^n$.

The infection mechanism \eqref{eq:rate_of_infection} resembles \eqref{eq:elasticBM_prob}, but the form of $A^n$ and $\gamma^n$, defined in  \eqref{eq:defn_adv_front} and \eqref{eq:currently_contagious} below, couple the infection times to the system dynamics: they affect the domain $[A^n_t,\infty)$ and they impact the conditional probability of infection through both the local time and the effective rate of infection. Thus, one cannot proceed as for the elastic Brownian motion in \eqref{eq:elastic_BM}, and \eqref{eq:rate_of_infection} is only heuristic. Its precise specification is a central part of the construction presented in Section \ref{sect:main_results}.

As regards the advancing front $A^n$, we take this to be of the form
\begin{equation}\label{eq:defn_adv_front}
    A^n_t = a_0 + \alpha \int_{t-\bar{d}}^t \varrho(t-s) I^n_{s} \ds \quad \text{with}\quad     I^n_t = \frac{1}{n}\sum_{i=1}^n \mathbbm{1}_{[0,t]}(\tau^i),
\end{equation}
where $I^n$ is the total proportion of infections and $\varrho $ is a non-negative kernel integrating to $1$ on $[0,\bar{d}]$. We refer to $\varrho$ as the \emph{infection-to-recovery kernel} and $\bar{d}>0$ as the corresponding \emph{duration}, while $\alpha>0$ is the \emph{coefficient of proportionality} for how infections expand the reach of the disease. With each infection, the front advances by an amount $\alpha/n$ in a gradual motion determined by $\varrho$, until the effect cedes after $\bar{d}$ units of time. As the front advances, it becomes more likely for individuals with higher levels of shielding to find themselves at risk of infection.

It remains to clarify that the effective rate of infection $\gamma^n(t)$, in at-risk instances, should depend on a notion of the current degree of contagiousness within the system. That is, wherever the front has advanced to, given that an individual is at-risk, the likelihood of infection should be a function of how many individuals were recently infected and how contagious they currently are, together with the intrinsic transmissibility of the disease. Thus, we take $\gamma^n$ to be of the functional form
\begin{equation}\label{eq:currently_contagious}
    \gamma^n(t)=\gamma(t,C^n_t) \quad \text{for}\quad  C^n_t = \int_{t-\bar{d}}^t \varrho(t-s)(I^n_s - I^n_{s-\bar{d}}\,)\ds,
\end{equation}
where we refer to $C^n$ as the \emph{current index of contagiousness}. Note that both $C^n$ and $I^n$ live in $[0,1]$. We define it this way so that (i) very recent infections are not weighted too strongly, as those are only starting to become infectious, and (ii) any infections that were contributing to advance the front within the last $\bar{d}$ units of time are still considered but become less and less important. The definition also has the convenient property that a change of variables gives $C^n_t = (A^n_{t}-A^n_{t-\bar{d}})/\alpha$, so it can be expressed in terms of the change in the moving boundary.

\begin{rmk}
    As a concrete example, the kernel $\varrho$ could be a suitable Weibull or log-normal density which is zero at zero with negligible mass after $\bar{d}$ units of time (cut off and normalised). For the recent coronavirus pandemic, such a choice with around $\bar{d}=14$ days, is in line with how incubation times and infectiousness have been estimated; see e.g.~\cite{covid1, covid2}.
\end{rmk}

\subsection{Connections to the SIR model}\label{subsect:SIR}

It is interesting to compare our framework with the classical SIR model (see \cite[Ch.~2]{BrauerChavezFeng}). To this end, we stress that our particle system endogenises how individuals are at-risk and how they may, or may not, become infected when at-risk. Moreover, we note that, instead of trying to model contact between individuals, we lower the dimensionality by studying collisions of their shielding levels with our notion of the front of the epidemic. In the SIR model, the main variable is the extent of contact between individuals. However, this and the resulting infections are modelled at the population level and prescribed exogenously via, firstly, a general rate of contact between any individuals (infected or not) and, secondly, a general rate of infection given contact.

Departing from \cite{BrauerChavezFeng}, we shall use notation that facilitates our comparison. For the SIR model, we let $I_t$ denote the cumulative proportion of individuals that have been infected up to time $t$, in a population of size $n$, and we then have that $S_t=1-I_t$ is the proportion of susceptible individuals at time $t$. Moreover, we let $C_t$ denote the proportion of individuals that are currently infected at time $t$ (noting that this is typically denoted by $I$ in the SIR model). The SIR model posits that these three variables evolve deterministically, governed by three constants: the intrinsic transmissibility of the disease $\theta$, the general rate of contact within the population $\bar{c}$, and the duration of infectiousness $\bar{d}$. Setting $\beta := \theta \bar{c} $, the SIR model amounts to
\begin{equation}\label{eq:SIR_ODEs}
    \frac{\di}{\dt} I_t = \beta  S_t  C_t, \quad \frac{\di}{\dt}C_t = \frac{\di}{\dt} I_t - C_t / \bar{d},\quad S_t=1-I_t,
\end{equation}
where the term $C_t/ \bar{d}$ accounts for recovery of infected individuals at rate $1/ \bar{d}$.

Two key quantities are the \textit{basic reproduction number} $R_0$ and the \textit{effective reproduction number} $R_t$. These are defined, respectively, as (i) the (average) total number of cases caused by the first infection in a population where everyone is susceptible and (ii) the (average) total number of cases caused by a \emph{single} new infectious individual at time $t$. In the SIR model, holding $C_s=1/n$ and $S_s=S_t$ fixed, one new infection at time $t$ yields
\begin{equation}\label{eq:SIR_reproduction_new_infections}
   R_t = n(I_{t+\bar{d}} - I_t) = \beta S_t \bar{d}=R_0 S_t,\quad R_0=\beta \bar{d}.
\end{equation}

Instead of focusing on the basic reproduction number, as in \eqref{eq:SIR_reproduction_new_infections}, it is instructive to decompose the proportion of new infections on any interval $[t,t+h]$ as the threefold product
\begin{equation}\label{eq:SIR_new_infections}
    I_{t+h} - I_t = \int_t^{t+h}\!\!\!\!\!\!\!\!\!\!\!\!\!\!\!\!\!\!\!\!\! \underset{
    \text{proportion of susceptibles}}{\underbrace{S_{s}}} \cdot \underset{
    \text{rate of infection given contact}}{\underbrace{\theta C_{s}}}\cdot\underset{
    \text{extent of contact}}{\underbrace{\bar{c}\di s}} 
\end{equation}
Here we stress that `contact' just refers to contact with another individual, not necessarily contact with a currently infectious individual. This is why $C_s$ enters in the effective rate of infection given contact (multiplied by the intrinsic transmissibility $\theta$). It will follow from our results (see Section \ref{sect:main_results} below), that the infected proportion in our system admits a related decomposition of the form
\begin{equation}\label{eq:our_model_new_infections}
    \E\bigl[ I^n_{t+h} - I^n_{t} \mid \bar{\mathcal{F}}^n_t \bigr]  =  
    \E \Bigl[ \, \frac{1}{n}\sum_{i=1}^{n} \int_{t}^{t+h} \!\!\! \underset{\text{$i$ susceptible}}{\underbrace{\mathbbm{1}_{\{s<\tau^{i} \}}}} 
     \;\cdot\,\,\!\!\!\underset{\text{rate of infection given at-risk}}{\underbrace{\gamma(s,C_{s}^{n})}}  \!\!\!\!\cdot \,\;\,\underset{\text{extent of $i$ being at-risk}}{\underbrace{\di \ell^i_s }} \!\!\!\!\!\!\!\!\!\!\!\!\!\! \big\vert \;\bar{\mathcal{F}}^n_t\Bigr]
\end{equation}
where the filtration $\bar{\mathcal{F}}^n_t$ records the shielding levels and infection events on $[0,t]$. Since our system is stochastic, in contrast to \eqref{eq:SIR_new_infections}, the corresponding quantity is now the \emph{expected} number of new infections given the current information state. Moreover, we stress that \eqref{eq:our_model_new_infections} considers individuals, while \eqref{eq:SIR_new_infections} is purely macroscopic: the extent to which, and when, an individual is at-risk is an endogenous quantity that changes over time according to that individual's dynamics, whereas the SIR model is specified by exogenous population-wide rates. In particular, the number of new infections and the current contagiousness in our formulation depend on the realisations of the individual dynamics and is affected by the heterogeneity of the population.

As in \eqref{eq:SIR_reproduction_new_infections}, keeping $S\equiv S_t$ and $C\equiv 1/n$ fixed on $[t,t+\bar{d}]$, it follows from \eqref{eq:SIR_new_infections} that the effective reproduction number of the SIR model decomposes as
\begin{equation}\label{eq:eff_reproduction_number_SIR}
    R_t =n ( I_{t+\bar{d}} - I_t )= nS_t\; \cdot\; \frac{\theta}{n} \;\cdot\; \bar{c}\bar{d}.
\end{equation}
The number of susceptibles $nS_t$ is a given, and $R_t$ is then fully determined by the threefold product of this, the fixed rate of infection given contact $\theta/n$, and the fixed amount of contact $\bar{c}\bar{d}$ on $[t,t+\bar{d}]$. In our framework, we can similarly introduce an (expected) \emph{effective reproduction number} $R^n$ given the current state of the system: replacing $\gamma(s,C^n_s)$ by $\gamma(t,1/n)$ on $[t,t+\bar{d}]$ and keeping the set of susceptible individuals fixed, to isolate the effect of a single infection, \eqref{eq:our_model_new_infections} leads to
\begin{equation}\label{eq:our_eff_reptroduction_number}
    R^n_t = n\E[ I^n_{t+\bar{d}} - I^n_{t} \mid \bar{\mathcal{F}}^n_t ] = \sum_{i=1}^n\;\mathbbm{1}_{\{t<\tau^{i} \}} \; \cdot \; \gamma(t,\tfrac{1}{n}) \; \cdot \;\E\bigl[ \ell^i_{t+\bar{d}} -\ell^i_{t}    \mid \bar{\mathcal{F}}^n_t\bigr],
\end{equation}
where the front $A^n$ is either fixed at $A_t^n$ or taken to advance solely as $A^n_{s}=A^n_{t}+\alpha \int_{s-\bar{d}}^s \varrho(s-r)\frac{1}{n}\di r$ on $[t,t+\bar{d}]$ for the accumulation of local time. The rightmost term is total expected extent to which each susceptible will be at-risk during $[t,t+\bar{d}]$. Unlike the fixed quantity $\bar{c}\bar{d}$ in \eqref{eq:eff_reproduction_number_SIR}, this depends on the current state of each individual and their dynamics.

\subsection{Related literature}

Beyond the deterministic tradition, there is a large and growing
literature on stochastic SIR-type models focused on individual-based formulations with asymptotic results for large homogeneously mixing populations; see the survey \cite{britton_survey}. Under homogeneous mixing, \cite{pang_pardoux_2022} establishes functional limit theorems for a wide class of compartmental models in non-Markovian settings with generally distributed infectious periods, obtaining limits given by deterministic or stochastic Volterra equations. In \cite{forien_pang_pardoux}, this is extended to allow for infection-age dependent contagiousness, paralleling how the current index of contagiousness $C^n$ in our model varies over the duration of an infection according to the infection-to-recovery kernel $\varrho$. In a Markovian setting,~\cite{vuong_hauray_pardoux} establishes conditional propagation-of-chaos for a spatial SIR-type model with common noise, formulated in terms of an SDE system where infection occurs at a rate given by a bounded Lipschitz kernel applied to the relative positions of a given individual and all currently infected individuals.

The above works all concern large-population limit theorems for established classes of finite-population stochastic models, whereas our contribution is to construct and analyse a new interacting particle system in which infection arises endogenously through individuals being at-risk exactly when they collide with the front of the epidemic, as discussed in the previous subsections. Closer to our framework, but formulated as a reaction–diffusion system at the macroscopic level, is the model introduced in~\cite{berestycki_etal_plateaus} and further developed in~\cite{berestycki_desjardins_weitz_oury}: it
generalises the SIR model by incorporating behavioural heterogeneity in the susceptible population via a one-dimensional risk variable $a$ that diffuses on the positive half-line, similarly to the level of shielding in our model. In the notation of Section \ref{subsect:SIR}, it amounts to changing the first equation of the SIR dynamics \eqref{eq:SIR_ODEs} to
\begin{equation*}
\frac{\mathrm{d}}{\mathrm{d}t} I_t =\bar{\beta}_t S_t  C_t ,\quad  \bar{\beta}_t := \int_0^\infty \!\beta(a) f(t,a) \mathrm{d}a,
\end{equation*}
where $f(t,a)$ solves the Fokker--Planck equation for the risk variable and $\beta(a)$ is an increasing function which specifies the transmission rate for the susceptible proportion $S_t f(t,a)\mathrm{d}a$ of the population with risk level $[a,a+\mathrm{d}a]$. To account for this preferential infection at higher risk levels, the Fokker-Planck equation includes a sculpting term that moves mass towards lower risk levels in proportion to the share $C_t$ of the population that is currently infected. 

Individuals with low levels of shielding in our particle system correspond to high-risk parts of the population density in their setting. However, instead of a rate of infection $\beta(a)$ that acts across the entire population at all times, growing with the risk level $a$, we model the advance of the epidemic through the population so that it reaches individuals with higher levels of shielding and only then do those individuals become at-risk of infection. The reflection of the diffusing risk variable at zero in~\cite{berestycki_etal_plateaus, berestycki_desjardins_weitz_oury} serves only to confine it to the half-line. By contrast, it is precisely the local time of the collisions with the front of the epidemic, together with the current contagiousness, that determine whether or not an individual gets infected in our system.

In a different direction, \cite{Knight} studied the motion of an inert particle on the real line which is pushed away from a Brownian motion at a rate proportional to the local time of their collisions. Later, \cite{barnes} studied a system of $n$ Brownian motions with the rate of the inert particle now proportional to the empirical average of the local time accumulated along its trajectory. At least heuristically, \eqref{eq:our_model_new_infections} suggests a connection between how the inert particle and the front $A^n$ evolves: the rate of the former is directly proportional to the sum of the local times of the fully reflected Brownian motions, while the rate at which $A^n$ advances is implicitly linked to the sum of the local time of the non-infected particles with a nonlinear coefficient of proportionality $\gamma(t,C^n)$. We shall revisit this connection in Section \ref{sec:Barnes}. Here, we only note that the main focus of \cite{barnes} is the mean-field limit, while our focus is on making sense of the interaction through the infection mechanism \eqref{eq:rate_of_infection} in the finite system and developing some key properties of this, including a suitable martingale machinery which will in particular make the connection to \cite{barnes} precise.

Also related to the above, \cite{becherer, Becherer_finance} study a one-dimensional diffusion process reflected off a moving boundary given by a non-decreasing $C^1$ function of the local time along the boundary. As in \cite{Knight, barnes}, the process is fully reflected differently from our framework. Regarding our infection mechanism and how it is coupled to the front $A^n$ via $I^n$, we stress that \cite{baker_shkolnikov_undercooling} and \cite{hambly_meier} study the classical notion of elastic killing \eqref{eq:elastic_BM} for a Brownian motion that is shifted towards zero in proportion to $\mathbb{P}(\tau \leq t)$ with reflection at zero, where $\tau$ is the elastic killing time. This is motivated by the probabilistic analysis of the Stefan problem with kinetic undercooling on a half-line.

\subsection{Overview of the rest of the paper}

In Section \ref{sect:main_results} we present our main results on the well-posedness of the particle system and two key properties. The construction of the particle system is given in Section \ref{sec:existence_sys} with some technical details deferred to Appendix~\ref{appendix:particle_system}. The properties are established in Sections \ref{sect:lamperti} and \ref{sect:martingale_prop}.

In Section \ref{sect:lamperti}, we derive some general results on how the local time $\ell^\lambda_{t}(X)$ at $\lambda$ of a continuous semimartingale $X$ behaves under a time-dependent and possibly random bijection of the state $X_t \mapsto \Upsilon(t,X_t)$. For diffusion processes, the Lamperti transform is a prominent example of a useful such bijection (see \cite[Section 3]{luschgy_pages_06}). Utilising these results and a conditioning argument, we can represent each particle $X^{i,n}$ as a form of generalised elastic Brownian motion with drift.

In Section \ref{sect:martingale_prop}, we show that the infected proportion $I^n$ enjoys a compensated martingale property which gives us the decompositions \eqref{eq:our_model_new_infections} and \eqref{eq:our_eff_reptroduction_number} discussed above in Section \ref{subsect:SIR}. Moreover, it yields an asymptotic statement about the infected proportion, as the number of particles becomes large, which reveals a precise connection to a new variant of the system studied in \cite{barnes}.

\section{The particle system and its properties}\label{sect:main_results}

In this section, we introduce our assumptions and give our main results on the well-posedness of the particle system (Theorem \ref{thm:existence}) and its key properties (Theorems \ref{Thm:Brownian_transformation} and \ref{Thm:convergence_Barnes}).

\subsection{Well-posedness and key properties}

Beyond the notation introduced in Section \ref{sect:explain_particle_sys}, we let the drift and diffusion coefficients of the current level of shielding $X^i_t$ be denoted by $b(t,X_0^i,X_t^i)$ and $\sigma(t,X_0^i,X_t^i)$, respectively.

\begin{assump}[Structural conditions]\label{assump:coefficient_assumptions} Beyond joint measurability of the coefficients, on any time interval $[0,T]$ we assume that:
\begin{itemize}
    \item $b(t,x_0,x)$ is Lipschitz continuous in $x$ uniformly across $(t,x_0)$, and has at most linear growth in $(x_0,x)$ uniformly across $t$,
    \item $\sigma(t,x_0,x)$ is bounded, non-degenerate, and Lipschitz continuous in $(t,x)$ uniformly across $x_0$,
    \item $\gamma(t,x)$ is non-negative and jointly continuous in $(t,x)$, and
    \item $\varrho(t)$ is supported on $[0,\bar{d}]$, non-negative and right-continuous with $\Vert \varrho \Vert_{L^1(0,\bar{d})}=1$, for a given $\bar{d}>0$.
\end{itemize}
\end{assump}

Next, we specify the starting points and the random inputs that will drive the system dynamics. Throughout, we take as given a probability space $(\Omega, \Prob , \mathcal{F})$ that is large enough to support these.

\begin{assump}[Underlying inputs]\label{assump:inputs} We take as given the following random inputs:
\begin{itemize}
    \item a family of independent starting points $\{X_0^i\}_{i=1}^n$, which may or may not be random,
    \item a family of independent Brownian motions $\{B^i\}_{i=1}^n$, and
    \item a family of independent standard Exponential random variables $\{\chi^{j,(k)}\}_{j,k=1}^n$,
\end{itemize}
where all three families are mutually independent.
\end{assump}

Given the above, we define a filtration $(\mathcal{F}^n_t)_{t\geq 0}$ by
\begin{equation}\label{eq:full_filtration}
    \mathcal{F}^n_t:= \sigma\bigl({X_0^j,B_s^j, \{\chi^{j,(k)}\}_{k=1}^n } : s\in[0,t], j=1,\ldots,n  \bigr).
\end{equation}
In line with \cite{Knight} and the related works \cite{burdzy_nualart, burdzy_sylvester} on reflected Brownian motion with moving boundaries, we define the local time $\ell$ of a real-valued continuous semimartingale $X$ along a continuous (possibly random) curve $t \mapsto h(t)$ as
\begin{equation}\label{eq:local_time_defn}
    \ell_t(X) = \lim_{\varepsilon \searrow 0} \frac{1}{\varepsilon} \int_0^t \mathbbm{1}_{[h(s), h(s) + \varepsilon)} (X_s) \di \langle X\rangle_s,
\end{equation}
where $\langle X\rangle$ is the quadratic variation of $X$. We can now state our precise well-posedness result for the particle system discussed informally in the introduction.

\begin{thm}[Well-posed particle system]\label{thm:existence} Let Assumptions \ref{assump:coefficient_assumptions} and \ref{assump:inputs} hold, and denote by $\ell^{i,n}$ the local time of $X^{i,n}$ along the front $t\mapsto A_t^n$ on $[0,\tau^i)$ in the below. Then, Section \ref{sec:existence_sys} constructs an $\mathcal{F}^n_t$-adapted solution $\mathbf{X}^n_t = (X_t^{1,n}, \dots, X_t^{n,n})$ to
\begin{equation}\label{eq:sys_X}
    \left\{ \begin{array}{@{}l@{}l}
    \di X_t^{i,n} = b(t, X_0^{i,n}, X_t^{i,n}) \dt + \sigma (t, X_0^{i,n}, X_t^{i,n}) \di B^i_t + \tfrac{1}{2} \di \ell_t^{i,n}, & \quad t \in [0, \tau^i), \vspace{3pt}\\
    A_t^{n} = a_0 + \alpha\int_{t-\bar{d}}^t \varrho(t-s) I^{n}_{s} \ds, & \quad t \ge 0, \vspace{4pt}\\ 
    I_t^{n} = \frac{1}{n} \sum_{j=1}^n \mathbbm{1}_{[0,t]}(\tau^j), & \quad t \ge 0, \vspace{4pt}\\ 
    X_{t}^{i,n} = \dagger, &\quad t\geq \tau^{i}, 
    \end{array} \right.
\end{equation}
 living in $\left([A_t^n, \infty) \cup \{\dagger\} \right)^{n}$, 
    with initial conditions $X_0^{i,n} = X^i_0$ such that, for each  $i\in \{1,\ldots,n\}$, $\ell^{i,n}$ is flat off $\{t\in [0,\tau^i) :X^{i,n}_t = A_t^n\}$ and the infection time $\tau^i = \inf \{t \ge 0 \, : \, X^{i,n}_t = \dagger \}$ satisfies
\begin{equation}\label{eq:tau_i_prob}
    \Prob (\tau^i \le t \mid \hat{\mathcal{F}}^{i,n}_t) = 1 - \exp \left\{ - \int_0^t \gamma(s, \hat{C}^{n, (-i)}_s) \di \hat{\ell}_s^{i,n} \right\}, \quad t \ge 0,
\end{equation}
with respect to the reduced-information filtration
\begin{equation}\label{eq:reduced_filtration}
    \hat{\mathcal{F}}^{i,n}_t:= \sigma \bigl( (X_0^i,B_s^i), (X_0^j,B_s^j, \{\chi^{j,(k)}\}_{k=1}^n)  : s\in[0,t] , j\in \{ 1,\ldots,n  \}\! \setminus \! \{i\}    \bigr).
    \end{equation}
Here, each $\hat{\ell}^{i,n}$ in \eqref{eq:tau_i_prob} denotes the local time of the $i^{\text{th}}$ particle along the corresponding advancing front $\hat{A}^{n, (-i)}$ in the auxiliary particle system $\mathbf{\hat{X}}^{n,(-i)}=(\hat{X}^{1,n,(-i)},\ldots,\hat{X}^{n,n,(-i)})$ for which the dynamics and interactions are as in $\mathbf{X}^n$, except that $\hat{X}^{i,n,(-i)}$ is fully reflected with no effect on the other particles indexed by $j\neq i$. $\hat{C}^{n, (-i)}$ correspondingly denotes the current contagiousness in this system, defined as in \eqref{eq:currently_contagious} but with $I^n$ replaced by the infected proportion $\hat{I}^{n, (-i)}$ for the auxiliary system $\mathbf{\hat{X}}^{n,(-i)}$. Finally, the construction of $\mathbf{X}^n$ is uniquely determined by the inputs, and, in particular, $\mathbf{X}^n$ is unique in law for any inputs satisfying Assumption \ref{assump:inputs}.
\end{thm}

The proof of Theorem \ref{thm:existence} is given in Section \ref{sec:existence_sys} and the precise definition of the auxiliary systems $\mathbf{\hat{X}}^{n,(-i)}$, for $i = 1,\ldots,n$, can be found in \eqref{eq:sys_X_minus_i} of Proposition~\ref{prop:artificial_tagged_i}. The main intricacies pertain to the coupling of the infection times and the particle dynamics through the moving boundary and the effective rate of infection which are themselves determined by the infection times. In particular, the specification \eqref{eq:tau_i_prob}--\eqref{eq:reduced_filtration} is essential: it ensures there is no unintended temporal circularity in how $\tau^i$ is determined, as it should not depend on the trajectories after the infection time. Moreover, we note that the specifics of the construction are central to the proofs of the conditional elastic representation in Theorem \ref{Thm:Brownian_transformation} and the martingale property in Theorem \ref{Thm:convergence_Barnes}, which underpins the observations about the infected proportion discussed in Section \ref{subsect:SIR}.

\begin{rmk}[Dependence on initial values]
In Theorem~\ref{thm:existence} the coefficients can depend on $X_0^{i,n}$, allowing for different dynamics depending on the initial level of shielding. For example, one may wish to divide the population into subgroups according to the initial shielding and let each group mean-revert around a given level of shielding (see Section \ref{subsect:illustration_of_model}). To simplify the notation, we suppress the $X_0^{i,n}$-dependence throughout, but our arguments of course account for it.
\end{rmk}

It will be useful to decompose the reduced-information filtrations $	\hat{\mathcal{F}}^{i,n}$ from \eqref{eq:reduced_filtration} as
\[
    \hat{\mathcal{F}}^{i,n}_t = \mathcal{G}_{t}^{i,n} \lor \sigma(B^i_r,X_0^i : r\leq t),
\]
\begin{equation*}
    \mathcal{G}_{t}^{i,n}:=\sigma((X_{0}^{j},B_{r}^{j},\chi^{j,(k)}):j\neq i,\;k\leq n,\,r\leq t).
\end{equation*}
 Based on this, the next result highlights a crucial consequence of the structure of the particle system. Namely that, for any given particle $X^{i,n}$, we can, in a suitable sense, `freeze' its interactions with the other particles, by conditioning on $\mathcal{G}^{i,n}_t$, and one can then recast its conditional law in terms of a generalised elastic Brownian motion with drift in the frame of the advancing front.

\begin{thm}[Generalised elastic Brownian motion]\label{Thm:Brownian_transformation}
Fix any $i\in\{1,\ldots,n\}$ and let $\hat{A}^{n, (-i)}$ denote the advancing front of the auxiliary system $\mathbf{\hat{X}}^{n,(-i)}$. There is a random time-dependent bijection $\Upsilon^{i,n}(t,\omega , \cdot):[0,\infty) \rightarrow [0,\infty)$ and a jointly measurable function $(t,\omega,z)\mapsto \tilde{b}^{i,n}(t,\omega,z)$, both of which are adapted to $\mathcal{G}^{i,n}$, such that, almost surely, we have
\begin{equation}
    \Prob(X_{t}^{i,n}\in(a,b)\mid\mathcal{G}_{t}^{i,n})=\Prob(Z_{t}^{i,n}\in(\Upsilon^{i,n}(t,a-\hat{A}_{t}^{n, (-i)}),\Upsilon^{i,n}(t,b-\hat{A}_{t}^{n, (-i)})),\;t<\tau_{Z}^{i,n}\mid\mathcal{G}_{t}^{i,n}),\label{eq:Prob_Brownian}
\end{equation}
for any $(a,b)\subseteq [A_t^n,\infty)$ and all $t\geq0$, with 
\begin{equation}\label{eq:Z_i_n_dynamics}
    \di Z_{t}^{i,n} = \tilde{b}^{i,n}(t,\omega,Z_{t}^{i,n})\dt + \di B_{t}^{i,n} + \tfrac{1}{2}\di \ell_{t}^{0}(Z^{i,n}),\quad t\geq0,
\end{equation}
where $\ell^{0}(Z^{i,n})$ is the local time of $Z^{i,n}$ at the origin, so that $Z^{i,n}$ lives in $\R^+$, $B^{i,n}$ is a Brownian motion  independent of $\mathcal{G}^{i,n}$, and the infection time $\tau_{Z}^{i,n}$ is given by
\begin{equation}
    \tau_{Z}^{i,n}=\inf\Bigl\{ t\geq0:\int_{0}^{t} \sigma(r,\hat{A}_{r}^{n, (-i)}) \gamma(r,\hat{C}_{r}^{n, (-i)}) \di  \ell_{r}^{0} (Z^{i,n}) > \chi^{i,n} \Bigr\} \label{eq:tau_Upsilon}
\end{equation}
for a standard exponential random variable $\chi^{i,n}$ independent
of $\hat{\mathcal{F}}^{i,n}$.
\end{thm}

The proof is given in Section \ref{sect:lamperti_for_particle_sys}, where one can find the constructions of $\Upsilon^{i,n}$ and $\tilde{b}^{i,n}$. The result is an important analytical device: the tower law with conditioning on $\mathcal{G}^{i,n}$ allows one to untangle correlations between the particles and describe the infection time by \eqref{eq:tau_Upsilon} akin to a standard elastic Brownian motion. Up to having control over the drift $\tilde{b}^{i,n}$ (which is of at most linear growth by Proposition \ref{prop:Lamperti}), it also allows for Girsanov arguments. Moreover, it can aid the implementation and analysis of numerical schemes by avoiding discretisation of the multiplicative noise.

The key quantity in our epidemic model is the infected proportion $I^n$. The next results shows that its evolution may be compensated, in terms of the local times of the remaining susceptible individuals, to form a martingale. Furthermore, we make an insightful observation about the asymptotic behaviour of $I^n$ as the population tends to infinity.

\begin{thm}[Martingale property and large $n$ asymptotics]\label{Thm:convergence_Barnes}
For every $n\geq 1$, we define
\begin{equation*}
    V_t^n:=\frac{1}{n}\sum_{i=1}^{n}\int_0^{t}\mathbbm{1}_{s<\tau^i}\gamma(s,C_s^n) \di \ell^{i,n}_s,\quad t\geq0,
\end{equation*}
where we recall that $C^n_t = \int_{t-\bar{d}}^t \varrho(t-s)(I^n_s - I^n_{s-\bar{d}}\,)\ds$. Then, the difference $I^n-V^n$ is a martingale for the subfiltration $\mathcal{\bar{{F}}}_{t}^{n}$ of $\mathcal{{F}}_{t}^{n}$ given by
\[
\mathcal{\bar{{F}}}_{t}^{n}=\sigma(X_{0}^{j},B_{s}^{j},\{s<\tau^{j}\}:s\in[0,t],\,j\in\{1,\ldots,n\}).
\]
Furthermore, as $n\rightarrow \infty$, $\sup_{s\leq t}|I^n_s-V^n_s|$ vanishes in $L^2(\Omega, \Prob)$ at the rate $O(1/\sqrt{n})$, for every $t>0$. In particular, $I^n-V^n$ vanishes uniformly on compacts in probability as $n\rightarrow \infty$. 
\end{thm}

The proof is given in Section \ref{sect:martingale_prop}, where the martingality of $I^n-V^n$ is derived as a consequence of the infection mechanism \eqref{eq:tau_i_prob}. This result justifies the decompositions \eqref{eq:our_model_new_infections} and \eqref{eq:our_eff_reptroduction_number} in our comparison with the number of new infections and the effective reproduction number in the SIR model. Moreover, the asymptotic part of the result says that, for large enough $n$, these decompositions hold as almost sure approximations: on any compact time interval, we have
\[
    I^n_{t+h}-	I^n_{t} \approx \frac{1}{n}\sum_{i=1}^{n}\int_t^{t+h}\mathbbm{1}_{s<\tau^i}\gamma(s,C_s^n) \di \ell^{i,n}_s
\]
for large $n\geq 1$. This asymptotic observation is explored further in our companion paper \cite{Fausti-Soj_26}, where we identify the mean-field limit of the front and the empirical measure flow associated to the susceptible particles. Both Theorems \ref{Thm:Brownian_transformation} and \ref{Thm:convergence_Barnes} play a critical role in this analysis.

\subsection{Illustration of the epidemic model}\label{subsect:illustration_of_model}

Our framework can be used to simulate possible evolutions of a spreading epidemic and examine their likelihood. Moreover, such simulations can allow one to analyse the effect of various interventions, and they can provide an estimate of the expected evolution.

The output of the model will be a function of the following four inputs, which one will need to either estimate or take an informed view on:
\begin{enumerate}[i.]
    \item the population heterogeneity as characterised by the current distribution of the individual levels of shielding (i.e., the initial conditions $X_0^{i,n}$, or the current positions if restarting the system at some later time),
    \item the rate at which the front advances per new infection (i.e.,~the proportionality constant $\alpha$ and the infection-to-recovery kernel $\varrho$),
    \item the population dynamics (i.e., the drift and diffusion coefficients $b$ and $\sigma$), and
    \item the disease's transmissibility as a function of the current index of contagiousness (i.e., the effective rate of infection $\gamma$ as a function of $C^n$). 
\end{enumerate}

To illustrate the epidemic model, we will show a few examples of how the particle system may evolve for certain choices of the aforementioned inputs (chosen primarily for illustrative purposes, but also with a view to practically relevant values). In Figures~\ref{fig:10_part} and \ref{fig:20_part_gamma_effect}, particles diffuse on $[A^n_t, \infty)$ for $t \in [0,T]=[0, 100]$, with no drift ($b = 0$) and constant diffusion coefficient ($\sigma = 0.25$).  They are initially distributed according to a mixture of two (in Figure~\ref{fig:10_part}) and three (in Figure~\ref{fig:20_part_gamma_effect}) disjoint uniform densities, representing the individuals' initial levels of shielding $X_0^{i,n}$ (representing two or three initially disjoint, but interacting, groups). The moving boundary $A^n_t$ begins at $A^n_0 = a_0 = 0$, and we have taken the kernel $\varrho$ to be given by a Weibull density supported on $[0,14]$. We model the effective rate of infection $\gamma$ as $\gamma(t, C^n_t) = \gamma_0 + k_1 \tanh(k_2 C^n_t)$. Throughout, the parameter $\alpha$ is simply taken so that infection of the whole population is theoretically possible (that is, we consider the particle that reaches the highest level of shielding in the system, and set $\alpha$ so that $a_0 + \alpha$ exceeds this level).

\begin{figure}[h]
	\centering
	\includegraphics[width=0.33\linewidth]{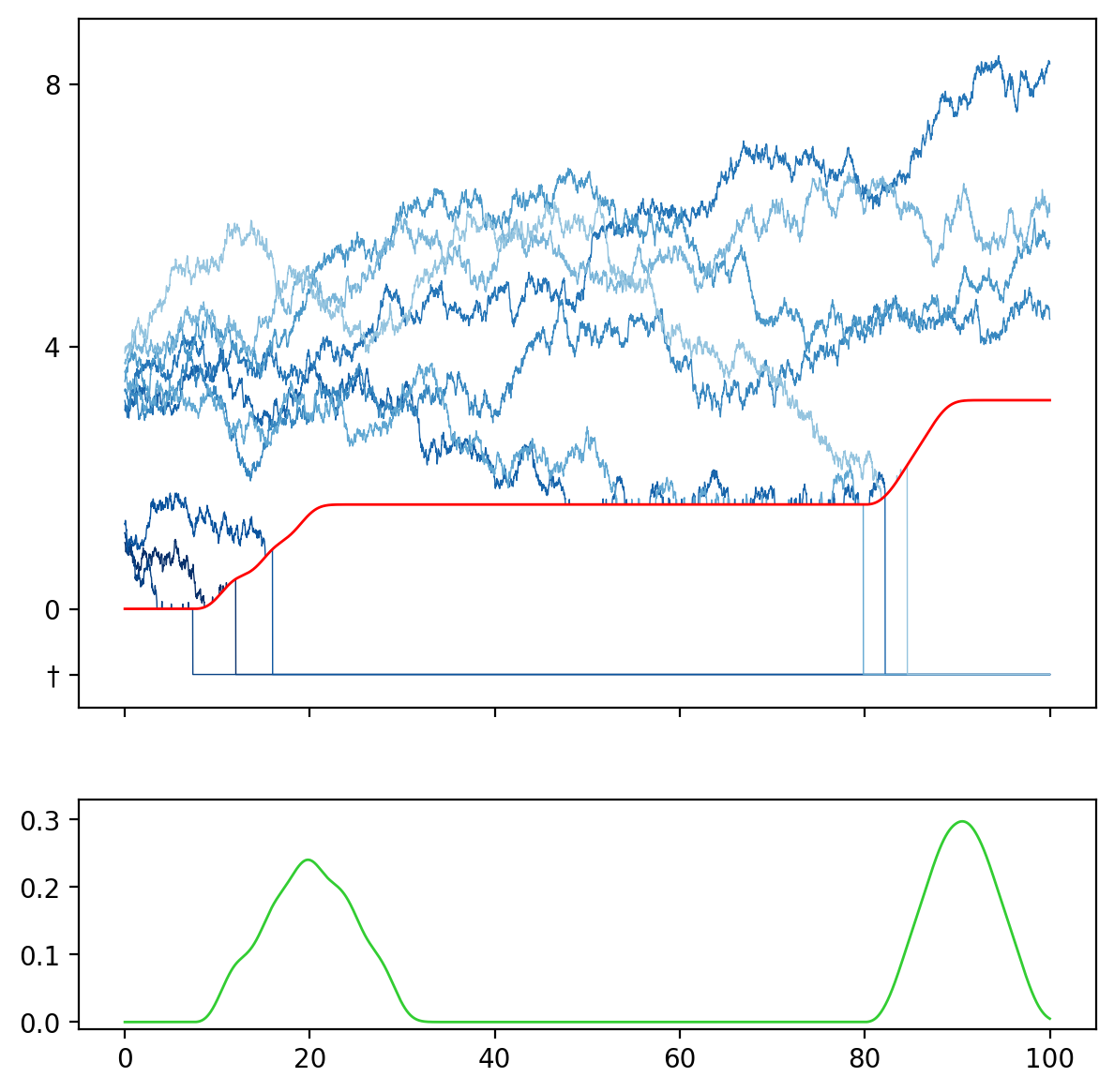}
    \includegraphics[width=0.315\linewidth]{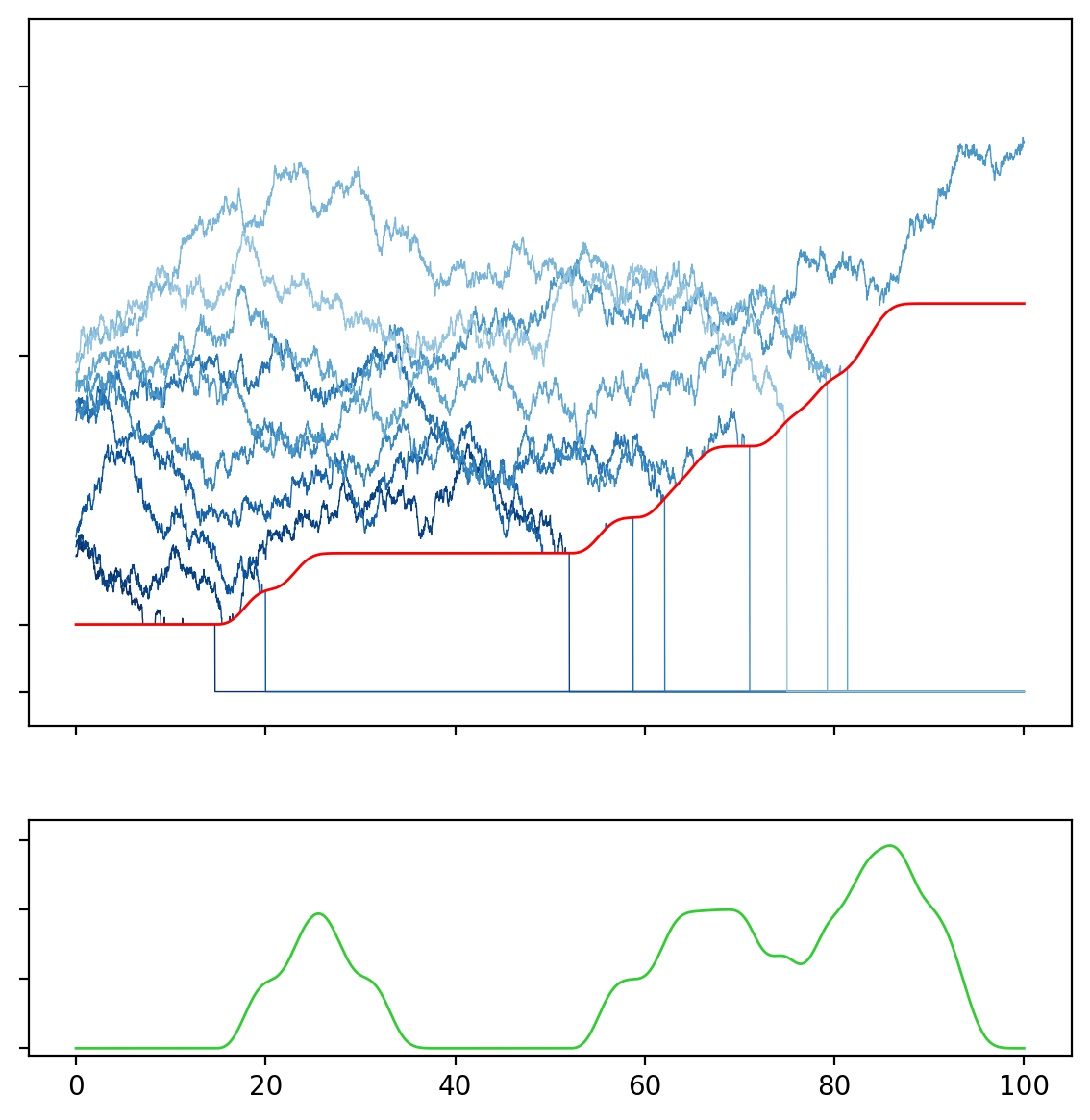}
	\includegraphics[width=0.315\linewidth]{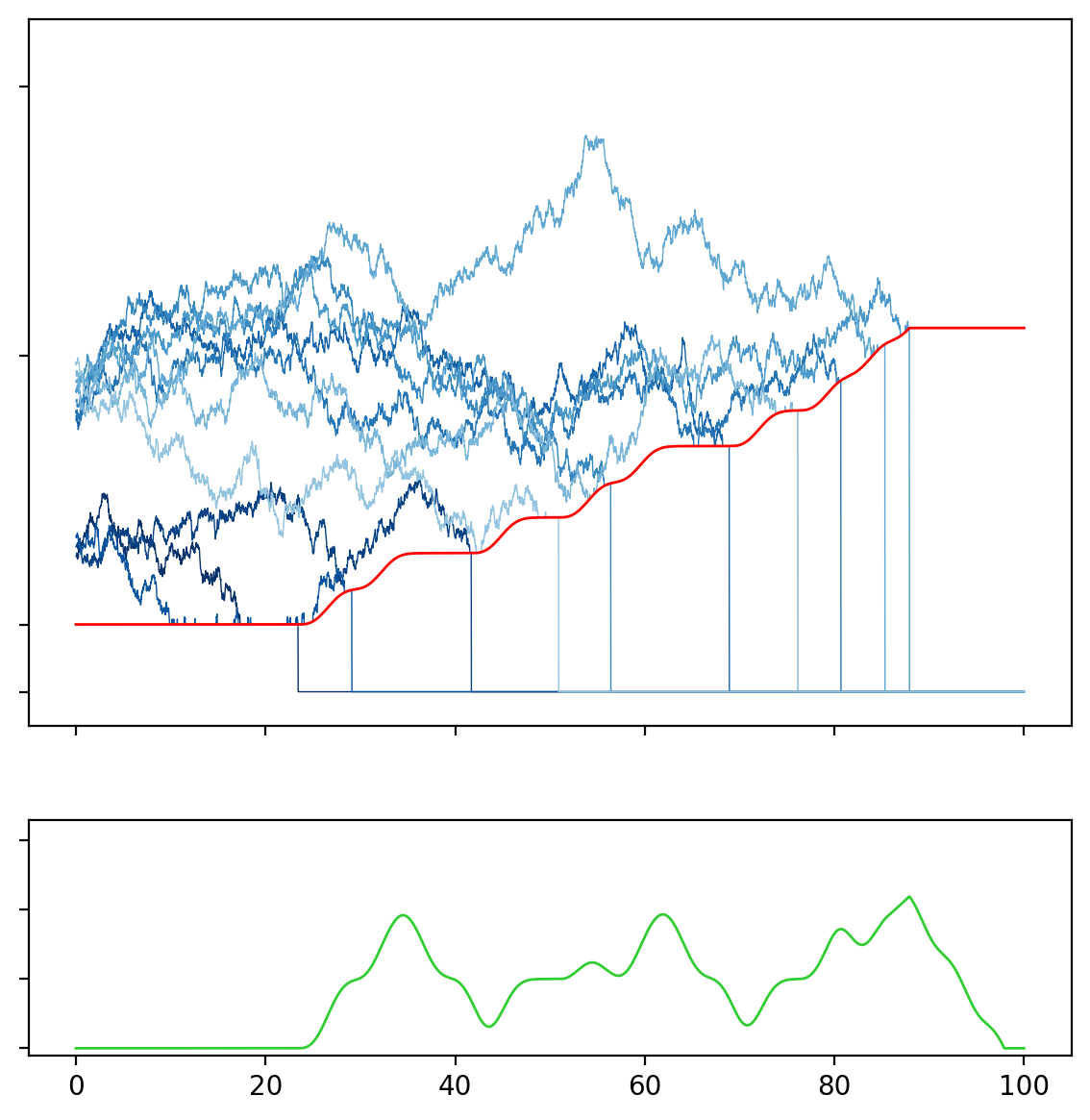}
	\caption{Dynamics of the levels of shielding in a population of 10 individuals, initially distributed according to a mixture of two disjoint uniform densities, representing low and high initial shielding. The vertical axis is the level shielding. The horizontal axis is time. The red curve shows the advancing front of the epidemic $A^n$. The green curve shows the current index of contagiousness $C^n$. Across the three plots, parameters are fixed while realisations of the $X^i$'s change.}
	\label{fig:10_part}
\end{figure}

\begin{figure}[h]
	\centering
	\includegraphics[width=0.33\linewidth]{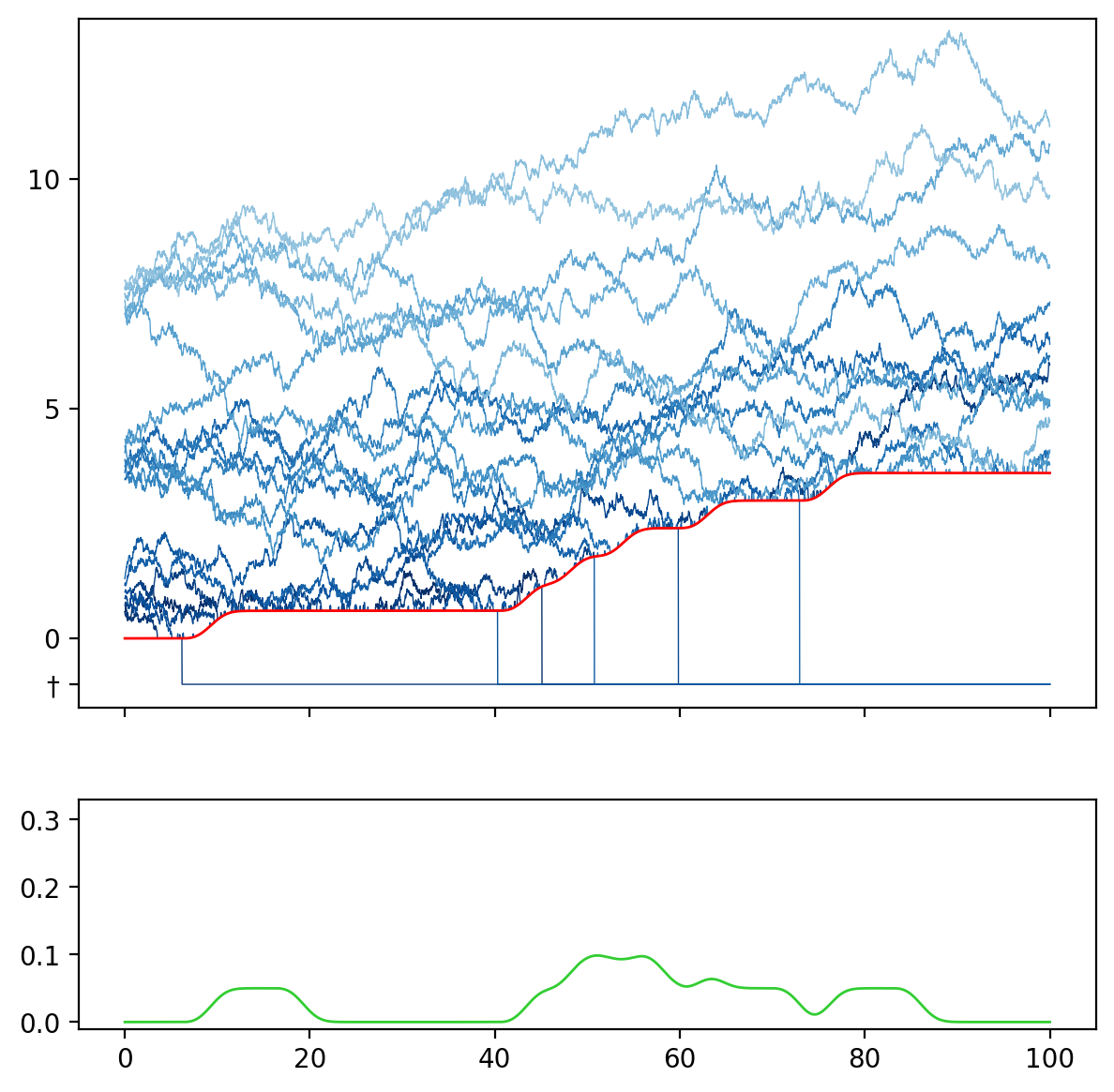}
    \includegraphics[width=0.315\linewidth]{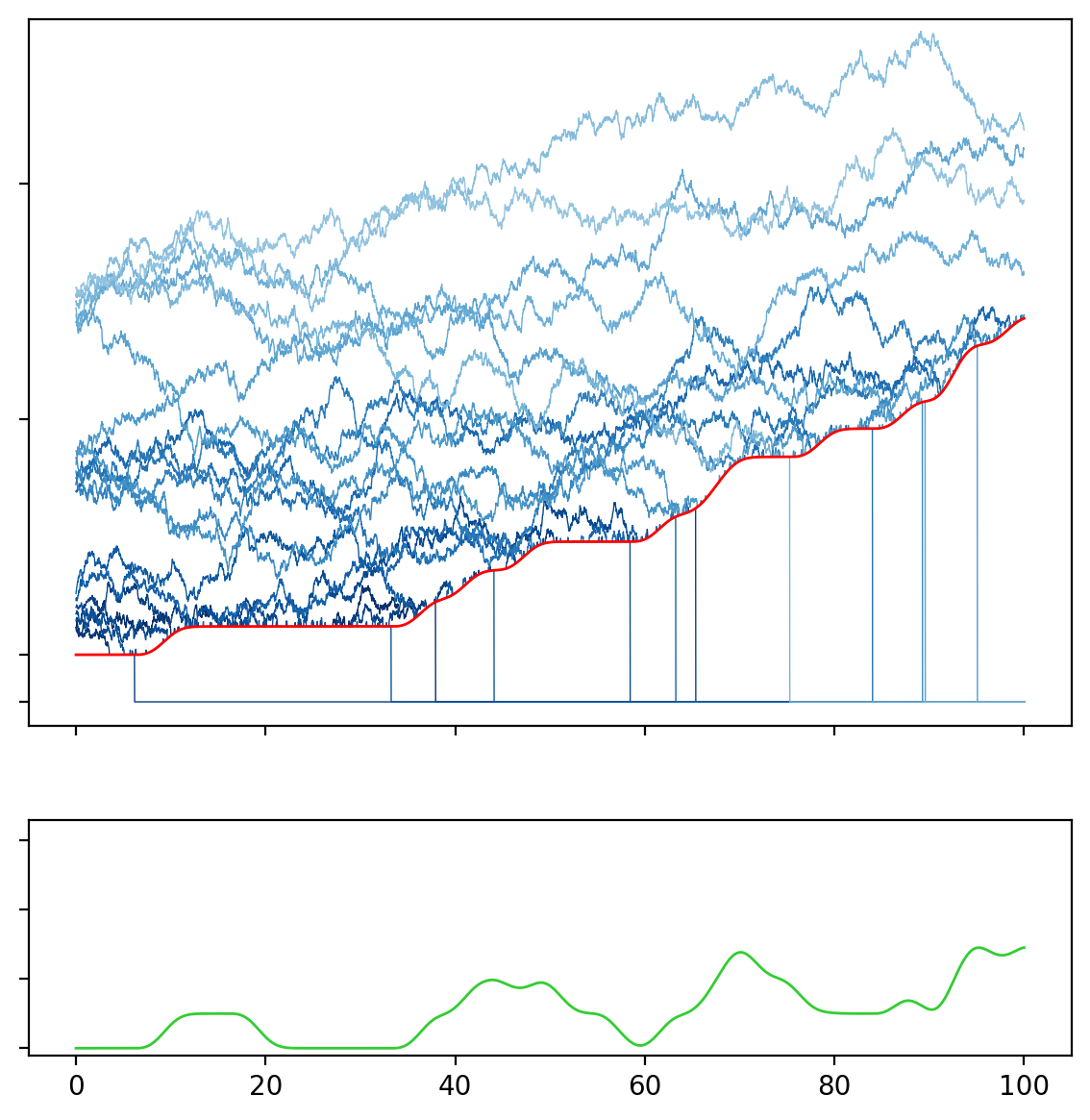}
	\includegraphics[width=0.315\linewidth]{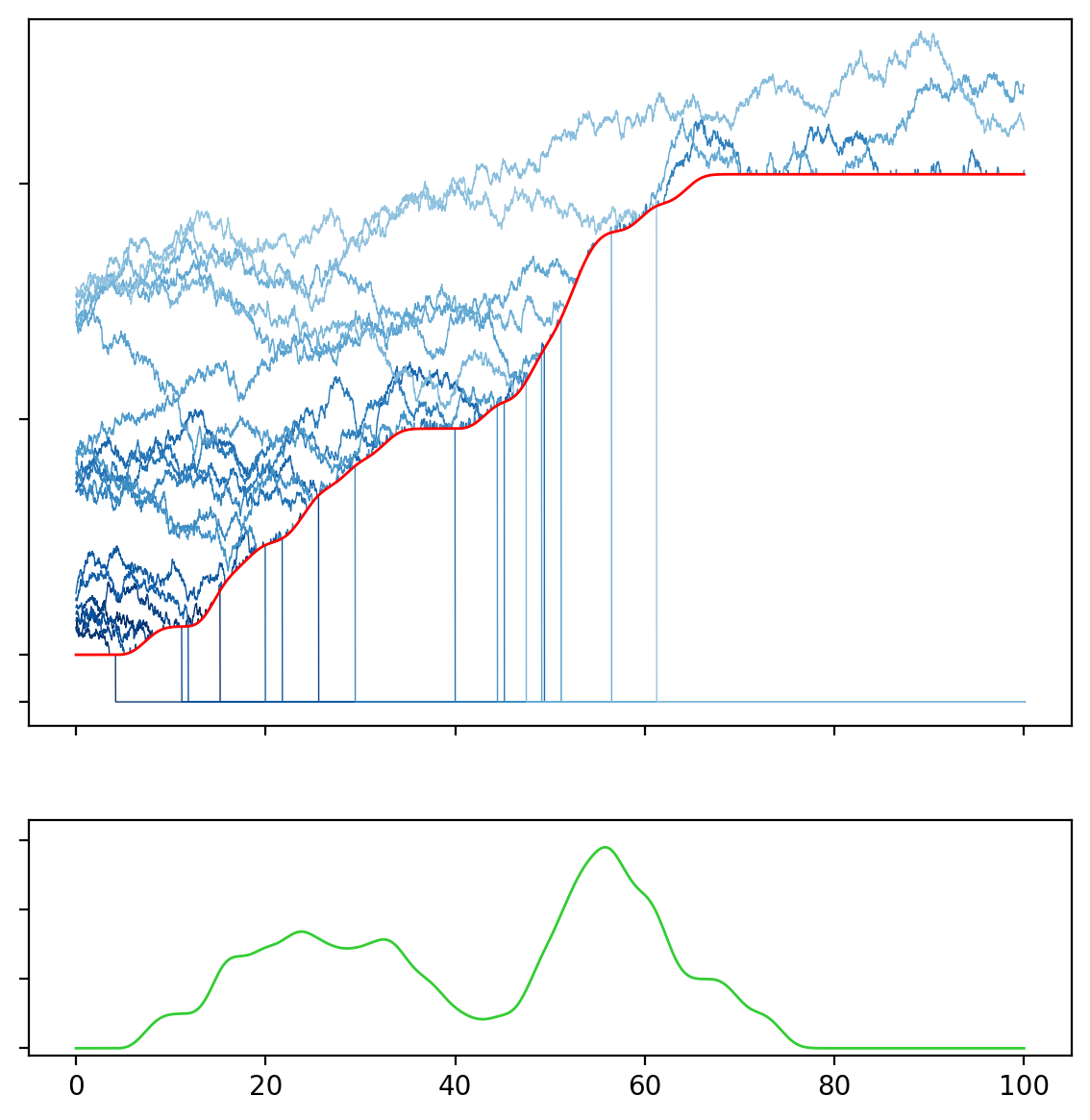}
	\caption{Dynamics of the levels of shielding in a population of 20 individuals, initially distributed according to a mixture of three disjoint uniform densities, representing low, medium and high initial shielding. Across the three plots, realisations of the $X^i$'s are fixed, while the effective rate of infection $\gamma$ is taken to be progressively more aggressive.}
	\label{fig:20_part_gamma_effect}
\end{figure}

Figure~\ref{fig:10_part} illustrates the behaviour of a system with only 10 individuals: across the three plots, all parameters are fixed, with only the realized particles' paths varying. In the first plot, the particles with the highest initial shielding level tend to diffuse away from the moving boundary (in red in the plots), and they are still safe from infection by the time the simulation ends. In contrast, the second and third plots exhibit a downward trend in the particle diffusions, leading to the infection of most, if not all, particles within the simulated time frame. Below each graph of the particle system, we plot $C^n$, the current index of contagiousness, which, recalling \eqref{eq:currently_contagious}, is concerned with the number of new infections within the previous 14 units of time (the support of $\varrho$).

In Figure~\ref{fig:20_part_gamma_effect} we plot the dynamics of the shielding levels of a population of 20 individuals. As opposed to Figure~\ref{fig:10_part}, for these simulations we fix the particle realizations and vary the parameters across the three plots: in particular, we vary $\gamma_0, k_1$ and $k_2$ and make the effective rate of infection $\gamma$ progressively more aggressive. We can observe the effect that this has on the system directly: a relatively mild $\gamma$ (as in the first plot) leads to intervals of time (for $t \in [10,40]$ and when $t\ge 70$) in which there are no new infections in the system, despite many \emph{at-risk} particles accumulating local time as they are being reflected off the boundary. A slight increase in $\gamma$ results in elevated infection frequencies starting around $t=40$ (see second plot in Figure~\ref{fig:20_part_gamma_effect}). A further increase in $\gamma$ in the third graph sees most of the population being infected by $t=70$, although some individuals still remain ``safe'' until the end of the simulation, despite having been at-risk, or at least close to the advancing epidemic front, for a stretch of time by then.

Once the general behaviour of the system is understood, one can start considering how to introduce and model external interventions. For example, with reference to the recent COVID-19 pandemic, it could be interesting to model governmental measures such as the introduction of protective equipment, immunization through vaccination, social distancing and even lock-downs. These should be reflected in the parameters, with $b$ and $\sigma$ made to vary based on the changing behaviour of the individuals, and $\gamma$ encoding the effect of these measures on the effective contagiousness of the disease. Naturally, one could also consider abrupt interventions whereby one changes the current particle positions and simulates from new initial conditions, but this is somewhat less interesting than considering gradual interventions in the model parameters in ways that respects the assumptions for the system's well-posedness.

In Figure~\ref{fig:200_part_mean_reversion}, we demonstrate a simple approach to modelling intervention in the particle dynamics. We introduce mean reversion by adding a drift term $b(t, X^{i,n}_t, X^{i,n}_0) = \theta(b_1(t, X^{i,n}_0) - X^{i,n}_t)$ to the dynamics of each particle $X^i$, where $\theta = 0.075$ controls the mean-reversion strength, and the target level $b_1(t, X^{i,n}_0)$ varies with time and depends on each particle's initial position. One should think of $b_1(t, x)$ as the level of shielding that a particle with initial shielding $x$ will tend to gravitate around at time $t$ (either for internal reasons or because of external forces seeking to achieve this). For concreteness, consider an intervention in $b_1$ implemented through a time-dependent target level of the form
$$
b_1(t, X^{i,n}_0) =
\begin{cases}
    X_0^{i,n},  & t \le t_0, \\
    X_0^{i,n} \frac{t_1 - t}{t_1 - t_0} + (X_0^{i,n} + 2) \frac{t - t_0}{t_1 - t_0}, & t \in (t_0, t_1), \\
    X_0^{i,n} + 2,  & t \ge t_1.
\end{cases}
$$
This formulation captures three distinct phases: (i) for $t \le t_0$, particles revert toward their initial positions $X_0^{i,n}$; (ii) during the intervention period $(t_0, t_1)$, the target level shifts linearly from $X_0^{i,n}$ to $X_0^{i,n} + 2$, effectively pushing particles away from the contagious front; and (iii) for $t \ge t_1$, particles maintain mean reversion toward the elevated level $X_0^{i,n} + 2$. Note that this framework falls within our theoretical analysis, as the drift is consistent with Assumption~\ref{assump:coefficient_assumptions}.

In the three plots of Figure~\ref{fig:200_part_mean_reversion} we show the evolution of a system of 200 particles. In the first plots the mean reversion level does not increase ($t_0 = \infty)$ and the particles keep reverting around their initial level: this results in two steep waves of infection. In the second and third plot, intervention happens, respectively, at time $t_0 = 50$ and $t_0 = 40$, with $t_1 = t_0 + 10$. In the second plot, the second wave of infections still happens, but it is less severe than in the first plot. In the third plot, due to earlier intervention, the second wave is almost completely avoided.

\begin{figure}[ht]
	\centering
	\includegraphics[width=0.33\linewidth]{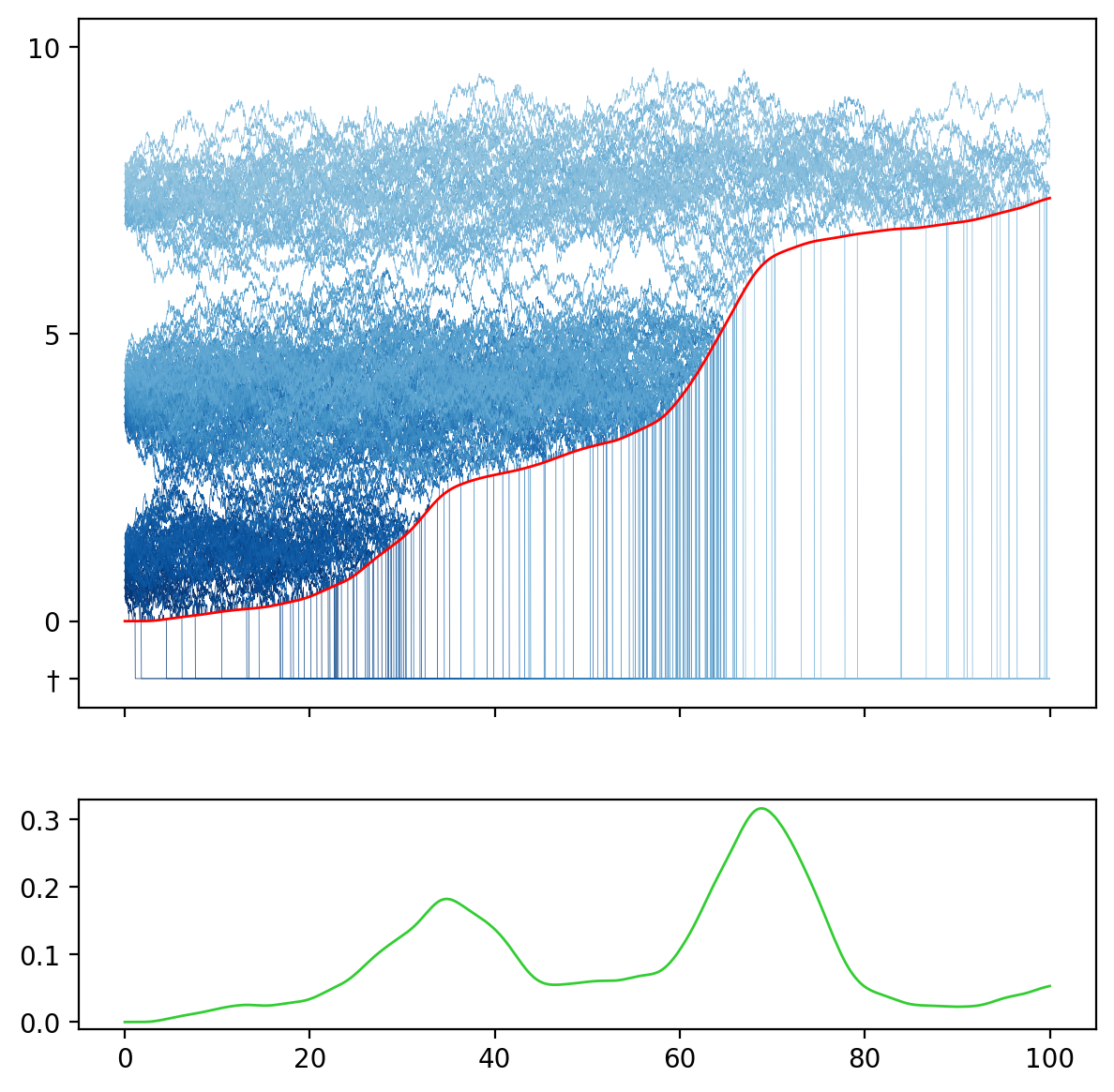}
    \includegraphics[width=0.315\linewidth]{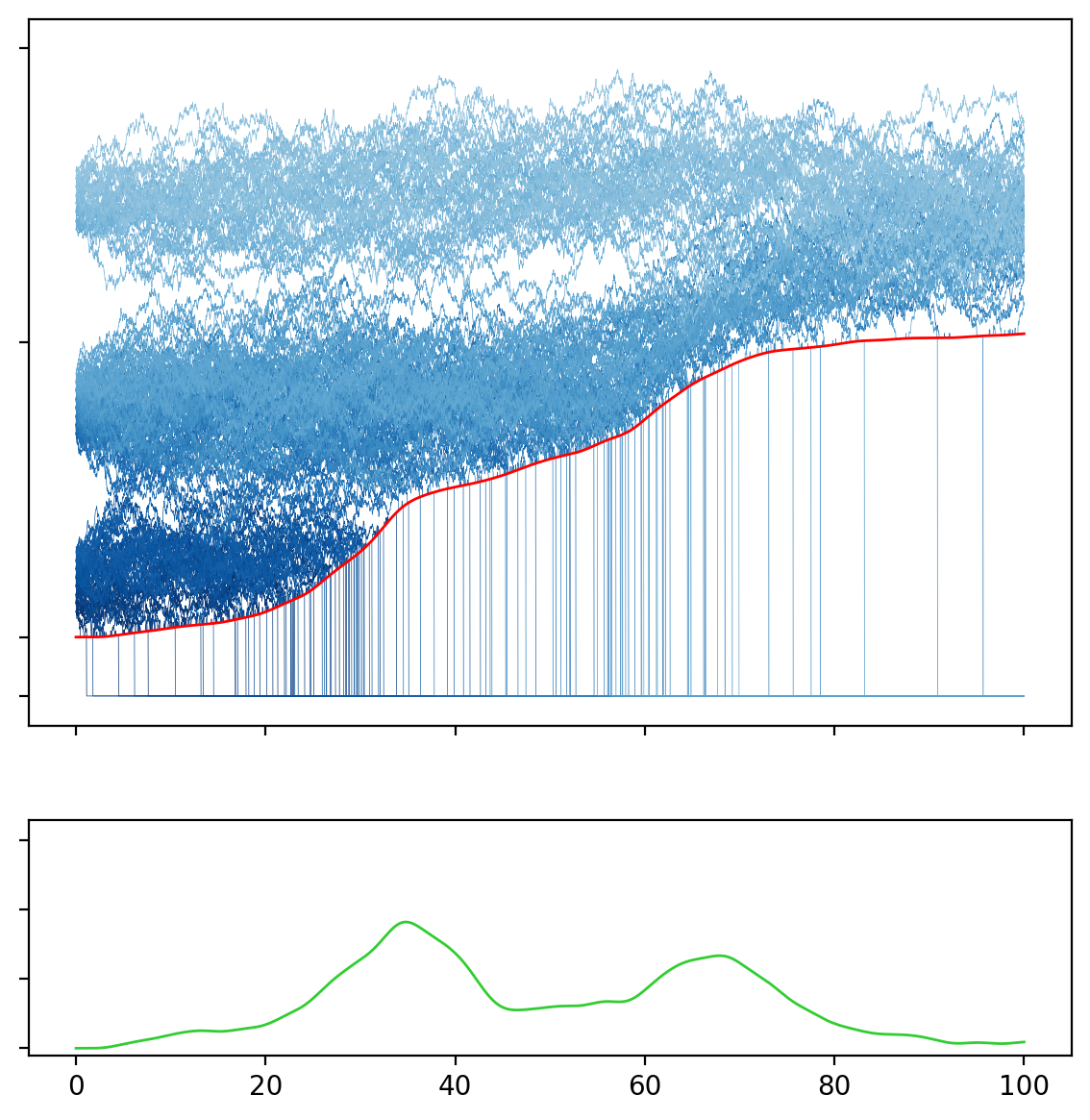}
	\includegraphics[width=0.315\linewidth]{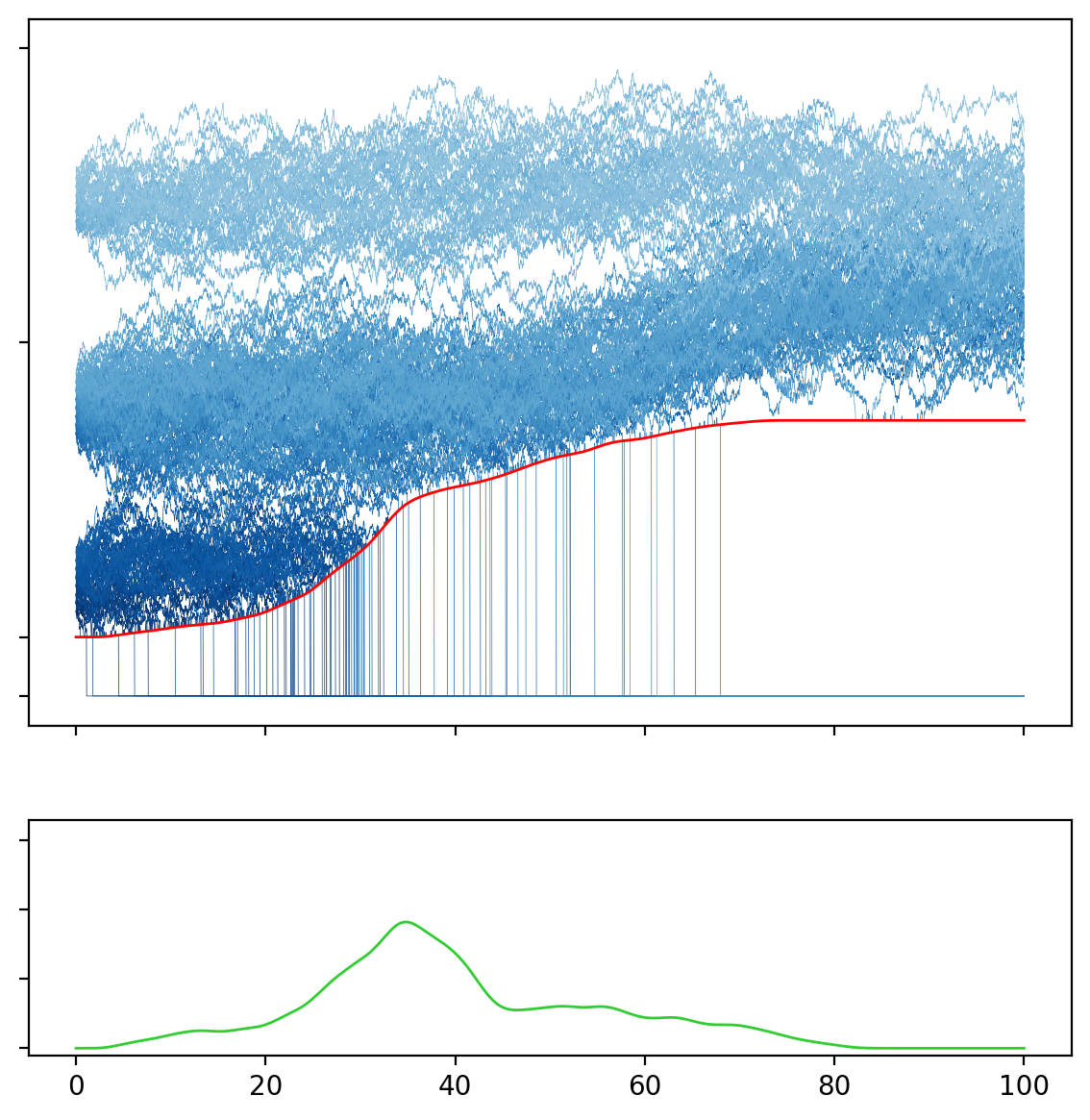}
	\caption{Dynamics of the levels of shielding in a population of 200 individuals, initially distributed according to a piecewise function with three blocks, representing low, medium and high shielding. Each of the $X^i$'s is mean reverting around its initial level $X^i_0$. In the first picture we see a first wave of infection followed by a steep second wave. In the second and third plot, at time 50 and 40 respectively, we adjust the drift dynamics of the individuals in the middle block, gradually increasing their level of mean reversion by 2. The earlier intervention significantly reduces infections.}
	\label{fig:200_part_mean_reversion}
\end{figure}

We conclude this section by discussing some interesting directions for future work in relation to the epidemic modelling. Firstly, as it is presented here, our model is intended for the short or medium term, meaning a period for which it is acceptable to perform predictions without worrying about re-infection of previously infected individuals. If one is interested in a longer term model, then this could be addressed by allowing for re-insertion of infected individuals at some later point in time when their immunity has waned. Secondly, as discussed above, it could be interesting to study interventions within the system in a disciplined way. One may simply take a simulations based approach to this, similarly to what we did in Figure~\ref{fig:200_part_mean_reversion}. Alternatively, if one wants to identify optimal strategies given certain targets and the corresponding costs, it may be possible to analyse this via the formulation of suitable stochastic control problems based on our framework. Finally, in practical applications concerned with a human epidemic, one would also need to consider outcomes such as death or hospitalisation, but here we take the point of view that the number of newly infected individuals is ultimately the key quantity to track, and one can then have a separate rule (or model) for the extent to which this leads to hospitalisations or deaths.

\subsection{Brownian particles impinging on an inert object}\label{sec:Barnes}

As discussed in the introduction, the moving boundary problems for reflected Brownian motion studied in \cite{Knight} and \cite{barnes} turn out to be closely related to our framework in a way that we now describe. To this end, consider the simplified front $A^n_t:= \int_0^t I_s^n \ds$ and replace the current contagiousness by $C^n_t:= I^n_t$. Set also $b\equiv 0$ and $\sigma \equiv 1$. Our arguments continue to apply in this setting. The second part of Theorem \ref{Thm:convergence_Barnes} then suggests that, as $n$ becomes large, the particle system will behave in approximately the same way as the following system
\begin{equation}\label{eq:sys_X_simplified}
    \left\{ \begin{array}{@{}l@{}l}
        \di  Y_t^{i,n} = \di B_t^{i,n} + \tfrac{1}{2} \di \ell_t^{i,n},\;\; t < \tau^i,\quad Y_t^{i,n}=\dagger, & \quad t \geq \tau^i, \vspace{6pt}\\
        \displaystyle \tfrac{\di }{\dt}S_t^{n} = \frac{1 }{n}\sum_{i=1}^n\int_0^t
        \mathbbm{1}_{s<\tau^i}\gamma\bigl(s, \tfrac{\di }{\ds}S_s^{n}\bigr)\di \ell_s^{i,n}, & \quad t\geq 0, 
    \end{array} \right.
\end{equation}
where $\ell^{i,n}$ is the local time of $Y^{i,n}$ along $S^n$ (with $S^n_0=0$), and where $\tau^i=\inf\{t>0:Y_t^{i,n}=\dagger\}$ is characterised as in \eqref{eq:tau_i_prob} but for the system \eqref{eq:sys_X_simplified}, informally meaning that we have
\begin{equation}\label{eq:elastic_barnes}
    \Prob\bigl(\tau^i \in(t,t+h] \mid Y^{i,n}, S^n,\, \{t<\tau^i\} \bigr) \approx \int_t^{t+h}
    \gamma\bigl(s, \tfrac{\di }{\ds}S_s^{n}\bigr)\di \ell_s^{i,n}
\end{equation}
as $h\searrow 0$. That is, the probability of absorption for the $i$'th particle is given by its contribution to the velocity of the moving boundary $S^n$.

The above should be contrasted with the particle system studied in \cite{barnes} which takes the form
\begin{equation}\label{eq:sys_X_Barnes}
    \left\{ \begin{array}{@{}l@{}l}
        \di  X_t^{i,n} = \di B_t^{i,n} + \tfrac{1}{2} \di \ell_t^{i,n}, & \quad t \geq0, \vspace{6pt}\\
        \displaystyle \tfrac{\di }{\dt}S_t^{n} = - \frac{\gamma }{n}\sum_{i=1}^n \hspace{1pt} \ell^{i,n}_t , & \quad t\geq 0, 
    \end{array} \right.
\end{equation}
for a given constant $\gamma>0$, where $\ell^{i,n}$ is the local time of $X^{i,n}$ along $S^n$ with $X^{i,n}_t\geq S^n_t$. The boundary $S^n$ describes the path of an inert object that obeys Newton's law of motion. It cannot be crossed by the Brownian particles, but is free to move, and, whenever a Brownian particle collides with it, there is a total transfer of momentum proportional to the local time $\ell^{i,n}$.

As discussed in \cite{Knight}, $S^n$ in \eqref{eq:sys_X_Barnes} can be thought of as an ideal mobile heat insulator if the $X^{i,n}$ are seen as `heat particles'. The setting of \eqref{eq:sys_X_simplified} is analogous to \eqref{eq:sys_X_Barnes} except that $S^n$ is now an \emph{imperfect} mobile heat insulator, where the heat loss is governed by \eqref{eq:elastic_barnes}. Moreover, the coefficient $\gamma$, which is constant in \eqref{eq:sys_X_Barnes}, may now depend on time and the velocity of $S^n$, and we stress that it determines both the transfer of momentum to $S^n$ and the heat loss at the boundary.

\section{Well-posedness of the particle system}\label{sec:existence_sys}

In this section, we construct the system $\mathbf{X}^n$ satisfying the formulation in Theorem \ref{thm:existence}. The main technical difficulties lie in the coupling of the infection times through the current index of contagiousness and the local time of each particle's collisions with the front. In this regard, the first key realisation is that \eqref{eq:tau_i_prob} is the correct specification of the conditional laws of the infection times to have the desired behaviour and avoid ill-posed circularities. From there, a piece-by-piece construction of the particle trajectories that goes via the construction of the auxiliary systems $\mathbf{\hat{X}}^{n,(-i)}$ breaks down the coupling of the infection times and gives the desired conditional laws.

The core idea is to partition $[0,\infty)$ into $n+1$ random time intervals, separated by the sequential infection times of the population. Within each interval, the boundary's movement and the rate of infection are fully determined by the history of the system up to that point. Through this iterative concatenation, we first establish a foundational \textit{globally reflected system} $\mathbf{\hat{X}}^n$, wherein all particles continue to reflect indefinitely after infection with the following crucial property: each particle has its own moving boundary that is advanced by the infections of the rest of the population, but remains blind to the particle's own infection. This system provides a pathwise basis for the reflected dynamics in our subsequent construction of the particle system.

By moving particles to the cemetery state at their respective infection times, we obtain the global trajectories of the particles in the \textit{true system} $\mathbf{X}^n$. To verify the conditional law \eqref{eq:tau_i_prob} of the infection times, we must isolate the dynamics of a `tagged' particle from the boundary advance caused by its own potential infection. To achieve this, for each $i \in \{1,\dots,n\}$, we supplement the construction of $\mathbf{X}^n$ with the \textit{auxiliary system} $\mathbf{\hat{X}}^{n, (-i)}$. In this system, the $i^{\text{th}}$ particle is effectively `immune': it continues to reflect off the moving boundary globally in time and exerts no influence on the boundary dynamics or the infection times of the other particles.

The recursive construction of these intermediate components, and their piecewise concatenation, becomes a little technical and the notation is quite heavy, so we defer it to Appendix \ref{appendix:particle_system}. In what follows, we present the central definitions and results that give us Theorem \ref{thm:existence}.

\subsection{Proof of Theorem \ref{thm:existence}}\label{sect:well_posed_particle_system}

We first confirm the well-posedness of the globally reflected system $\mathbf{\hat{X}}^n$, as discussed above.

\begin{prop}[Globally reflected trajectories]\label{prop:evol_x_hat}
The processes $\mathbf{\hat X}_t^n = (\hat X_t^{1,n}, \dots,\hat X_t^{n,n}) \in [\mathbf{A}^n_t, \boldsymbol{\infty})$ and $\mathbf{A}^n_t = (A_t^{1,n}, \dots, A_t^{n,n}) \in \R_{\ge 0}^n $ constructed according to \eqref{eq:general_P_defn_by_parts} in Definition \ref{defn:piecewise_process} uniquely satisfy the dynamics
\begin{equation}\label{eq:sys_X_hat}
    \left\{ \begin{array}{@{}l@{}l}
    \di \hat{X}_t^{i,n} = b(t, \hat{X}_t^{i,n}) \dt + \sigma (t, \hat{X}_t^{i,n}) \di B^i_t + \tfrac{1}{2} \di \ell_t^{A^{i,n}}(\hat{X}^{i,n}), & \quad t \ge 0, \, i = 1, \dots, n, \\ \vspace{2pt}
    A_t^{i,n} = a_0 + \alpha\int_{0}^t \varrho(t-s) I^{i,n}_{s} \ds, & \quad t \ge 0,  \\ \vspace{2pt}
    I_t^{i,n} = \frac{1}{n} \sum_{j=1}^{n-1} \mathbbm{1}_{[0,t]}(\xi^i_j), & \quad t \ge 0,
    \end{array} \right.
\end{equation}
for $\{\xi_j^i\}_{j=1}^{n-1}$ given by \eqref{eq:xi_i_true}, where $\ell^{A^{i,n}}(\hat{X}^{i,n})$ is the local time of $\hat{X}^{i,n}$ along $A^{i,n}$ in accordance with the definition \eqref{eq:local_time_defn}.
\end{prop}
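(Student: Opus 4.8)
\noindent\emph{Proof strategy.} The plan is to verify each of the three equations in \eqref{eq:sys_X_hat} by working on one random interval $[\varsigma^{(k-1)},\varsigma^{(k)})$ at a time (for $k=1,\dots,n$, together with the terminal interval $[\varsigma^{(n)},\infty)$) and then patching across the junction times $\varsigma^{(k)}$. On the $k$-th interval, Definition \ref{defn:piecewise_process} gives $\hat X^{i,n}_t=\hat X^{i,n,(k)}_{t-\varsigma^{(k-1)}}$ and $A^{i,n}_t=A^{i,n,(k)}_{t-\varsigma^{(k-1)}}$, and these solve \eqref{eq:sys_X_hat_k}. Rewriting \eqref{eq:sys_X_hat_k} back in the original time coordinate, using $b^{(k)},\sigma^{(k)}$ from \eqref{eq:sigma_k} and the fact that the increments of $B^{i,(k)}$ over $[0,t-\varsigma^{(k-1)}]$ are exactly the increments of $B^i$ over $[\varsigma^{(k-1)},t]$ by \eqref{eq:BM_k}, shows that $(\hat X^{i,n},A^{i,n})$ obeys the first line of \eqref{eq:sys_X_hat} on this interval, with the reflecting term being the local time accumulated over $[\varsigma^{(k-1)},t]$. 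Evaluating the boundary formula in \eqref{eq:sys_X_hat_k} at local time $0$ makes all the integral terms vanish, so $A^{i,n,(k)}_0=A^{i,n,(k-1)}_{\tilde\varsigma^{(k-1)}}$, and similarly $\hat X^{i,n,(k)}_0=\hat X^{i,n,(k-1)}_{\tilde\varsigma^{(k-1)}}$; hence the concatenated processes $A^{i,n}$ and $\hat X^{i,n}$ are genuinely continuous across each $\varsigma^{(k)}$.

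Second, I would assemble the global equation from the pieces. The concatenated $\hat X^{i,n}$ is a continuous semimartingale with $[\hat X^{i,n}]_t=\int_0^t\sigma(s,\hat X^{i,n}_s)^2\ds$, so the local time $\ell^{A^{i,n}}_t(\hat X^{i,n})$ of \eqref{eq:local_time_defn} is well defined; splitting the integral $\int_0^t$ there at the successive $\varsigma^{(k)}$ and using that on $[\varsigma^{(k-1)},\varsigma^{(k)})$ the integrand depends only on $\hat X^{i,n}-A^{i,n}$ over that window, the $\varepsilon\downarrow 0$ limit decomposes piecewise (cf.\ \cite{burdzy_nualart}), with no local time accruing at an individual junction point. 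Thus $\ell^{A^{i,n}}(\hat X^{i,n})$ is precisely the concatenation of the shifted local times appearing in \eqref{eq:sys_X_hat_k}, and summing the piecewise identities over $k$ yields the first line of \eqref{eq:sys_X_hat} for all $t\ge 0$; the constraint $\hat X^{i,n}_t\ge A^{i,n}_t$ passes over from each piece.

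Third — and this is where the bookkeeping concentrates — I would prove by induction on $k$ that, for $t\in[\varsigma^{(k-1)},\varsigma^{(k)})$, one has $A^{i,n}_t = a_0 + \alpha\int_0^t\varrho(t-s)\,I^{i,n}_s\ds$ with $I^{i,n}$ given by the third line of \eqref{eq:sys_X_hat} through \eqref{eq:xi_i_true}. The base case is immediate as $A^{i,n}\equiv a_0$ and $I^{i,n}\equiv 0$ on $[0,\varsigma^{(1)})$. For the inductive step one takes the applicable case of the boundary recursion in \eqref{eq:sys_X_hat_k}, applies to each convolution integral the substitution that realigns its kernel with $s\mapsto\varrho(t-s)$ in the original time variable (shifting the integration variable by $\varsigma^{(k-1)}-\varsigma^{(j)}$), and combines the `missing head' $\int_0^{\varsigma^{(k-1)}-\varsigma^{(j)}}\varrho$ thereby produced with $A^{i,n,(k-1)}_{\tilde\varsigma^{(k-1)}}$, which by the induction hypothesis equals $a_0$ plus $\tfrac{\alpha}{n}$ times the sum of exactly those heads over the relevant index set. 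This telescopes to $a_0+\tfrac{\alpha}{n}\sum_{j}\int_{\varsigma^{(j)}}^{t}\varrho(t-s)\ds$, where which indices $j$ appear is dictated precisely by the three cases of \eqref{eq:sys_X_hat_k} — i.e.\ by whether particle $i$'s own infection time $\tau^i$ is among $\varsigma^{(1)},\dots,\varsigma^{(k-1)}$ — so that, by \eqref{eq:xi_i_true}, the index set is exactly $\{\,j:\xi^i_j\le t\,\}$. Since $n I^{i,n}_s=\#\{\,j:\xi^i_j\le s\,\}$, Fubini's theorem rewrites $\alpha\int_0^t\varrho(t-s)I^{i,n}_s\ds$ as $\tfrac{\alpha}{n}\sum_{j:\xi^i_j\le t}\int_{\xi^i_j}^t\varrho(t-s)\ds$, matching the telescoped expression and closing the induction.

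The step I expect to be the main obstacle is purely organisational: in the induction one must track, through the three cases of \eqref{eq:sys_X_hat_k}, exactly which past infections contribute to $A^{i,n}$, and in particular that particle $i$'s own infection is consistently excluded — this is the content of the bespoke times $\xi^i_j$ in \eqref{eq:xi_i_true}. The only genuinely analytic ingredient, the patching of the reflecting local time across junctions, is then routine once one notes that the functional in \eqref{eq:local_time_defn} is additive in time with increments determined by the path of $\hat X^{i,n}-A^{i,n}$ on the interval in question.
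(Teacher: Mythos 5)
Your proposal is correct and follows essentially the same route as the paper's proof: work on one random time interval $[\varsigma^{(k-1)},\varsigma^{(k)})$ at a time, change variables $s\mapsto s+\varsigma^{(k-1)}$ to translate $b^{(k)},\sigma^{(k)},B^{i,(k)}$ into $b,\sigma,B^i$ in global time, identify the piecewise-assembled local times with $\ell^{A^{i,n}}(\hat X^{i,n})$ by showing the occupation-time limit decomposes across junctions (this is exactly the content of the paper's Lemma~\ref{lemma:local_time} and Corollary~\ref{cor:local_time}), and then track through the three cases of the boundary recursion in \eqref{eq:sys_X_hat_k} to recover $A^{i,n}_t=a_0+\alpha\int_0^t\varrho(t-s)I^{i,n}_s\ds$ with the correct index set dictated by $\xi^i_j$. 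The only cosmetic differences are that you phrase the boundary computation as an explicit induction on $k$ with a clean "head-plus-tail" telescoping of the convolution integrals, whereas the paper handles it by separating the cases $k\le m$, $k=m+1$, $k\ge m+2$ directly; and you pass from the sum over $\{j:\xi^i_j\le t\}$ to $\int_0^t\varrho(t-s)I^{i,n}_s\ds$ by swapping a finite sum with an integral, which the paper does implicitly. Neither difference changes the substance.

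One small slip to fix in the write-up: "evaluating the boundary formula in \eqref{eq:sys_X_hat_k} at local time $0$" should read "at (internal) time $t=0$"; and the continuity of $\hat X^{i,n,(k)}$ at $0$ is given directly as the initial condition $\hat X_0^{i,n,(k)}=\hat X^{i,n,(k-1)}_{\tilde\varsigma^{(k-1)}}$ in \eqref{eq:sys_X_hat_k}, so there is nothing to derive there. Otherwise the argument is sound.
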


\begin{proof}
    See Appendix~\ref{app:concatenation_proofs}.
\end{proof}

While $\mathbf{\hat{X}}^n$ will serve as the pathwise foundation from which we construct the true particle system, it is insufficient for pinning down the desired conditional laws of the infection times. Nevertheless, as the next result confirms, it already gives us the particle trajectories with associated killing times that satisfy the dynamics specified in \eqref{eq:sys_X}. By then also constructing the auxiliary systems $\mathbf{\hat{X}}^{n, (-i)}$ in agreement with the construction of $\mathbf{\hat{X}}^n$ (Proposition \ref{prop:artificial_tagged_i}), we will finally be able to conclude that we have a unique solution in the full sense of Theorem \ref{thm:existence} (Proposition \ref{prop:distribution_tau_i}). 

\begin{prop}[True trajectories with infection] \label{prop:evol_x}
    Let the processes $ \mathbf{X}_t^n = (X_t^{1,n}, \dots, X_t^{n,n})$ be given by \eqref{eq:general_P_defn_by_parts} in Definition \ref{defn:piecewise_process}, and define $\tau^i := \inf \{ t \ge 0 \, : \, X_t^{i,n} = \dagger  \}$ for all $i = 1, \dots, n$. Let $  I^n_t := \frac{1}{n}\sum_{j=1}^{n} \mathbbm{1}_{[0,t]}(\tau^j)$ and $A^n_t := a_0 + \alpha \int_0^t \varrho(t-s) I^n_s \ds$ for all $t\geq 0$. Then the triple $(\mathbf{X}^n,I^n,A^n)$ satisfies the system of equations \eqref{eq:sys_X} from Theorem \ref{thm:existence}.
\end{prop}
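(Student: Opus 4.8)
The plan is to deduce the claim directly from Proposition~\ref{prop:evol_x_hat}, via two structural observations that are immediate from the recursive construction in Section~\ref{sec:recursive_construction}. First I would record that, on comparing \eqref{eq:sys_X_1} and \eqref{eq:sys_X_k} with \eqref{eq:sys_X_hat_1} and \eqref{eq:sys_X_hat_k} and then passing to the concatenations \eqref{eq:general_P_defn_by_parts}, one has, for every $i$, that $X^{i,n}_t = \hat X^{i,n}_t$ for $t<\tau^i$ and $X^{i,n}_t=\dagger$ for $t\ge\tau^i$: within each step the true trajectory agrees with the globally reflected one for as long as the particle is uninfected, and is held at $\dagger$ from its infection step onwards. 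Consequently the last line of \eqref{eq:sys_X} holds by the definition $\tau^i=\inf\{t\ge 0:X^{i,n}_t=\dagger\}$; the third line of \eqref{eq:sys_X} is exactly the definition of $I^n$ in the statement; and the second line follows from the definition of $A^n$ once one notes that $\mathrm{supp}(\varrho)=[0,\bar{d}]$ forces $\int_0^t\varrho(t-s)I^n_s\ds=\int_{t-\bar{d}}^t\varrho(t-s)I^n_s\ds$. It therefore remains only to verify the SDE on $[0,\tau^i)$ and the containment $X^{i,n}_t\in[A^n_t,\infty)\cup\{\dagger\}$.

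The second observation is that the individual moving boundary $A^{i,n}$ supplied by Proposition~\ref{prop:evol_x_hat} agrees with $A^n$ before infection of particle $i$. By \eqref{eq:xi_i_true} the family $\{\xi^i_j\}_{j=1}^{n-1}$ is precisely the collection $\{\tau^j:j\ne i\}$, whence $I^{i,n}_t = I^n_t-\frac{1}{n}\mathbbm{1}_{[0,t]}(\tau^i)$ and so $I^{i,n}_s=I^n_s$ for all $s<\tau^i$. Since $A^n$ is of the same integral form as $A^{i,n}$ in \eqref{eq:sys_X_hat}, namely $a_0+\alpha\int_0^t\varrho(t-s)I^n_s\ds$, this gives
\[
A^n_t = a_0+\alpha\int_0^t\varrho(t-s)I^n_s\ds = a_0+\alpha\int_0^t\varrho(t-s)I^{i,n}_s\ds = A^{i,n}_t \qquad\text{for all }t<\tau^i,
\]
because the integrand only involves times $s\le t<\tau^i$. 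Hence, on $[0,\tau^i)$, the semimartingale $X^{i,n}=\hat X^{i,n}$ reflects off the curve $A^{i,n}=A^n$; and since the local time \eqref{eq:local_time_defn} is built from the occupation measure of a semimartingale relative to a curve over $[0,t]$, the process $\ell^{A^{i,n}}_{\cdot}(\hat X^{i,n})$ coincides on $[0,\tau^i)$ with the local time of $X^{i,n}$ along $A^n$, i.e.\ with the $\ell^{i,n}$ appearing in \eqref{eq:sys_X}. Feeding $X^{i,n}=\hat X^{i,n}$, $A^{i,n}=A^n$ and $\ell^{A^{i,n}}(\hat X^{i,n})=\ell^{i,n}$ into the first line of \eqref{eq:sys_X_hat} then yields the first line of \eqref{eq:sys_X} on $[0,\tau^i)$, while $\hat X^{i,n}_t\ge A^{i,n}_t=A^n_t$ there gives the containment (with $X^{i,n}_t=\dagger$ afterwards). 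Adaptedness of $(\mathbf X^n,I^n,A^n)$ to $(\pazocal{F}^n_t)_{t\ge 0}$ is inherited step by step from the construction, each step of which reads only Brownian increments, initial data and exponential clocks up to the current infection time.

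The only part calling for genuine care — the step I would write out in full — is the first structural observation: verifying that the concatenated true system really does coincide with the concatenated globally reflected system up to each $\tau^i$ and equals $\dagger$ afterwards, and, relatedly, that $\{\xi^i_j\}_j=\{\tau^j\}_{j\ne i}$. This is pure bookkeeping — tracking how the random indices $j^{(k)}$, the times $\varsigma^{(k)}$, the definition \eqref{eq:xi_i_true} and the growing index sets $J^{(k-1)}$ interlock across the $n+1$ recursive steps — and no estimate enters. Once it is pinned down, everything else is the short computation displayed above together with the elementary remark about local times, in line with the paper's comment that the proof is ``little more than a reformulation of the previous proposition.''
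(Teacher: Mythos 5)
Your proposal is correct and follows essentially the same route as the paper's proof: reduce to Proposition~\ref{prop:evol_x_hat} by observing (i) $X^{i,n}_s=\hat X^{i,n}_s$ on $[0,\tau^i)$ and $X^{i,n}_s=\dagger$ thereafter, (ii) $A^{i,n}_t=A^n_t$ on $[0,\tau^i)$, and hence (iii) the local time $\ell^{A^{i,n}}(\hat X^{i,n})$ agrees with $\ell^{i,n}$ there. The only (cosmetic) difference is in how (ii) is argued: you derive the clean pathwise identity $I^{i,n}_t=I^n_t-\frac{1}{n}\mathbbm{1}_{[0,t]}(\tau^i)$ directly from the set equality $\{\xi^i_j\}_j=\{\tau^j\}_{j\neq i}$, whereas the paper works on the event $\{t<\tau^i\}$ and notes that $\xi^i_j>t$ for the later indices; both yield $I^{i,n}_s=I^n_s$ for $s<\tau^i$ and hence $A^{i,n}_t=A^n_t$ on $[0,\tau^i)$.
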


\begin{proof}
    For $i \in \{1,\dots,n\}$, the intermediate construction in Appendix \ref{app:intermediate_systems} guarantees that, pathwise, $X^{i,n}_s = \hat{X}^{i,n}_s$ for $s \in [0,\tau^i)$, where $\hat{X}^{i,n}$ satisfies \eqref{eq:sys_X_hat}. Conversely, $X^{i,n}_s = \dagger$ for all $s \ge \tau^i$. 
    
    By construction, $\tau^i = \varsigma^{(k)}$ for a unique random index $k \in \{ 1, \dots, n \}$. On the event $\{ t<\tau^i \}$, we have $\xi^i_j = \varsigma^{(j)}$ for $j = 1,\dots, k-1$ (with each $\varsigma^{(j)}$ equal to $\tau^{m}$ for some unique $m \neq i$), while $\xi^i_j > t$ for $j=k,\ldots,n$. Consequently, for $t \in [0, \tau^i)$, we have the pathwise equivalence $I_t^n = \frac{1}{n}\sum_{j=1}^{k-1} \mathbbm{1}_{[0,t]}(\xi_j^i) = I_t^{i,n}$. 
    
    It follows immediately that $A^{i,n}_t = A^{n}_t$ pathwise on $[0,\tau^i)$. Therefore, prior to infection, the local time $\ell_t^{A^{i,n}}(\hat{X}^{i,n})$ from the globally reflected system \eqref{eq:sys_X_hat} agrees pathwise with the local time of $X^{i,n}$ along  $A^{n}$ which confirms the dynamics \eqref{eq:sys_X}.
\end{proof}

To address the second part of Theorem \ref{thm:existence}, we construct, in addition to $\hat{\mathbf{X}}^n$, the auxiliary systems $\hat{\mathbf{X}}^{n, (-i)}$ where the infection of the tagged particle is dismissed entirely.

\begin{prop}[Auxiliary system dismissing infection of a tagged particle]\label{prop:artificial_tagged_i}
For each $i=1,\ldots ,n$, let the processes $ \mathbf{\hat X}_t^{n, (-i)} = (\hat X_t^{1,n, (-i)}, \dots, \hat X_t^{n,n, (-i)})$ in $(\mathbb{R} \cup \{\dagger \})^n$ be given by \eqref{eq:general_P_defn_by_parts_minus_i} of Definition~\ref{defn:piecewise_process}, and define the times $\tau^{j,(-i)} := \inf \{ t \ge 0 \, : \, \hat X_t^{j,n, (-i)} = \dagger  \}$ for all $j \neq i$. Then the triple $(\mathbf{\hat X}^{n,(-i)}, \hat I^{n, (-i)},\hat A^{n, (-i)})$ is adapted to the reduced-information filtration $\hat{\mathcal{F}}^{i,n}_t$ defined in \eqref{eq:reduced_filtration} and satisfies the system of equations
\begin{equation}\label{eq:sys_X_minus_i}
    \left\{ \begin{array}{@{}l@{}l}
    \di \hat X_t^{j,n, (-i)} = b(t, \hat X_t^{j,n, (-i)}) \dt + \sigma (t, \hat X_t^{j,n, (-i)}) \di B^j_t + \tfrac{1}{2}\di \hat \ell_t^{j,n,(-i)}, & \quad t \in [0, \tau^{j,(-i)}), \, j \neq i, \vspace{3pt}\\
    \di \hat X_t^{i,n, (-i)} = b(t, \hat X_t^{i,n, (-i)}) \dt + \sigma (t, \hat X_t^{i,n, (-i)}) \di B^i_t + \tfrac{1}{2}\di \hat \ell_t^{i,n}, & \quad t \ge 0, \vspace{3pt}\\
    \hat A^{n, (-i)}_t := a_0 + \alpha \int_0^t \varrho(t-s)  \hat I^{n, (-i)}_s \ds, & \quad t \ge 0, \vspace{4pt}\\ 
    \hat I^{n, (-i)}_t := \frac{1}{n}\sum_{j \neq i} \mathbbm{1}_{[0,t]}(\tau^{j,(-i)}), & \quad t \ge 0, \vspace{4pt}\\ 
    \hat X_{t}^{j,n,(-i)} = \dagger, &\quad t\geq \tau^{j,(-i)}, \, j \neq i,
    \end{array} \right.
\end{equation}	
where $\hat \ell^{j,n,(-i)}$ denotes the local time of the $j$'th particle $\hat X^{j,n, (-i)}$ along the front $\hat A^{n, (-i)}$. We denote $\hat \ell^{i,n} := \hat \ell^{i,n,(-i)}$ to align with the notation of Theorem \ref{thm:existence}.
\end{prop}

\begin{proof}
The proof proceeds identically to Propositions \ref{prop:evol_x_hat} and \ref{prop:evol_x}, utilizing instead the sequence of intermediate systems $\mathbf{\hat X}_t^{n, (-i),(k)}$ detailed in Appendix \ref{app:dismissed_infection}.
\end{proof}

We next confirm that the infection times $\tau^i$ constructed in Proposition \ref{prop:evol_x} are as desired.

\begin{prop}\label{prop:distribution_tau_i}
The infection times $\tau^i = \inf \{t \ge 0 \, : \, X^{i,n}_t = \dagger \}$ from Proposition \ref{prop:evol_x} satisfy
\begin{equation*}
    \Prob (\tau^i \le t \mid \hat{\mathcal{F}}^{i,n}_t) = 1 - \exp \left\{ - \int_0^t \gamma(s, \hat{C}^{n,(-i)}_s) \di \hat{\ell}_s^{i,n} \right\}, \quad \forall t \ge 0,
\end{equation*}
for $i=1,\ldots,n$, where $\hat{\mathcal{F}}^{i,n}_t $ is defined in \eqref{eq:reduced_filtration}, $\hat \ell^{i,n}$ is the local time of the process $\hat X^{i,n, (-i)}$ along the boundary $\hat A^{n, (-i)}$ from Proposition \ref{prop:artificial_tagged_i}, and $\hat{C}^{n,(-i)}_t  = \int_0^t \varrho(t-s) (\hat I_s^{n, (-i)} - \hat I_{s - \bar{d}}^{n, (-i)} ) \ds$.
\end{prop}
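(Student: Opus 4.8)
The plan is to exploit the recursive construction so that, conditionally on $\pazocal{G}^{i,n}_t$ (the sigma-algebra generated by all inputs except the $i^{\text{th}}$ Brownian motion and starting point), the infection of particle $i$ happens via elastic killing with a \emph{known} rate functional. First I would observe that, by the construction in Sections \ref{sec:recursive_construction}--\ref{sec:dismissed_infection}, the trajectory of the tagged particle $X^{i,n}$ on the event $\{t<\tau^i\}$ coincides pathwise with $\hat X^{i,n,(-i)}$ from the artificial system of Proposition \ref{prop:artificial_tagged_i}, and its moving boundary coincides with $\hat A^{n,(-i)}$; this is exactly the point of having introduced the $(-i)$-system, since removing only the direct self-effect (as in $\mathbf{\hat X}^n$) is not enough. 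Consequently the local time driving the killing of particle $i$ is $\hat\ell^{i,n}$, the local time of $\hat X^{i,n,(-i)}$ along $\hat A^{n,(-i)}$, and the instantaneous rate at which killing mass accumulates is $\gamma(s,\hat C^{n,(-i)}_s)$, matching \eqref{eq:gamma_i_step_k} once one checks that the piecewise-defined $C^{i,n,(k)}_t$ from \eqref{eq:contagious_prop_k} concatenates (via \eqref{eq:general_P_defn_by_parts}) to precisely $\hat C^{n,(-i)}_t=\int_0^t\varrho(t-s)(\hat I^{n,(-i)}_s-\hat I^{n,(-i)}_{s-\bar d})\ds$.

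Next I would make the elastic-killing mechanism explicit. Across the intermediate steps, particle $i$ is killed at the first time $t$ such that $\int_0^t \gamma^{i,(k)}(s)\,\di\ell^{A^{i,n,(k)}}_s(\hat X^{i,n,(k)})$ reaches the exponential threshold $\chi^{i,(k)}$, pieced together over the segments $[\varsigma^{(k-1)},\varsigma^{(k)})$. The key bookkeeping step is that the collection $\{\chi^{i,(k)}\}_{k}$ can be assembled, using the memorylessness of the exponential distribution, into a \emph{single} standard exponential $\chi^i$ such that
\begin{equation*}
\tau^i = \inf\Bigl\{t\ge 0 : \int_0^t \gamma(s,\hat C^{n,(-i)}_s)\,\di\hat\ell^{i,n}_s > \chi^i\Bigr\},
\end{equation*}
with $\chi^i$ independent of $\hat{\pazocal{F}}^{i,n}$ (indeed independent of everything except the labelling of which particle carries which threshold, which is $\pazocal{G}^{i,n}$-measurable). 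This is where the independence structure in Assumption \ref{assump:inputs} — the doubly-indexed family $\{\chi^{j,(k)}\}$, independent of the Brownian motions and starting points — is used, together with the fact that on $\{t<\tau^i\}$ the quantities $\hat\ell^{i,n}_t$ and $\hat C^{n,(-i)}_s$ are $\hat{\pazocal{F}}^{i,n}_t$-measurable (the boundary $\hat A^{n,(-i)}$ and hence $\hat C^{n,(-i)}$ depend only on the other particles' infection times, which are $\pazocal{G}^{i,n}$-measurable by construction of the $(-i)$-system).

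Finally, conditioning: write $\Lambda^i_t:=\int_0^t \gamma(s,\hat C^{n,(-i)}_s)\,\di\hat\ell^{i,n}_s$, which is $\hat{\pazocal{F}}^{i,n}_t$-adapted, continuous, and non-decreasing. Then $\{\tau^i\le t\}=\{\Lambda^i_t>\chi^i\}$ (using continuity of $t\mapsto\Lambda^i_t$ to handle the boundary case), and since $\chi^i$ is a standard exponential independent of $\hat{\pazocal{F}}^{i,n}_t$ while $\Lambda^i_t$ is $\hat{\pazocal{F}}^{i,n}_t$-measurable,
\begin{equation*}
\Prob(\tau^i\le t\mid \hat{\pazocal{F}}^{i,n}_t)=\Prob(\chi^i<\Lambda^i_t\mid\hat{\pazocal{F}}^{i,n}_t)=1-e^{-\Lambda^i_t},
\end{equation*}
which is the claim. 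The main obstacle I expect is not the final conditioning argument but the second step: carefully justifying that the per-segment exponential thresholds $\chi^{i,(k)}$ glue into one exponential $\chi^i$ with the right independence, and simultaneously that the piecewise local-time-and-rate integrals telescope exactly into $\Lambda^i_t$ along the concatenation \eqref{eq:general_P_defn_by_parts}. This requires tracking, $\omega$ by $\omega$, which construction step one is in and that the ``remaining'' exponential clock after each segment is again exponential and independent of the past — a standard but notation-heavy induction over $k=1,\dots,n$, complicated by the fact that one must argue within the $(-i)$-system rather than the true system to keep everything $\hat{\pazocal{F}}^{i,n}_t$-measurable.
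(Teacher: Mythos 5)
Your proposal is essentially correct in its identification of the key ingredients, but it takes a genuinely different route from the paper's own proof. The paper does \emph{not} assemble a single exponential random variable $\chi^i$; instead, it directly decomposes the survival probability as
\[
\Prob(\tau^i > t \mid \hat{\pazocal{F}}^{i,n}_t) = \sum_{k} \Prob\bigl(E_k \mid \hat{\pazocal{F}}^{i,n}_t\bigr), \qquad E_k := \{\tau^i>t\}\cap\{t\in[\xi^{(-i),(k-1)},\xi^{(-i),(k)})\},
\]
then writes each $E_k$ as an intersection of per-step threshold events, exploits the conditional independence of $\{\chi^{i,(j)}\}_j$ from $\hat{\pazocal{F}}^{i,n}_t$ together with Lemma \ref{lemma:rate_integral} to turn each $\Prob(E_k\mid\hat{\pazocal{F}}^{i,n}_t)$ into a product of exponentials of the form $e^{-\int_{\xi^{(-i),(j-1)}}^{\xi^{(-i),(j)}}\gamma\,\di\hat\ell^{i,n}}$, and observes that this product telescopes pathwise to $e^{-\int_0^t\gamma\,\di\hat\ell^{i,n}}\mathbbm{1}_{E_k}$. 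Summing over $k$ gives the claim. Your proposal instead reformulates the whole piecewise elastic-killing mechanism as a single elastic-killing event against a glued exponential $\chi^i:=\Lambda^i_{\tau^i}$, where $\Lambda^i_t:=\int_0^t\gamma(s,\hat C^{n,(-i)}_s)\,\di\hat\ell^{i,n}_s$; once one shows $\chi^i$ is standard exponential and independent of $\hat{\pazocal{F}}^{i,n}_\infty$, the conclusion is a one-line conditioning. This is valid: conditional on $\hat{\pazocal{F}}^{i,n}_\infty$, one can check directly that $\Prob(\chi^i>\lambda\mid\hat{\pazocal{F}}^{i,n}_\infty)=e^{-\lambda}$ by the same product-telescoping computation the paper performs at the level of probabilities rather than random variables. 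The tradeoff is that your route front-loads the work into the ``gluing'' lemma, which — as you correctly anticipate — is the entire technical content of the argument and requires essentially the same decomposition over step indices $k$ and the same telescoping. The paper's version avoids having to construct and name the random variable $\chi^i$ (and avoids worrying about its definition on $\{\tau^i=\infty\}$), carrying out the equivalent computation directly in terms of conditional probabilities, which keeps everything in the world of $\hat{\pazocal{F}}^{i,n}_t$-measurable quantities and exponentials that are \emph{given} to be independent, rather than an exponential whose independence must be established. Both approaches hinge on the same three facts you identified: pathwise agreement of $X^{i,n}$ with $\hat X^{i,n,(-i)}$ before $\tau^i$; concatenation of per-step local-time integrals into the $(-i)$-system integral (the paper's Lemma \ref{lemma:rate_integral}); and independence of $\{\chi^{i,(k)}\}_k$ from $\hat{\pazocal{F}}^{i,n}$.
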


\begin{proof}
Fix an arbitrary index $i\in \{1,\ldots,n\}$, and recall the random times $\xi^{(-i),(k)}$, for $k = 1, \dots, n-1$ defined in the recursive construction of Appendix \ref{app:dismissed_infection}.	
We divide the probability that the $i^{\text{th}}$ particle has not yet been infected at time $t$ according to these random times, namely
\begin{align}\label{eq:i_alive_total_prob}
    \Prob (\tau^i > t \mid \hat{\mathcal{F}}^{i,n}_t )
    &=
    \sum_{k = 1}^{n-1} \Prob \left( \{ \tau^i > t \} \cap \{ t \in [\xi^{(-i),(k-1)}, \xi^{(-i),(k)}) \} \, | \, \hat{\mathcal{F}}^{i,n}_t  \right) \nonumber \\
    &\quad +
    \Prob \left( \{ \tau^i > t \} \cap \{ t \ge \xi^{(-i),(n-1)} \} \, | \, \hat{\mathcal{F}}^{i,n}_t  \right).
\end{align}
Now fix a time $t\geq0$ and consider the event $E_k : = \{ \tau^i > t \} \cap \{ t \in [\xi^{(-i),(k-1)}, \xi^{(-i),(k)}) \}$, for any given $k\in \{1,\ldots,n-1\}$. On this event, we have that particle $i$ is not yet infected, and we have that an exponential clock has rung in precisely the first $k-1$ intermediate systems in the construction of $ \mathbf{\hat X}_t^{n, (-i)}$ from Proposition \ref{prop:artificial_tagged_i}. Observe that the $i^{\text{th}}$ particle not being infected at such a time means that, in the construction of the true particle system $\mathbf{X}_t^n$ in Appendix~\ref{app:intermediate_systems}, particle $i$ has not been moved to state $\dagger$ at any of the first $j=0,\ldots,k-2$ infection times $\tilde{\varsigma}^{(j)}$ (defined in \eqref{eq:potential_infection_times_k_step}). More precisely, we can write the event $E_k$ equivalently as
\begin{equation}\label{eq:split_non-infection}
    \bigcap_{j=1}^{k-1} \left\{ X^{i,n,(j)}_{\tilde\varsigma^{(j)}} \neq \dagger \right\}
    \bigcap \left\{X^{i,n,(k)}_{t - \varsigma^{(k-1)}} \neq \dagger  \right\}
    \bigcap \left\{ t \in  [\xi^{(-i),(k-1)}, \xi^{(-i),(k)} ) \right\} \bigcap \left\{ \xi^{(-i),(k-1)} = \varsigma^{(k-1)}\right\},
\end{equation}
where the last event in the expression above highlights that $t-\varsigma^{(k-1)}\geq0$. In fact, on the event $E_k$, we must have $\tilde \varsigma^{(j)} = \tilde{\xi}^{(-i),(j)}  $ for all $j=1,\ldots,k-1$. For every $\omega \in E_k$, we use the construction \eqref{eq:sys_X_k} of $X^{i,n,(j)}$ and the definition \eqref{eq:infection_times_k_step_immune} of $\tilde{\xi}^{(-i),(j)} $, to get rid of the overlap between the first $k$ events in \eqref{eq:split_non-infection} and rewrite the full intersection as
\[
    E_k =  \bigcap_{j=1}^{k-1} \left\{ U^{i,(j)}_{\tilde\xi^{(-i),(j)}} 
    < \chi^{i,(j)} \right\} \cap \left\{ t \in [\xi^{(-i),(k-1)}, 
    \xi^{(-i),(k)}) \right\} \cap \left\{ U^{i,(k)}_{t - \xi^{(-i),(k-1)}} < \chi^{i,(k)} 
    \right\},
\]
with $U^{i,(j)}_s := \int_0^s \gamma^{i,(j)}(r) \di \ell_r^{A^{i,n,(j)}}
    (\hat X^{i,n,(j)})$. Recalling the details of the recursive construction, we can identify a conditionally independent structure with the event $\{ t\in [\xi^{(-i),(k-1}), \xi^{(-i),(k)}) \}$ being $\hat{\mathcal{F}}^{i,n}_t$-measurable while the other events are conditionally independent given $\hat{\mathcal{F}}^{i,n}_t$. Combining this with Lemma \ref{lemma:rate_integral} (where we recall that $\tilde \varsigma^{(j)} = \tilde{\xi}^{(-i),(j)}$ for all $j=1,\ldots,k-1$, for all $\omega \in E_k$), we therefore obtain that
\begin{align*}
    \Prob \left( E_k \, | \, \hat{\mathcal{F}}^{i,n}_t  \right)
    &=
    \prod_{j=1}^{k-1} \Prob \left(  \int_{ \xi^{(-i),(j-1)} }^{ \xi^{(-i),(j)} } \!\! \gamma(s, \hat{C}^{n,(-i)}_s) \di \hat{\ell}_s^{i,n} < \chi^{i, (j)}  \, | \, \hat{\mathcal{F}}^{i,n}_t  \right) \\
    &\quad \cdot \Prob \left( \int_{\xi^{(-i),(k-1)}}^{t} \!\! \gamma(s, \hat{C}^{n,(-i)}_s) \di \hat{\ell}_s^{i,n}< \chi^{i, (k)}  \, | \, \hat{\mathcal{F}}^{i,n}_t  \right)  \mathbbm{1}_{\left\{ t \in  [\xi^{(-i),(k-1)}, \xi^{(-i),(k)}) \right\}},
\end{align*}
for $\hat{C}^{n,(-i)}$ and $\hat{\ell}^{i,n}$ as in the statement of the proposition. By construction, the local time $\hat{\ell}^{i,n} =\hat \ell^{i,n,(-i)}$ and the current index of contagiousness  $\hat{C}^{n,(-i)}$ are both adapted processes for the filtration $(\hat{\mathcal{F}}^{i,n}_t)_{t\geq0}$. Likewise, for any $s\leq t$, the event $\{   \xi^{(-i),(k-1)} \leq s \}$ and the events $ \{ \xi^{(-i),(j-1)} \leq s < \xi^{(-i),(j)} \}$, for $j = 1, \dots, k-1$, are elements of $\hat{\mathcal{F}}^{i,n}_t$. Since the exponential random variables $\chi^{i,(j)}$, for $j = 1, \dots, k$, were chosen to be independent from the inputs generating $\hat{\mathcal{F}}^{i,n}_t$, we thus get
\begin{align*}
    \Prob \left( E_k \, | \, \hat{\mathcal{F}}^{i,n}_t  \right)
    &= \prod_{j=1}^{k-1} \exp \left\{ - \int_{ \xi^{(-i),(j-1)}  }^{ \xi^{(-i),(j)}  } \hspace{-10pt} \gamma(s, \hat{C}^{n,(-i)}_s) \di \hat{\ell}_s^{i,n} \right\}\\
    &\qquad \qquad\cdot 
    \exp \left\{ - \int_{ \xi^{(-i),(k-1)}  }^{t} \hspace{-12pt} \gamma(s, \hat{C}^{n,(-i)}_s) \di \hat{\ell}_s^{i,n} \right\} \mathbbm{1}_{\left\{ t \in  [\xi^{(-i),(k-1)} , \xi^{(-i),(k)} ) \right\}} 
    \\
    &= \exp \left\{ - \int_{0}^{t} \gamma(s, \hat{C}^{n,(-i)}_s) \di \hat{\ell}_s^{i,n}  \right\} \mathbbm{1}_{\left\{ t \in  [\xi^{(-i),(k-1)} , \xi^{(-i),(k)} ) \right\}},
\end{align*}
for $k=1,\ldots,n-1$. Using this expression in \eqref{eq:i_alive_total_prob}, and noting that an analogous argument applies to the final event $E_{n} = \{ \tau^i > t \} \cap \{ t \ge \xi^{(-i),(n-1)} \}$, we finally get
\begin{equation*}
    \Prob (\tau^i > t \, | \, \hat{\mathcal{F}}^{i,n}_t ) =
    \exp \left\{ - \int_{0}^{t} \gamma(s, \hat{C}^{n,(-i)}_s) \di \hat{\ell}_s^{i,n} \right\}.
\end{equation*}
which completes the proof.
\end{proof}

The above results combine to give us Theorem \ref{thm:existence}.

\begin{proof}[Proof of Theorem~\ref{thm:existence}]
The triple $(\mathbf{X}^n, I^n, A^n)$ from Proposition~\ref{prop:evol_x} satisfies \eqref{eq:sys_X}. Moreover, each auxiliary system $\hat{\mathbf{X}}^{n,(-i)}$ from Proposition~\ref{prop:artificial_tagged_i} is adapted to $\hat{\mathcal{F}}^{i,n}_t$ given by 
\eqref{eq:reduced_filtration}, and Proposition~\ref{prop:distribution_tau_i} gives that the infection times of $\mathbf{X}^n$ have the desired conditional laws \eqref{eq:tau_i_prob} in terms of $\hat{\mathcal{F}}^{i,n}_t$ and $\hat{\mathbf{X}}^{n,(-i)}$. For the filtration $\mathcal{F}^n_t$ defined in \eqref{eq:full_filtration}, it can be seen from Appendix~\ref{app:intermediate_systems} that, at each step $k$ of the recursive construction, the trajectories are progressively measurable and the random time $\varsigma^{(k)}$ is a stopping time, so we conclude that $\mathbf{X}^n$ is $\mathcal{F}^n_t$-adapted.

Regarding uniqueness, it is confirmed in Appendix~\ref{app:intermediate_systems} that, at each step, the step-$k$ reflected SDE \eqref{eq:sys_X_hat_k} admits a strong solution which is pathwise unique. Moreover, Lemma~\ref{lem:distinct_infections} ensures that the random index $j^{(k)}$ that achieves the minimum of the potential infection clocks $\tilde{\varsigma}^{i,(k)}$ in \eqref{eq:potential_infection_times_k_step} is almost surely unique. Thus, the trajectories and all the potential infection times at each step $k$ are determined uniquely by the inputs $\{X_0^j, B^j, \chi^{j,(m)} \}_{j \le n, \, m \le k}$. This, together with the construction by concatenation in \eqref{eq:general_P_defn_by_parts}, guarantees that $\mathbf{X}^n$ is almost surely given by a measurable functional of the inputs  $\{X_0^j, B^j, \chi^{j,(m)} \}_{j,m=1}^n$, where we note that $I^n$, and hence also $A^n$, can be recovered from $\mathbf{X}^n$ by definition of the infection times $\tau^i$. In particular, we have uniqueness in law for inputs satisfying Assumption~\ref{assump:inputs}. This completes the proof.
\end{proof}

We end the section by emphasising the following observation which we shall need later.

\begin{prop}[Distinct infection times]\label{prop:distinct_infections}
For any pair of indices $i\neq j$, we have
\[
    \Prob(\tau^i = \tau^j < \infty) = 0.
\]
\end{prop}
\begin{proof} This is immediate from the construction in Appendix \ref{app:intermediate_systems}. Specifically, it holds almost surely that:~(i) at most one particle is moved to $\dagger$ at each step $k$ of the construction, by Lemma \ref{lem:distinct_infections}, and (ii) there is a strictly positive time until the infection of the next step, since the local times are continuous and the exponential random variables are strictly positive.
\end{proof}

\section{Local time under a transformation of the state space}\label{sect:lamperti}

In this section we derive some general results on how the local time of a continuous semimartingale behaves under bijective transformation of the state space (Section \ref{sect:rescaling}). As a special case, we relate a wide class of real-valued reflected diffusions to reflected Brownian motion with drift on a half-line (Section \ref{sect:lamperti_reflected}). In Section \ref{sect:lamperti_for_particle_sys} we apply this to give the proof of Theorem \ref{Thm:Brownian_transformation}.

\subsection{Re-scaling the state space and the effect on local times}\label{sect:rescaling}

Let $X_t$ be a real-valued continuous semi-martingale with respect to a given filtration $(\mathcal{F}_t)_{t\geq 0}$, and consider a random map $(t,\omega, x)\mapsto \Upsilon(t,\omega,x)$ such that $(t,\omega)\mapsto \Upsilon(t,\omega,x)$ is adapted to $(\mathcal{F}_t)_{t\geq 0}$ for all $x\in\R$. Suppose also that $\Upsilon(t,\omega,\cdot): \R \rightarrow \R$ is a homeomorphism with $\lambda^\prime = \Upsilon(t,\omega,\lambda)$ for all $t\in [0,T]$ and $\omega \in \Omega$, for two given points $\lambda,\lambda^\prime \in \R$. This map can then be viewed as a random and time-dependent re-scaling of the state space on either side of $\lambda$, mapping $\lambda$ to a given point $\lambda^\prime$. We want to characterise how the local time of $X$ at $\lambda$ behaves under such a transformation.
For this, we place the following assumptions on the random map $\Upsilon(t,\omega,x)$.

\begin{assump}[The re-scaling map]\label{assump:re-scaling_map}
Fix $T>0$. In addition to the adaptedness, the re-scaling map $\Upsilon:[0,T]\times\Omega \times \R \rightarrow \R$ is required to satisfy the following regularity properties:
\begin{itemize}
    \item[(a)] Each map $x\mapsto \Upsilon(t,\omega,x)$ is strictly increasing with $\Upsilon(t,\omega,\lambda)=\lambda^\prime $, therefore invertible with strictly increasing inverse denoted $x\mapsto \Upsilon^{-1}(t,\omega,x)$ and satisfying $\Upsilon^{-1}(t,\omega,\lambda^\prime)=\lambda $.
    \item[(b)] Each map $x\mapsto \Upsilon(t,\omega,x)$ is a difference of two convex functions.
    \item[(c)] For all $\omega\in \Omega$ and $t\in[0,T]$, the map $x\mapsto \Upsilon(t,\omega,x)$ is continuously differentiable on $[\lambda,\lambda+\delta)$, for some $\delta>0$, where the right-derivative $\partial^+_x\Upsilon(t,\omega,\lambda)$ is used at $x=\lambda$.
    \item[(d)] Uniformly in $\omega\in \Omega$ and $t\in[0,T]$, we have $c \le \partial^+_x\Upsilon(t,\omega,\lambda) \le C$ and $ c \le \partial_x\Upsilon(t,\omega , x) \le  C$ for all $x\in(\lambda,\lambda+\delta)$, for a small enough $\delta>0$ and given constants $c,C>0$.
    \item[(e)] Uniformly in $\omega\in \Omega$ and $x\in(\lambda,\lambda +\delta)$, for some $\delta>0$, the map $t\mapsto \partial_x\Upsilon(t,\omega, x)$ is Lipschitz.
\end{itemize}
\end{assump}
With these assumptions, we obtain the following result on the local time after re-scaling.

\begin{prop}[Local time under re-scaling of the state space]\label{prop:local_time_transformed}
Let $X$ be a continuous semimartingale and let $\Upsilon$ be a random and time-dependent re-scaling map satisfying Assumption~\ref{assump:re-scaling_map}. Then the local time of $X$ at $\lambda$ is related to the local time of $\Upsilon_t(\omega) := \Upsilon(t,\omega , X_t(\omega))$ at $\lambda^\prime = \Upsilon(t,\omega,\lambda)$ by the expression
\[
    \ell_{t}^{\lambda^\prime}(\Upsilon)(\omega)=\int_{0}^{t} \partial_x^+\Upsilon(s,\omega,\lambda ) \di (\ell_{s}^{\lambda}(X)(\omega)),
\]
for all $t\geq 0$, almost surely.
\end{prop}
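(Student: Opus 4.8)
The plan is to reduce the claim to the classical Itô–Tanaka theory by treating $\Upsilon_t(\omega):=\Upsilon(t,\omega,X_t(\omega))$ as a continuous semimartingale and computing its local time at $\lambda'$ via the occupation-time formula. First I would decompose $\Upsilon_t = \Upsilon(t,\omega,X_t)$ using a (generalised) Itô formula: because $x\mapsto \Upsilon(t,\omega,x)$ is a difference of convex functions (Assumption \ref{assump:re-scaling_map}(b)) and $t\mapsto \partial_x\Upsilon(t,\omega,x)$ is Lipschitz near $\lambda$ (Assumption \ref{assump:re-scaling_map}(e)), one gets a semimartingale decomposition of $\Upsilon_t$ whose martingale part has quadratic variation $\di[\Upsilon]_t = (\partial_x\Upsilon(t,\omega,X_t))^2\,\di[X]_t$ on the (random) set where $X_t$ stays in the region $(\lambda,\lambda+\delta)$ where $\Upsilon$ is $C^1$ in $x$. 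The key point is that the definition \eqref{eq:local_time_defn} of $\ell^{h}$ only probes the behaviour of $X$ at and immediately above the curve, so only the one-sided regularity of $\Upsilon$ on $[\lambda,\lambda+\delta)$ matters.

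Next I would apply the occupation-time definition \eqref{eq:local_time_defn} directly:
\[
\ell_t^{\lambda'}(\Upsilon) = \lim_{\varepsilon\to0}\frac1\varepsilon\int_0^t \mathbbm{1}_{[\lambda',\lambda'+\varepsilon)}(\Upsilon_s)\,\di[\Upsilon]_s = \lim_{\varepsilon\to0}\frac1\varepsilon\int_0^t \mathbbm{1}_{[\lambda',\lambda'+\varepsilon)}(\Upsilon(s,\omega,X_s))\,(\partial_x\Upsilon(s,\omega,X_s))^2\,\di[X]_s.
\]
For $\varepsilon$ small, the event $\{\Upsilon(s,\omega,X_s)\in[\lambda',\lambda'+\varepsilon)\}$ is, by strict monotonicity and the bounds in Assumption \ref{assump:re-scaling_map}(d), sandwiched between $\{X_s\in[\lambda,\lambda+\varepsilon/c)\}$ and $\{X_s\in[\lambda,\lambda+\varepsilon/C)\}$, and on such a thin slab $\partial_x\Upsilon(s,\omega,X_s)$ is uniformly close to $\partial_x^+\Upsilon(s,\omega,\lambda)$, which is itself bounded in $[c,C]$. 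The idea is then to change variables in $\varepsilon$ (replace $\varepsilon$ by $\varepsilon\,\partial_x^+\Upsilon(s,\omega,\lambda)$, which is legitimate up to vanishing error by continuity of $s\mapsto\partial^+_x\Upsilon(s,\omega,\lambda)$, itself a consequence of (d)–(e)) to recognise the limit as $\int_0^t \partial_x^+\Upsilon(s,\omega,\lambda)^2 \cdot \frac{1}{\partial_x^+\Upsilon(s,\omega,\lambda)}\,\di\ell_s^{\lambda}(X) = \int_0^t \partial_x^+\Upsilon(s,\omega,\lambda)\,\di\ell_s^{\lambda}(X)$, where I use that $\ell^{\lambda}(X)$ is supported on $\{X_s=\lambda\}$ so only the value of $\partial_x\Upsilon$ at $\lambda$ survives. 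To make the interchange of limit and integral rigorous I would invoke dominated convergence together with the standard fact that $\frac1\varepsilon\int_0^t\mathbbm{1}_{[\lambda,\lambda+\varepsilon)}(X_s)g(s)\,\di[X]_s \to \int_0^t g(s)\,\di\ell_s^{\lambda}(X)$ for bounded continuous (or suitably regular) $g$, applied with $g(s)=\partial_x^+\Upsilon(s,\omega,\lambda)$.

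The main obstacle I anticipate is handling the time-dependence and the merely one-sided regularity of $\Upsilon$ simultaneously: the curve $t\mapsto \lambda'$ is constant here (good), but $\Upsilon_s = \Upsilon(s,\omega,X_s)$ carries a genuine $\di t$-drift coming from $\partial_t\Upsilon$, and one must argue that this drift (of finite variation) does not contribute to $[\Upsilon]$ and, more delicately, that the generalised Itô/Tanaka formula applies despite $\Upsilon(t,\omega,\cdot)$ being only a difference of convex functions rather than $C^2$. I would address this by first proving the statement assuming $\Upsilon$ is $C^1$ in $x$ and Lipschitz in $t$ on a neighbourhood of $\{x=\lambda\}$ (which is all that \eqref{eq:local_time_defn} sees), extending $\Upsilon$ arbitrarily but nicely outside $[\lambda,\lambda+\delta)$ so that global difference-of-convex structure lets us apply the Itô–Tanaka formula of, e.g., the Burdzy–Nualart / time-dependent type, and then noting the answer only depends on the restriction near $\lambda$. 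A secondary technical point is controlling the error when replacing $\partial_x\Upsilon(s,\omega,X_s)$ by $\partial_x^+\Upsilon(s,\omega,\lambda)$ on the shrinking slab and when rescaling $\varepsilon$; both are uniform by Assumption \ref{assump:re-scaling_map}(d)–(e), so the errors are $O(\varepsilon)$ uniformly in $s$ and vanish in the limit.
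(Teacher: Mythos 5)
Your high-level plan is aligned with the paper's: both start from the quadratic-variation identity $\di[\Upsilon]_s=(\partial_x\Upsilon(s,X_s))^2\,\di[X]_s$, pass to an occupation-time expression for $\ell^{\lambda'}(\Upsilon)$, and then try to replace $\partial_x\Upsilon(s,X_s)$ by its boundary value $\partial^+_x\Upsilon(s,\lambda)$ and rescale the shrinking slab so that the $\di\ell^\lambda_s(X)$ measure appears. However, the step on which the whole proposition hinges is exactly the one you treat in a single parenthetical: ``change variables in $\varepsilon$ (replace $\varepsilon$ by $\varepsilon\,\partial^+_x\Upsilon(s,\omega,\lambda)$, which is legitimate up to vanishing error).'' This is not a change of variables in the usual sense, because the rescaling factor depends on $s$, so the slab $\{X_s\in[\lambda,\lambda+\varepsilon/\partial^+_x\Upsilon(s,\lambda))\}$ has a \emph{time-varying width}. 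The ``standard fact'' you invoke, namely $\tfrac1\varepsilon\int_0^t\mathbbm{1}_{[\lambda,\lambda+\varepsilon)}(X_s)\,g(s)\,\di[X]_s\to\int_0^t g(s)\,\di\ell^\lambda_s(X)$, applies to a fixed-width slab with a time-varying integrand; it does not cover the time-varying-slab quantity you have produced. If you try to convert it via the generalized occupation formula (Revuz–Yor, Ex.~1.13, Ch.~6), the condition $\Upsilon(s,a)\in[\lambda',\lambda'+\varepsilon)$ entangles the time variable with the spatial variable $a$, so the $\di a$-integral cannot be evaluated by a clean change of variables $z=\Upsilon(s,a)$ because $s$ ranges over a continuum inside the $\di\ell^a_s$-integral.

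This is precisely the technical obstacle the paper resolves by a different mechanism. After expressing $\ell_t^0(\Upsilon)$ as $\lim_{\varepsilon}\int_{\R^+}\int_0^t\varphi_\varepsilon(\Upsilon(s,y))(\partial_x\Upsilon(s,y))^2\,\di\ell^y_s(X)\,\di y$ (using a mollifier and the generalized occupation formula), the paper \emph{discretizes time} with a partition $\{s_i\}$ and compares three Riemann-sum errors $A_1,A_2,A_3$. The crucial points are: (i) the $s$-dependent change of variables $z=\Upsilon(s_i,y)$ is performed only at \emph{fixed} partition points $s_i$, where it is a legitimate deterministic change of variables in the $\di y$-integral, leaving only a cadlag-in-$z$ argument to take $\varepsilon\downarrow0$ (the term $A_3$); and (ii) the error of freezing time at $s_i$ inside the mollified integrand, term $A_2$, is controlled by Lemma~\ref{lemma:lip}, whose Lipschitz constant blows up like $\varepsilon^{-1}$ but is compensated because the occupation set has width $\order{\varepsilon}$, together with Barlow--Yor to ensure $\sup_y\ell^y_t<\infty$ a.s. Without something playing the role of $A_2$ and Lemma~\ref{lemma:lip}, the interchange of the $\varepsilon$-limit with the time integral is unjustified, and your step ``legitimate up to vanishing error by continuity of $s\mapsto\partial^+_x\Upsilon(s,\omega,\lambda)$'' is where the gap sits: continuity in $s$ alone does not give uniform control as both $\varepsilon$ and the time-discretization error go to zero, and the argument needs the quantitative $\varepsilon^{-1}$-Lipschitz estimate to close.
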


 For clarity of presentation, we postpone the proof to Section \ref{sect:proof_local_time_transformed} and go directly to a specific application which we use in the proof of Theorem \ref{Thm:Brownian_transformation}. We note that, if $\Upsilon$ does \emph{not} depend on time, then our proof drastically simplifies to a direct application of the occupation time formula with a change of variables. In this case, the statement can be found in \cite[Ex.~1.23,~Ch.~VI]{revuz_yor}. We were unable to find a more general time-dependent version in the literature. Herein, we show that Proposition \ref{prop:local_time_transformed} can still be obtained as a consequence of the occupation time formula, but several technical hurdles arise in establishing this fact.

\subsection{Lamperti transformation of reflected diffusions}\label{sect:lamperti_reflected}

We shall now make use of Proposition \ref{prop:local_time_transformed} above in a particular setting that is relevant to our particle system. Consider a reflected diffusion $X_t$ on the positive half-line with dynamics
\begin{equation}\label{eq:gen_reflected_diffusion}
    \di X_t = b(t,\omega, X_t) \dt + \sigma(t,\omega, X_t) \di W_t + \tfrac{1}{2}\di \ell_t^0(X),
\end{equation}
on a given filtered probability space. Throughout this section, we make the following assumptions on the coefficients of \eqref{eq:gen_reflected_diffusion}. Firstly, the functions $b$ and $\sigma$ are jointly measurable in $(t,\omega,x)$, and, for each $x$, adapted in $(t,\omega)$. Secondly, $\sigma$ is $W^{1,\infty}
$-weakly differentiable in $t$ and $x$, and there exist constants $c, C, K_1, K_2>0$ such that, for all $t\in[0,T]$, $x \in \R^+$ and $\omega\in \Omega$, we have $ c \leq \sigma(t,\omega, x) \leq C $, $|\partial_x \sigma(t,\omega,x)| \leq K_1$, and $|\partial_t \sigma(t,\omega,x)| \leq K_2$, for the precise representatives of the weak derivatives. Thirdly, the weak derivatives $\partial_t\sigma$ and $\partial_x\sigma$ are jointly measurable in $(t,\omega,x)$ and adapted in $(t,\omega)$ for each $x$. Finally, the drift $b$ is of at most linear growth in $x$, uniformly in $(t,\omega)$.

By analogy with the usual Lamperti transformation for real-valued Itô diffusions  (see e.g.~\cite[Sect.~3]{luschgy_pages_06}), we define the random map
\begin{equation}\label{eq:lamperti_map}
    \Upsilon(t,\omega ,y):=\int_{0}^{y}\frac{1}{\sigma(t,\omega,x)} \dx.
\end{equation}
Provided $(t,\omega)\mapsto \sigma(t, \omega ,x)$ is adapted to the given filtration, which we assume throughout, it is clear that each $(t,\omega)\mapsto \Upsilon(t,\omega ,y)$ is also adapted to this filtration. Naturally, one could consider other base points than the origin in \eqref{eq:lamperti_map}, but here we stick to the positive half-line.

\begin{lemma}\label{lemma:func_upsilon} In addition to adaptedness, the map \eqref{eq:lamperti_map} satisfies all the properties (a), (b), (c), (d), and (e) of Assumption \ref{assump:re-scaling_map}.
\end{lemma}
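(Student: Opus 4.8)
The plan is to verify each of the five properties (a)--(e) of Assumption \ref{assump:re-scaling_map} directly from the defining formula \eqref{eq:lamperti_map} and the standing hypotheses on $\sigma$ (namely $0 < c \le \sigma \le C$, weak derivatives $\partial_t\sigma, \partial_x\sigma$ bounded by $K_2, K_1$ respectively, all jointly measurable and suitably adapted). Here the base point is $\lambda = 0$ and the image point is $\lambda' = \Upsilon(t,\omega,0) = 0$, so all the conditions at $\lambda$ become conditions at $y = 0$, and the relevant neighbourhood is a right-neighbourhood $[0,\delta)$ of $0$ inside $\R^+$.

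First I would treat (a): since the integrand $1/\sigma(t,\omega,\cdot)$ is strictly positive (bounded below by $1/C$), the map $y \mapsto \Upsilon(t,\omega,y)$ is strictly increasing, and it sends $0$ to $0$; hence it is invertible on its range with strictly increasing inverse, and $\Upsilon^{-1}(t,\omega,0) = 0$. For (c) and (d), note that $\partial_y \Upsilon(t,\omega,y) = 1/\sigma(t,\omega,y)$; since $\sigma$ is (weakly) differentiable in $x$ with bounded derivative and bounded away from $0$, the map $y \mapsto 1/\sigma(t,\omega,y)$ admits a continuous representative on $[0,\delta)$ (indeed $\sigma$ has a locally Lipschitz, hence continuous, representative), so $\Upsilon(t,\omega,\cdot)$ is continuously differentiable on a right-neighbourhood of $0$, with the right-derivative $\partial_y^+\Upsilon(t,\omega,0) = 1/\sigma(t,\omega,0)$; moreover $1/C \le \partial_y\Upsilon(t,\omega,y) = 1/\sigma(t,\omega,y) \le 1/c$ uniformly in $(t,\omega)$ and $y$, giving (d) with constants $1/C$ and $1/c$. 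For (b), since $\partial_y\Upsilon(t,\omega,\cdot) = 1/\sigma(t,\omega,\cdot)$ is (locally) Lipschitz in $y$, it is of bounded variation on compacts, so $\Upsilon(t,\omega,\cdot)$ is the integral of a BV function and hence a difference of two monotone functions, i.e.\ a difference of two convex functions.

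The property (e) is the one requiring a small argument rather than a one-line observation, so that is the step I expect to be the main (mild) obstacle. We must show $t \mapsto \partial_y\Upsilon(t,\omega,y) = 1/\sigma(t,\omega,y)$ is Lipschitz in $t$, uniformly in $\omega$ and in $y$ on a right-neighbourhood of $0$. Writing $1/\sigma(t,\omega,y) - 1/\sigma(s,\omega,y) = (\sigma(s,\omega,y) - \sigma(t,\omega,y))/(\sigma(t,\omega,y)\sigma(s,\omega,y))$ and using $\sigma \ge c$ in the denominator, it suffices to control $|\sigma(t,\omega,y) - \sigma(s,\omega,y)|$ by $\mathrm{const}\cdot|t-s|$; this follows from $|\partial_t\sigma| \le K_2$ together with the absolute continuity of $t \mapsto \sigma(t,\omega,y)$ implied by weak differentiability in $t$ (so $\sigma(t,\omega,y) - \sigma(s,\omega,y) = \int_s^t \partial_r\sigma(r,\omega,y)\,dr$ for the chosen representatives). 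Hence $|1/\sigma(t,\omega,y) - 1/\sigma(s,\omega,y)| \le (K_2/c^2)|t-s|$, uniformly in $\omega$ and $y$, which is (e). Finally, the adaptedness clause: since $(t,\omega)\mapsto \sigma(t,\omega,x)$ is adapted for each $x$, so is $(t,\omega)\mapsto 1/\sigma(t,\omega,x)$, and the integral $\Upsilon(t,\omega,y) = \int_0^y 1/\sigma(t,\omega,x)\,dx$ is then adapted as well (approximating the integral by Riemann sums of adapted quantities, or invoking joint measurability and Fubini), which is exactly what is needed to apply Proposition \ref{prop:local_time_transformed}. Collecting (a)--(e) and the adaptedness completes the proof.
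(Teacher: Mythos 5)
Your proposal is correct and follows essentially the same route as the paper: the paper dismisses (a)--(d) as immediate consequences of the boundedness and non-degeneracy of $\sigma$ and performs exactly the computation $|1/\sigma(t,x)-1/\sigma(s,x)| \le K_2 c^{-2}|t-s|$ for (e), which is precisely the step you identify and carry out. Your write-up is simply more explicit on (a)--(d) and on adaptedness, with no substantive difference.
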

\begin{proof}
The adaptedness was addressed just above. Properties (a)--(d) follow immediately from the definition of $\Upsilon$ together with the non-degeneracy and boundedness of $\sigma(t,\omega,x)$. For the remaining property (e), we can observe that
\begin{align*}
    \left| \partial_x\Upsilon(t,x) - \partial_x\Upsilon(s,x) \right| 
    = \left|  \frac{1}{\sigma(t,x)} - \frac{1}{\sigma(s,x)}    \right|  
    &\le  \frac{ | \sigma(s,x) - \sigma(t,x)|}{\sigma(t,x) \sigma(s,x)}   
    \le \frac{K_2}{c^2} |t-s|,
\end{align*}
so we have the desired Lipschitzness.
\end{proof}

We can now confirm how the rescaling of the state-space by the Lamperti transformation \eqref{eq:lamperti_map} affects the local time of the reflected diffusion \eqref{eq:gen_reflected_diffusion} at the origin.

\begin{prop}\label{prop:Lamperti}
Let $X$ be given by \eqref{eq:gen_reflected_diffusion} and let $\Upsilon$ be given by \eqref{eq:lamperti_map}.
Then $\Upsilon_t := \Upsilon(t,X_t)$ defines another reflected diffusion on $\R^+$ with dynamics
\begin{equation}\label{eq:SDE_lamperti}
    \di\Upsilon_{t}=\tilde{b}(t, \omega, \Upsilon_{t}) \dt + \di W_t + \tfrac{1}{2} \di \ell_t^0(\Upsilon),
\end{equation}
where the modified drift $\tilde{b}$ is jointly measurable, adapted in $(t,\omega)$, and satisfies
\begin{equation*}
    \esssup_{(t,\omega)\in [0,T]\times \Omega}| \tilde{b}(t,\omega ,x) | \leq \kappa (1+x)\quad \text{for all} \quad x\geq0
\end{equation*}
for a given constant $\kappa >0$.
\end{prop}

\begin{proof}
By definition, we have $\Upsilon_t(\omega)=\Upsilon(t,X_t(\omega))(\omega)$, where  $\Upsilon$ is given by \eqref{eq:lamperti_map}. By the assumptions on $\sigma$, we can then apply the generalized It\^o's formula \cite[Ex.~3.12,~Ch.~4]{revuz_yor} (also generalized to weak differentiability as in \cite[Thm.~10.1, Ch.~2]{Krylov_book}) for the adapted function $(t,\omega , x )\mapsto \Upsilon(t,\omega,x)$. This is readily seen to yield
\begin{equation}\label{eq:SDE_upsilon}
    \Upsilon_t(\omega) - \Upsilon_0(\omega) = \int_0^t\bar{b}(s,\omega, X_s(\omega)) \ds + W_t(\omega) + \frac{1}{2} \int_0^t\frac{1}{\sigma(s,\omega, X_s(\omega))} \di (\ell_s^0(X)(\omega)),
\end{equation}
where we have introduced the jointly measurable and adapted function
\begin{equation}\label{eq:modified_drift}
    \bar{b}(t, \omega, x) := - \int_0^{x} \frac{\partial_t \sigma(t,\omega,y)}{\sigma(t,\omega, y)^{2}}  \dy + \frac{b(t,\omega,x)}{\sigma(t,\omega,x)} -\frac{1}{2} \partial_x \sigma(t, \omega,x).
\end{equation}
Since $ \ell^0(X)$ is carried by the set $\{ t\geq 0 \, : \, X_t = 0 \}$ \cite[Prop.~1.3]{revuz_yor}, the final term in \eqref{eq:SDE_upsilon} equals
\begin{equation}\label{eq:rescaled_local_time}
    \int_0^t\frac{1}{\sigma(s,\omega, 0)} \di (\ell_s^0(X)(\omega)).
\end{equation}
By Lemma \ref{lemma:func_upsilon}, we can apply Proposition~\ref{prop:local_time_transformed} to conclude that \eqref{eq:rescaled_local_time} is in fact equal to $\ell^0_t(\Upsilon)$. Next, we let $\Upsilon^{-1}(t,x)(\omega)$ denote the well-defined inverse of $x\mapsto \Upsilon(t,x)(\omega)$ on $\R^+$, and define
\begin{equation}\label{eq:modified_drift_tilde}
    \tilde{b}(t,\omega ,x):=\bar{b}\bigl(t,\omega, \Upsilon^{-1}(t,x)(\omega) \bigr).
\end{equation}
Then it follows that $\Upsilon$ has the dynamics
\[
    \Upsilon_t(\omega) - \Upsilon_0(\omega) = \int_0^t\tilde{b}(s,\omega, \Upsilon_s(\omega)) \ds + W_t(\omega) + \frac{1}{2} \ell^0_t(\Upsilon)(\omega).
\]
Finally, it remains to observe that, in view of how $\tilde{b}$ was defined through \eqref{eq:modified_drift} and \eqref{eq:modified_drift_tilde}, we have
\[
    |\tilde{b}(t,\omega,x)|\leq c_1 x + c_2(1+x)+c_3,
\]
by the definition of $\Upsilon$ and by the assumptions on the coefficients of \eqref{eq:gen_reflected_diffusion}.
\end{proof}

Due to Proposition~\ref{prop:local_time_transformed}, the above proof was an immediate consequence of Itô's formula. While Proposition~\ref{prop:local_time_transformed} holds for continuous semimartingales in general, here we applied it to the reflected diffusion \eqref{eq:gen_reflected_diffusion}. In this case, we note that one could also pursue an alternative proof. Very briefly, one sees that $\Upsilon_t=|\Upsilon_t|$, since $X_t$ is non-negative at all times, and then one can apply Tanaka's formula which makes the local time $\ell^0_t(\Upsilon)$ of $\Upsilon_t$ appear. Moreover, one can exploit that $\ell^0(X)$ is part of the reflected dynamics for $X$, by comparing with \eqref{eq:SDE_upsilon}. Thus, using the assumptions on our coefficients, the properties of $\ell^0(X)$, and appropriately cancelling terms, one  arrives at \eqref{eq:SDE_lamperti} with the representation \eqref{eq:rescaled_local_time} for the local time.

\subsection{Reflected Brownian motion in the frame of the moving boundary}\label{sect:lamperti_for_particle_sys}

In this section we return to the specific setting of our particle system and use Proposition \ref{prop:Lamperti}, with some minor variations, to give a proof of Theorem \ref{Thm:Brownian_transformation}.

\begin{proof}[Proof of Theorem \ref{Thm:Brownian_transformation}] 

By the construction of the particle system in Section \ref{sec:existence_sys}, and recalling the definition $\tau^i=\inf\{t>0:X_{t}^{i,n}=\dagger\}$, we can check that $X_{t}^{i,n}\in(a,b)$ if and only if we have $\hat{X}_{t}^{i,n, (-i)}\in(a,b)$ and $t<\tau^i$, since $X_{t}^{i,n}(\omega)=\hat{X}_{t}^{i,n, (-i)}(\omega)$ for all $\omega\in\{t<\tau^{i}\}$. Moreover, $\mathcal{G}_{t}^{i,n}\subseteq\mathcal{\hat{{F}}}_{t}^{i,n}$, so we can then write
\begin{align*}
    \Prob(X_{t}^{i,n}\in(a,b)\mid\mathcal{G}_{t}^{i,n})
    & =\E\bigl[\mathbbm{1}_{\{\hat{X}_{t}^{i,n, (-i)}\in(a,b)\}}\Prob(t<\tau^{i}\mid\mathcal{\hat{F}}_{t}^{i,n})\mid\mathcal{G}_{t}^{i,n}].
\end{align*}
Now observe that we can apply Fubini's theorem, a change of variables, and integration by parts, to deduce that
\begin{align*}
    \int_0^t \int_{s-\bar{d}}^s \varrho(s-r) \di \hat{I}^{n, (-i)}_r \di s &=\int_{0}^t \int_0^{t-r}  \varrho(u) \di u  \di \hat{I}^{n, (-i)}_r = -\int_{0}^t  \hat{I}^{n, (-i)}_r \di \Bigl( \int_0^{t-r}\varrho(u) \di u \Bigr)  \\
    & =  \int_{0}^t  \varrho(t-r) \hat{I}^{n, (-i)}_r \di  r = \int_{t-\bar{d}}^t  \varrho(t-r) \hat{I}^{n, (-i)}_r \di r,
\end{align*}
where we used our convention that $\hat{I}^{n, (-i)}_r=0$ for $r\leq 0$. Consequently, the advancing front $\hat{A}^{n, (-i)}$ is absolutely continuous with
\begin{equation}\label{eq:A_is_AC}
    \hat{A}^{n, (-i)}_t = a_0 + \int_0^t \partial_s\hat{A}^{n, (-i)}_s \di s,\quad  \partial_s\hat{A}^{n, (-i)}_s := \alpha \int_{s-\bar{d}}^s \varrho(s-r) \di \hat{I}^{n, (-i)}_r,
\end{equation}
As $\varrho$ is right-continuous and the total variation of $\hat{I}^{n, (-i)}_r$ is bounded by $1$, we furthermore get
\begin{equation}\label{eq:A_n_derivative_bound}
    \Vert \partial_s\hat{A}^{n, (-i)}\Vert_{L^\infty(\mathbb{R^+})} \leq \alpha \Vert \varrho \Vert_{L^\infty([0,\bar{d}])}<\infty.
\end{equation}
By considering $\hat{X}_{t}^{i,n, (-i)}$ in the frame of the associated moving boundary $\hat{A}^{n,(-i)}$, we can therefore align ourselves with the setting of Section \ref{sect:lamperti_reflected}. Indeed, setting $Y_t^{i,n}:=\hat{X}_{t}^{i,n, (-i)}-\hat{A}^{n, (-i)}_t$ we have
\begin{equation}\label{eq:shift_halfline}
    Y_t^{i,n}= Y_0^{i,n} + \int_0^t b(s,Y_s^{i,n}+\hat{A}^{n, (-i)}_s) - \partial_s\hat{A}^{n, (-i)}_s \ds +\int_0^t \sigma(s,Y_s^{i,n}+\hat{A}_s^{n, (-i)})\di B^i_s + \frac{1}{2} \ell^0_t(Y^{i,n}),
\end{equation}
so $Y_t^{i,n}$ is a reflected diffusion on the positive half-line of the form \eqref{eq:gen_reflected_diffusion}. From here, we re-scale the state-space according to the specific Lamperti transformation
\begin{equation*}
    \Upsilon^{i,n}(t,y)(\omega):= \Upsilon^{i,n}(t,\omega, y):=\int_0^y \frac{1}{\sigma(t,x+\hat{A}^{n, (-i)}_t(\omega))}\dx.
\end{equation*}
Due to \eqref{eq:A_is_AC}--\eqref{eq:A_n_derivative_bound}, the map $(t,\omega,x)\mapsto \sigma(t,x+\hat{A}^{n, (-i)}_t(\omega))$ is such that Lemma \ref{lemma:func_upsilon} holds for this definition of $\Upsilon^{i,n}$. Thus, we can apply Proposition \ref{prop:Lamperti} to see that 
\begin{equation}\label{eq:Z_Upsilon_Y}
    Z_{t}^{i,n}(\omega):=\Upsilon^{i,n}(t,\omega,Y^{i,n}_t(\omega))
\end{equation}
has the desired dynamics \eqref{eq:Z_i_n_dynamics}. Next, $\Upsilon^{i,n}(t,\cdot)$ is strictly increasing, so the image $\Upsilon^{i,n}(t,(x,y))$ is equal to the open interval from $\Upsilon^{i,n}(t,x)$ to $\Upsilon^{i,n}(t,y)$ for any $x\leq y$. Thus, by \eqref{eq:Z_Upsilon_Y}, the event that $\hat{X}_{t}^{i,n, (-i)}\in (a,b)$ is equivalent to $Z^{i,n}_t$ being in the open interval from $\Upsilon^{i,n}(t,a-\hat{A}_{t}^{n, (-i)})$ to $\Upsilon^{i,n}(t,b-\hat{A}_{t}^{n, (-i)})$, and hence
\begin{equation}
    \Prob(X_{t}^{i,n}\in(a,b)\mid\mathcal{G}_{t}^{i,n})=\E\bigl[\mathbbm{1}_{\{Z^{i,n}\in(\Upsilon^{i,n}(t,a-\hat{A}_{t}^{n, (-i)}),\Upsilon^{i,n}(t,b-\hat{A}_{t}^{n, (-i)}))\}}\Prob(t<\tau^{i}\mid\mathcal{\hat{F}}_{t}^{i,n})\mid\mathcal{G}_{t}^{i,n}].\label{eq:prob_estimate_tower_law}
\end{equation}
Finally, Proposition~\ref{prop:local_time_transformed} gives $\di \hat{\ell}^{i,n}_t = \sigma(r,\hat{A}_r^{n,(-i)})\di \ell_r^0(Z^{i,n})$, so the definition of $\tau_Z^{i,n}$ in (\ref{eq:tau_Upsilon}) yields
\begin{align*}
\Prob\bigl(t<\tau_{Z}^{i,n}\mid \mathcal{G}_{t}^{i,n}\vee\sigma(Z_{s}^{i,n}:s\in[0,t])\big) & =e^{-\int_{0}^{t}\sigma(r,\hat{A}_{r}^{n, (-i)})\gamma(r,\hat{C}_{r}^{n, (-i)})\di \ell_{r}^{0}(Z^{i,n})}
=e^{-\int_{0}^{t}\gamma(r,\hat{C}_{r}^{n, (-i)})\di \hat{\ell}_{r}^{i,n}},
\end{align*}
where the last term agrees with $\Prob(t<\tau^{i}\mid\mathcal{\hat{F}}_{t}^{i,n})$ by virtue of \eqref{eq:tau_i_prob}. Inserting this into (\ref{eq:prob_estimate_tower_law}), we have the desired expression (\ref{eq:Prob_Brownian}) for $Z^{i,n}$ and $\tau_{Z}^{i,n}$ which completes the proof.
\end{proof}

\subsection{Proof of Proposition \ref{prop:local_time_transformed}}\label{sect:proof_local_time_transformed}

For the remainder of this section, since only a linear shift is involved, there is no loss of generality in assuming $\lambda = \lambda^\prime = 0$. For notational simplicity, we also suppress the dependence of $\Upsilon$ on $\omega$ throughout the proofs. We first single out an auxiliary lemma.

\begin{lemma}\label{lemma:lip}
Let $\Upsilon$ satisfy Assumption \ref{assump:re-scaling_map},and, for every $\varepsilon > 0$, let $\varphi_{\varepsilon} \in C^{\infty}_0$ be a mollifier supported on $[0, \varepsilon]$ which smoothly approximates the Dirac mass at 0. For all $\varepsilon>0$, the function $t \mapsto \varphi_{\varepsilon}(\Upsilon(t,\omega,x)) (\partial_x \Upsilon(t,\omega,x))^2$ is then Lipschitz continuous, and the Lipschitz constant can be taken to be proportional to $\varepsilon^{-1}$ uniformly in $x \in \R^+$ and $\omega \in \Omega$.
\end{lemma}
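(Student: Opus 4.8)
The plan is to bound the increments of $f_{\varepsilon}(t):=\varphi_{\varepsilon}(\Upsilon(t,\omega,x))\,(\partial_{x}\Upsilon(t,\omega,x))^{2}$ in $t$ directly, via a product-rule splitting, after first localising $x$ to a small right-neighbourhood of $\lambda$; here I take $\lambda^{\prime}=0$, as is forced by $\varphi_{\varepsilon}$ being centred at the origin and as holds in the application to the Lamperti map. For the localisation, note that since $\varphi_{\varepsilon}$ is supported in $[0,\varepsilon]$ and $x\mapsto\Upsilon(t,\omega,x)$ is strictly increasing with $\Upsilon(t,\omega,\lambda)=0$, the map $x\mapsto\varphi_{\varepsilon}(\Upsilon(t,\omega,x))$ vanishes unless $x\in(\lambda,\Upsilon^{-1}(t,\omega,\varepsilon))$; and by the lower bound $\partial_{x}\Upsilon\geq c$ from property (d) of Assumption~\ref{assump:re-scaling_map} (including the right-derivative at $\lambda$) one has $\Upsilon(t,\omega,x)\geq c(x-\lambda)$ on $[\lambda,\lambda+\delta)$, so that for $\varepsilon<c\delta$ this interval sits inside $(\lambda,\lambda+\varepsilon/c)\subseteq(\lambda,\lambda+\delta)$. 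Hence $f_{\varepsilon}\equiv0$ for $x\notin[\lambda,\lambda+\varepsilon/c)$, and for the remaining $x$ we may freely invoke the regularity properties (c), (d), (e), which hold on $[\lambda,\lambda+\delta)$, together with the standard mollifier scaling $\|\varphi_{\varepsilon}\|_{\infty}\lesssim\varepsilon^{-1}$ and $\|\varphi_{\varepsilon}'\|_{\infty}\lesssim\varepsilon^{-2}$.

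Next, fixing such an $x$ and $s,t\in[0,T]$, I would write
\[
f_{\varepsilon}(t)-f_{\varepsilon}(s)=\varphi_{\varepsilon}(\Upsilon(t,\omega,x))\bigl[(\partial_{x}\Upsilon(t,\omega,x))^{2}-(\partial_{x}\Upsilon(s,\omega,x))^{2}\bigr]+(\partial_{x}\Upsilon(s,\omega,x))^{2}\bigl[\varphi_{\varepsilon}(\Upsilon(t,\omega,x))-\varphi_{\varepsilon}(\Upsilon(s,\omega,x))\bigr].
\]
For the first term I bound $|\varphi_{\varepsilon}|\leq\|\varphi_{\varepsilon}\|_{\infty}\lesssim\varepsilon^{-1}$, factor the difference of squares using $|\partial_{x}\Upsilon|\leq C$ from (d), and apply the Lipschitz-in-$t$ bound of property (e); this gives a contribution $\lesssim\varepsilon^{-1}|t-s|$. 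For the second term I use $(\partial_{x}\Upsilon(s,\omega,x))^{2}\leq C^{2}$ and $|\varphi_{\varepsilon}(y)-\varphi_{\varepsilon}(y^{\prime})|\leq\|\varphi_{\varepsilon}'\|_{\infty}|y-y^{\prime}|\lesssim\varepsilon^{-2}|y-y^{\prime}|$, and the decisive point is that, for $x$ in the localised range, $t\mapsto\Upsilon(t,\omega,x)$ is itself Lipschitz with a constant of order $\varepsilon$: since $\Upsilon(\cdot,\omega,\lambda)\equiv0$ and $\Upsilon(\cdot,\omega,\cdot)$ is $C^{1}$ in $x$ on $[\lambda,\lambda+\delta)$ by (c),
\[
\Upsilon(t,\omega,x)-\Upsilon(s,\omega,x)=\int_{\lambda}^{x}\bigl[\partial_{x}\Upsilon(t,\omega,u)-\partial_{x}\Upsilon(s,\omega,u)\bigr]\di u,
\]
whence property (e) yields $|\Upsilon(t,\omega,x)-\Upsilon(s,\omega,x)|\leq(x-\lambda)\,L\,|t-s|\leq(\varepsilon/c)\,L\,|t-s|$ with $L$ the Lipschitz constant from (e). Substituting, the second term is $\lesssim C^{2}\varepsilon^{-2}\cdot(\varepsilon/c)L|t-s|\lesssim\varepsilon^{-1}|t-s|$.

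Adding the two contributions gives $|f_{\varepsilon}(t)-f_{\varepsilon}(s)|\leq K\varepsilon^{-1}|t-s|$ with $K$ depending only on $c$, $C$, $L$, $\|\varphi\|_{\infty}$ and $\|\varphi^{\prime}\|_{\infty}$, hence uniformly in $x\in\mathbb{R}^{+}$ and $\omega\in\Omega$, which is the claim. I do not foresee any genuine difficulty here; the one point that requires care, and is really the heart of the estimate, is the interplay in the localisation step: differentiating the mollifier costs a factor $\varepsilon^{-2}$, but this is exactly compensated by the fact that the relevant $x$ lie within $O(\varepsilon)$ of the point $\lambda$ at which $\Upsilon$ is pinned, so that $\Upsilon(\cdot,\omega,x)$ acquires a Lipschitz constant of order $\varepsilon$. (For $\varepsilon\geq c\delta$ one is outside the mollifier regime of interest; the same splitting applies on the—now possibly larger—support, and in the setting of Section~\ref{sect:lamperti_reflected} the bounds (d)--(e) in fact hold globally, so the conclusion persists, while in general one simply restricts attention to small $\varepsilon$.)
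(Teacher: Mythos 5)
Your proposal is correct and follows essentially the same approach as the paper: the same localisation of $x$ to an $O(\varepsilon)$ right-neighbourhood of $\lambda$ via $\Upsilon(t,\omega,x)\ge c(x-\lambda)$, the same product-rule splitting into the two terms, and the same crucial observation that the $\varepsilon^{-2}$ from $\|\varphi_\varepsilon'\|_\infty$ is offset by the $O(\varepsilon)$ Lipschitz-in-$t$ constant of $\Upsilon(\cdot,\omega,x)$ for $x$ in the localised range (obtained by the fundamental theorem of calculus and Assumption (e)). The only cosmetic difference is the order in which the two terms of the splitting are treated.
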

\begin{proof}To simplify notation, we suppress the dependence on $\omega$. For each $\varepsilon > 0$, we have $\varphi_{\varepsilon} \in C^{\infty}_0$ supported on $[0, \varepsilon]$ with $\Vert \varphi_{\varepsilon} \Vert_{L^1} = 1 $. Furthermore, we can assume without loss of generality that there are constants $k_1,k_2 >0$ such that $\Vert \varphi_{\varepsilon} \Vert_{\infty} \le k_1 \varepsilon^{-1}$ and $\Vert \partial_x \varphi_{\varepsilon} \Vert_{\infty}\le k_2 \varepsilon^{-2}$ for all $\varepsilon>0$.

For every $s \ge 0$, it follows from (a) and (d) in Assumption \ref{assump:re-scaling_map} along with the fundamental theorem of calculus that $\Upsilon(s,x)=\int_0^x \partial_y \Upsilon(s,y) \dy \geq c x$ for small enough $x\geq 0$, and the map is also increasing, so $\varphi_{\varepsilon}(\Upsilon(t,x)) = \varphi_{\varepsilon}(\Upsilon(t,x)) \mathbbm{1}_{ \left\{ 0 \le x \le \varepsilon /c \right\} }$ for $\varepsilon>0$ small enough. Given this, and applying the triangle inequality, we get
\begin{align}\label{eq:lip-0}
    &\left| \varphi_{\varepsilon}(\Upsilon(t,x))(\partial_x \Upsilon(t,x))^2 - \varphi_{\varepsilon}(\Upsilon(s,x)) (\partial_x \Upsilon(s,x))^2    \right|  \nonumber \\
    &\quad \le  (\partial_x \Upsilon(s,x))^2  \left| \varphi_{\varepsilon}(\Upsilon(t,x))  - \varphi_{\varepsilon}(\Upsilon(s,x))   \right| \mathbbm{1}_{ \left\{ 0 \le x \le \varepsilon /c \right\} }  \nonumber \\
    &\qquad + \varphi_{\varepsilon}(\Upsilon(t,x)) \left|  (\partial_x \Upsilon(t,x))^2   -  (\partial_x \Upsilon(s,x))^2   \right| \mathbbm{1}_{ \left\{ 0 \le x \le \varepsilon /c \right\} } .
\end{align}
We now look for bounds for each of the two terms on the right-hand side. By the fundamental theorem of calculus for $\varphi_\varepsilon$, we get
\begin{align*}
    \left| \varphi_{\varepsilon}(\Upsilon(t,x))  - \varphi_{\varepsilon}(\Upsilon(s,x))   \right| 
    = \left| \int_{\Upsilon(s,x)}^{\Upsilon(t,x)} \partial_y \varphi_{\varepsilon}(y) \dy  \right| 
    &\le k_2 \varepsilon^{-2} \left| \Upsilon(t,x) - \Upsilon(s,x)   \right|. \nonumber 
\end{align*}
Since $\Upsilon(t,0)=\Upsilon(s,0) = 0$, the fundamental theorem of calculus and Assumption \ref{assump:re-scaling_map}(e) then give
\begin{equation*}
    \left| \varphi_{\varepsilon}(\Upsilon(t,x))  - \varphi_{\varepsilon}(\Upsilon(s,x))   \right| 
    \le k_2 \varepsilon^{-2} \int_0^x |\partial_y \Upsilon(t,y) - \partial_y \Upsilon(s,y)|   \dy \le k_2 \varepsilon^{-2} Kx|t-s|,
\end{equation*}
for $x\in [0,\varepsilon/c]$ with $\varepsilon>0$ small enough. Moreover, by Assumption \ref{assump:re-scaling_map}(d) $\partial_x \Upsilon(s,x)\le C$ for all $x\in [0,\varepsilon/c]$ with $\varepsilon>0$ small enough. Thus, the first term in \eqref{eq:lip-0} is bounded by $\tilde{C}_1 \varepsilon^{-2} x|t-s|$ on the support $x \le \varepsilon/c$, with $\tilde{C}_1 = C^2 K k_2$.

Turning to the second term on the right-hand side of \eqref{eq:lip-0}, we have $\Vert \varphi_{\varepsilon} \Vert_{\infty} \le k_1 \varepsilon^{-1}$. Furthermore, using the Lipschitz continuity and the uniform upper bound $C$:
\begin{align*}
    \left|  (\partial_x \Upsilon(t,x))^2   -  (\partial_x \Upsilon(s,x))^2   \right| 
    &= |\partial_x \Upsilon(t,x)+\partial_x \Upsilon(s,x)| |\partial_x \Upsilon(t,x) - \partial_x \Upsilon(s,x)|
    \le 2C K |t-s|.
\end{align*}
Thus, the second term is bounded by $\tilde{C}_2 \varepsilon^{-1} |t-s|$, with $\tilde{C}_2 =2 C K k_1$. Combining these bounds into \eqref{eq:lip-0}, we obtain the final Lipschitz estimate
\begin{align*}
    \left| \varphi_{\varepsilon}(\Upsilon(t,x))(\partial_x \Upsilon(t,x))^2 - \varphi_{\varepsilon}(\Upsilon(s,x)) (\partial_x \Upsilon(s,x))^2    \right|
    &\le (\tilde{C}_1 \varepsilon^{-2} x + \tilde{C}_2 \varepsilon^{-1})|t-s| \mathbbm{1}_{ \left\{ 0 \le x \le \varepsilon / c \right\}},
\end{align*}
for constants $\tilde{C}_1, \tilde{C}_2$. Since the indicator function restricts to $x\leq \varepsilon /c$, the claim follows.
\end{proof}

We now return to the proof of Proposition~\ref{prop:local_time_transformed}.

\begin{proof}[Proof of Proposition~\ref{prop:local_time_transformed}]				
Let us begin the proof by fixing a family of mollifiers $\left\{ \varphi_{\varepsilon} \right\}_{\varepsilon \in \R}$ smoothly approximating the Dirac mass at 0, as in the proof of Lemma \ref{lemma:lip}. By Assumption \ref{assump:re-scaling_map}(b)--(c), $\Upsilon_t$ is again a semimartingale with $[\Upsilon]_t=\int_0^t (\partial_x \Upsilon(s,X_s))^2 \di[X]_s$, for all $t\geq0$, as can e.g.~be seen from a time-dependent version of the Meyer--It\^o formula \cite[Ch.~IV, Thm.~70]{protter}. Applying the occupation time formula \cite[Cor.~1.6,~Ch.~6]{revuz_yor} with positive Borel function $\varphi_{\varepsilon}$, and using the almost sure right-continuity of the local time in $x$ for continuous semi-martingales \cite[Ch.~6,~Thm.~1.7]{revuz_yor},
\begin{equation*}
    \ell_t^0(\Upsilon)
    = \lim_{\varepsilon \downarrow 0} \int_{0}^t \varphi_{\varepsilon}(\Upsilon_s) \di [\Upsilon ]_s =  \lim_{\varepsilon \downarrow 0} \int_{0}^t \varphi_{\varepsilon}(\Upsilon(s, X_s)) (\partial_x \Upsilon(s,X_s))^2 \di [X]_s ,
\end{equation*}
for all $t\geq 0$ (almost surely). Next, the generalised occupation time formula for Borel functions on $[0, \infty) \times \Omega \times \R$  \cite[Ex.~1.13,~Ch.~6]{revuz_yor} then gives that
\begin{equation}\label{eq:key_local_time}
    \ell_t^0(\Upsilon)
    = \lim_{\varepsilon \downarrow 0} \int_{\R^+} \int_0^t \varphi_{\varepsilon}(\Upsilon(s, y)) (\partial_x \Upsilon(s,y))^2 \di \ell_s^y(X) \dy
\end{equation}
for all $t\geq 0$ (almost surely). From here, the claim is simply that the limit as $\varepsilon \rightarrow 0$ equals the desired expression $\int_0^t \partial^+_x \Upsilon(s,0) \di \ell_s^0(X)$. This is intuitive, but it becomes a little delicate to get to a point where we can do a change of variables $z=\Upsilon(s,y)$ to achieve the conclusion.
	
The local time of $X$ at $y$ is non-decreasing, hence of finite variation, and (almost surely) it is continuous in time (see e.g.~\cite[Ch.~6]{revuz_yor}). Thus, for all $y \in \R^+$, the aforementioned integral against $\ell^y(X)$ on $[0,t]$ is a Riemann--Stieltjes integral given by the limit of its approximating sums along arbitrary partitions $\left\{\pi_n = \{s_0, s_1, \dots, s_n\}\right\}_{n \in \N}$ of $[0,t]$. Adding and subtracting the Riemann sums for the Riemann--Stieltjes integrals $\int_0^t  \partial^+_x \Upsilon(s,0) \di \ell_s^0(X)$ and  $\int_0^t \varphi_{\varepsilon}(\Upsilon(s, y))  (\partial_x \Upsilon(s,y))^2 \di \ell_s^y(X)$, we can write down the following equality
\begin{equation}\label{eq:conv_local_time}
    \int_0^t \partial^+_x \Upsilon(s,0) \di \ell_s^0(X) = \int_{\R^+} \int_0^t \varphi_{\varepsilon}(\Upsilon(s, y)) (\partial_x \Upsilon(s,y))^2\di \ell_s^y(X) \dy + S_1(n) + S_2(n, \varepsilon) + S_3(n, \varepsilon),
\end{equation}
where we have defined
\begin{align*}
    S_1(n)
    &:= \int_0^t \partial^+_x \Upsilon(s,0) \di \ell_s^0(X) - \sum_{i =0}^{n-1} \partial^+_x \Upsilon(s_i,0)(\ell^0_{s_{i+1}} - \ell^0_{s_i}), \\
    S_2(n, \varepsilon)
    &:= \int_{\R^+} \sum_{i = 0}^{n-1} \varphi_{\varepsilon}(\Upsilon(s_i, y)) (\partial_x \Upsilon(s_i,y))^2 (\ell^y_{s_{i+1}} - \ell^y_{s_i}) \dy \\&\qquad\qquad- \int_{\R^+} \int_0^t \varphi_{\varepsilon}(\Upsilon(s, y)) (\partial_x \Upsilon(s,y))^2 \di \ell_s^y(X) \dy, \\
    S_3(n, \varepsilon)
    &:= \sum_{i=0}^{n-1} \partial^+_x \Upsilon(s_i,0)(\ell^0_{s_{i+1}} - \ell^0_{s_i}) - \int_{\R^+} \sum_{i = 0}^{n-1} \varphi_{\varepsilon}(\Upsilon(s_i, y)) (\partial_x \Upsilon(s_i,y))^2 (\ell^y_{s_{i+1}} - \ell^y_{s_i}) \dy.
\end{align*}
	
Let $\delta > 0$ be given. By definition of the Riemann--Stieltjes integral, there exists $N_1(\delta) \in \N$ such that for all $n > N_1(\delta)$ we have
\begin{equation}\label{eq:bound_A1}
    |S_1(n) | = \left| \int_0^t \partial^+_x \Upsilon(s,0) \di \ell_s^0(X) -  \sum_{i=0}^{n-1} \partial^+_x \Upsilon(s_i,0)(\ell^0_{s_{i+1}}(X) - \ell^0_{s_i}(X)) \right| \le \delta.
\end{equation}
	
Now consider $|S_2(n, \varepsilon)|$. Using linearity of the integrals, we can rewrite
\begin{align*}
    S_2(n, \varepsilon) 
    &= \int_{\R^+} \biggl(\, \sum_{i = 0}^{n-1} \varphi_{\varepsilon}(\Upsilon(s_i, y)) (\partial_x \Upsilon(s_i,y))^2 (\ell^y_{s_{i+1}} - \ell^y_{s_i}) \\ &\qquad\qquad- \int_0^t \varphi_{\varepsilon}(\Upsilon(s, y)) (\partial_x \Upsilon(s,y))^2 \di \ell_s^y(X) \biggr) \dy   \\
    &=  \int_{\R^+} \sum_{i = 0}^{n-1} \int_{s_{i}}^{s_{i+1}} \varphi_{\varepsilon}(\Upsilon(s_i, y)) (\partial_x \Upsilon(s_i,y))^2 \di \ell_s^y(X)
    \\& \qquad\qquad- \sum_{i = 0}^{n-1} \int_{s_{i}}^{s_{i+1}} \varphi_{\varepsilon}(\Upsilon(s, y)) (\partial_x \Upsilon(s,y))^2 \di \ell_s^y(X)  \dy  .
\end{align*}
Since $\ell^y_t$ is an increasing process, $\mathrm{TV}_{[0,t]}(\ell^y) = \ell^y_t$, and so we have
\begin{align*}
    |S_2(n, \varepsilon) | 
    &\le \int_{\R^+}  \sum_{i = 0}^{n-1} \int_{s_{i}}^{s_{i+1}} \left| \varphi_{\varepsilon}(\Upsilon(s_i, y)) (\partial_x \Upsilon(s_i,y))^2 
    -  \varphi_{\varepsilon}(\Upsilon(s, y)) (\partial_x \Upsilon(s,y))^2 \right| \di \ell_s^y(X)  \dy.
\end{align*}
By Lemma \ref{lemma:lip}, and recalling from its proof that $\varphi_{\varepsilon}(\Upsilon(s,y))$ is supported on $y \in [0, \varepsilon /c]$, we get
\begin{align*}
    |S_2(n, \varepsilon) | 
    &\le \int_{\R^+}  \mathbbm{1}_{[0, \varepsilon /c]}(y) \sum_{i = 0}^{n-1} \int_{s_{i}}^{s_{i+1}} \tilde{C} \varepsilon^{-1}(s - s_i) \di \ell_s^y(X)  \dy \\
    &= \tilde{C} \varepsilon^{-1} \int_0^{\varepsilon /c} \left( \int_0^t s \di \ell^y_s(X)
    - \int_0^t \sum_{i = 0}^{n-1} s_i \mathbbm{1}_{[s_i, s_{i+1}]}(s) \di \ell_s^y(X)  \right) \dy \\
    &\le \tilde{C} \varepsilon^{-1} \int_0^{\varepsilon /c}  \sup_{y \in \R^+} \ell_t^y  \, \left( \sup_{s \in [0,t]} \left| s -  \sum_{i = 0}^{n-1} s_i \mathbbm{1}_{[s_i, s_{i+1}]}(s)  \right| \right) \dy
\end{align*}
From Barlow--Yor's BDG type inequality for local times \cite[Ch.~XI,~Thm.~2.4]{revuz_yor} (see \cite[Page~199]{barlow_yor} for the case of semimartingales), the expectation of $L : = \sup_{y \in \R^+} \ell_t^y$ is finite. In particular, $L$ is finite almost surely. Moreover, we can always find a partition $\pi_n$ of $[0,t]$ fine enough that $\sup_{s \in [0,t]} | s -  \sum_{i = 0}^{n-1} s_i \mathbbm{1}_{[s_i, s_{i+1}]}(s)|$ is as small as we like: for $\delta > 0$, there exists $N_2(\delta)$ such that
\begin{equation*}
    \sup_{s \in [0,t]} \left| s -  \sum_{i = 0}^{n-1} s_i \mathbbm{1}_{[s_i, s_{i+1}]}(s)  \right| \le \delta, \quad \forall n > N_2(\delta).
\end{equation*}
Hence we conclude that, for all $n > N_2(\delta)$,
\begin{equation}\label{eq:bound_A2}
    |S_2(n, \varepsilon) | \le \tilde{C} c^{-1} L \delta,
\end{equation}
and we know the right-hand side is almost-surely finite.
	
Finally, consider $|S_3(n, \varepsilon)|$. Applying the change of variables $z = \Upsilon(s_i, y)$, and using (a) and (c) from Assumption \ref{assump:re-scaling_map} as well as the inverse function theorem, we get
\begin{equation*}
    S_3(n, \varepsilon)=  \sum_{i=0}^{n-1} \left( \partial^+_x \Upsilon(s_i,0)(\ell^0_{s_{i+1}} - \ell^0_{s_i}) -   \int_{\R^+}  \!\varphi_{\varepsilon}(z)   \partial_x \Upsilon(s_i,   \Upsilon^{-1}(s_i, z) ) (\ell^{\Upsilon^{-1}(s_i, z)}_{s_{i+1}} - \ell^{\Upsilon^{-1}(s_i, z)}_{s_i}) \di z  \right) .
\end{equation*}
Recalling that $\varphi_{\varepsilon}$ integrates to 1 and is supported only $[0, \varepsilon]$, we can then estimate
\begin{align*}
    |S_3(n, \varepsilon)| 
    &= \left| \int_0^{\varepsilon} \varphi_{\varepsilon}(z) \sum_{i=0}^{n-1} \left( \partial^+_x \Upsilon(s_i,0)(\ell^0_{s_{i+1}} - \ell^0_{s_i}) -  \partial_x \Upsilon(s_i,   \Upsilon^{-1}(s_i, z) )(\ell^{\Upsilon^{-1}(s_i, z)}_{s_{i+1}} - \ell^{\Upsilon^{-1}(s_i, z)}_{s_i}) \right) \di z   \right| \\
    &\le \int_0^{\varepsilon} \varphi_{\varepsilon}(z) \sum_{i=0}^{n-1} \left| \partial_x \Upsilon(s_i,   \Upsilon^{-1}(s_i, 0) ) \ell^{\Upsilon^{-1}(s_i, 0)}_{s_{i+1}} - \partial_x \Upsilon(s_i, \Upsilon^{-1}(s_i, z) ) \ell^{\Upsilon^{-1}(s_i, z)}_{s_{i+1}} \right| \di z \\
    &\quad + \int_0^{\varepsilon} \varphi_{\varepsilon}(z) \sum_{i=0}^{n-1} \left| \partial_x \Upsilon(s_i,   \Upsilon^{-1}(s_i, 0) ) \ell^{\Upsilon^{-1}(s_i, 0)}_{s_i} -  \partial_x \Upsilon(s_i,   \Upsilon^{-1}(s_i, z) ) \ell^{\Upsilon^{-1}(s_i, z)}_{s_i} \right| \di z
\end{align*}
Since $y \mapsto \ell_t^y(X)$ is c\`{a}dl\`{a}g, and $z \mapsto \Upsilon^{-1}(s,z)$ is continuous and strictly increasing, by Assumption \ref{assump:re-scaling_map}(a), we have that $z \mapsto \ell_t^{\Upsilon^{-1}(s,z)}(X) $ is right-continuous. Likewise, $z \mapsto \partial_x \Upsilon(s,   \Upsilon^{-1}(s, z))$ will be right-continuous on a right-neighbourhood of zero, by Assumption \ref{assump:re-scaling_map}(c). In particular, their product is right continuous, for all small enough $z$, so, given $\delta/n$, there exists $\beta(\delta) $ such that
\begin{equation*}
    |z| \le \beta(\delta) \;\;  \text{implies}\;\; |\partial_x \Upsilon(s_i,   \Upsilon^{-1}(s_i, z)) \ell_t^{\Upsilon^{-1}(s_i, z)}(X) - \partial_x \Upsilon(s_i,   \Upsilon^{-1}(s_i, 0))\ell_t^{\Upsilon^{-1}(s_i, 0)}(X)| \le \frac{\delta}{n},
\end{equation*}
for all $t\geq 0$. Then it follows that, for all $\varepsilon \in [0, \beta(\delta)]$, we have
\begin{equation}\label{eq:bound_A3}
    |S_3(n,\varepsilon) | \le 2 \delta, \quad \forall n \in \N.
\end{equation}
	
From the expression \eqref{eq:conv_local_time} and the bounds \eqref{eq:bound_A1}, \eqref{eq:bound_A2} and \eqref{eq:bound_A3} for $S_1$, $S_2$, and $S_3$, on the right-hand side, we conclude that for all $\delta > 0$ and all $n \in \N$ such that $n \ge N(\delta)= \max\{ N_1(\delta), N_2(\delta)\}$, there exists $\beta(\delta) > 0$ such that
\begin{equation*}
    \left| \int_0^t \partial^+_x \Upsilon(s,0) \di \ell_s^0(X) - \int_{\R^+} \int_0^t \varphi_{\varepsilon}(\Upsilon(s, y)) (\partial_x \Upsilon(s,y))^2 \di \ell_s^y(X) \dy \right| \le (3 + \tilde{C} c^{-1} L ) \delta \quad \forall \varepsilon \in [0, \beta(\delta)].
\end{equation*}
Taking the limit as $\delta \rightarrow 0$, and noting that $\varepsilon \rightarrow 0$ at the same time, we conclude that the right-hand side of \eqref{eq:key_local_time} converges to the intended limit (almost surely). This completes the proof. 
\end{proof}

\section{Martingale properties of the infected proportion}\label{sect:martingale_prop}

In this section, we develop the martingale machinery for the infected proportion $I^n$ and give the proof of Theorem \ref{Thm:convergence_Barnes}. Our arguments exploit the exact specification of the infection mechanism as well as our construction of the particle system and its auxiliary systems in Section \ref{sec:existence_sys}.

\subsection{Compensated martingality of the infected proportion}

Recall from Theorem \ref{thm:existence} that the infection times $\tau^{i}$ have the conditional laws
\begin{equation}\label{eq:cond_exp_dist}
    \Prob(\tau^{i}\geq s\mid\hat{\mathcal{F}}_{t}^{i,n})=1-e^{-\int_{0}^{s}\gamma(r,\hat{C}_{r}^{n,(-i)})\di \hat{\ell}_{r}^{i,n}},\quad s\leq t,
\end{equation}
for each $i=1,\ldots,n$, where we also recall that 
\begin{equation}\label{eq:recall_reduced_filtration}
    \hat{\mathcal{F}}_{t}^{i,n}=\sigma\bigl((X_{0}^{i},B_{s}^{i}),(X_{0}^{j},B_{s}^{j},\{\chi^{j,(k)}\}_{k=1}^{n}):s\in[0,t],\,j\in\{1,\ldots,n\}\!\setminus\!\{i\}\bigr)
\end{equation}
for each $i=1,\ldots,n$. In addition to this, we shall consider
the filtrations $\mathcal{I}^{i,n}$, given by
\begin{equation}\label{eq:I_filtration}
    \mathcal{I}_{t}^{i,n}:=\sigma\bigl(\{s<\tau^i\}:s\in[0,t]\bigr),\quad t\geq0,
\end{equation}
for $i=1,\ldots, n$. Note that  $\mathcal{I}_{t}^{i,n}$ reveals the infection status of the $i$'th particle up to time $t$,
but nothing else. Adding this information to $\hat{\mathcal{F}}^{i,n}$,
we define 
\begin{equation}\label{eq:F_i_filtration}    \mathcal{F}_{t}^{i,n}:=\hat{\mathcal{F}}_{t}^{i,n}\lor\mathcal{I}_{t}^{i,n},\quad t\geq0,
\end{equation}
for $i=1,\ldots, n$. Crucially, $\hat{\mathcal{F}}^{i,n}_t$ itself is not rich enough to reveal whether or not particle $i$ has been infected up to time $t$, and hence it also cannot distinguish if the other particles have been infected up that time, since they are coupled to the infection status of particle $i$.

We first derive two basic lemmas which relate the filtration $\mathcal{F}^{i,n}$ to the subfiltration $\hat{\mathcal{F}}^{i,n}$ for certain conditional expectations involving the infection times. Aside from the interacting aspects that we deal with in this section, and the corresponding form of \eqref{eq:cond_exp_dist} and the filtrations, these lemmas are analogues of the key building blocks in the theory of hazard processes from the mathematical credit risk literature; see \cite[Ch.~7]{jeanblanc} and \cite[Ch.~3]{BieleckiJeanblancRutkowski_2009}.

\begin{lemma}\label{lem:compensator1}
Fix $i\in\{1,\ldots,n\}$, and let $\mathcal{F}^{i,n}$ and $\hat{\mathcal{F}}^{i,n}$ be the filtrations defined in \eqref{eq:F_i_filtration} and \eqref{eq:recall_reduced_filtration}, respectively. For any random variable $Y\in L^{1}(\Omega,\Prob)$, defined on the same probability space as the particle system, it holds for all $s\geq0$ that
\[
    \E[Y\mathbbm{1}_{s<\tau^i}\mid\mathcal{F}_{s}^{i,n}]=e^{\int_{0}^{s}\gamma(r,\hat{C}_{r}^{n,(-i)})\di \hat{\ell}_{r}^{i,n}}\E\bigl[\,Y\mathbbm{1}_{s<\tau^i}\mid\hat{\mathcal{F}}_{s}^{i,n}\bigr]\mathbbm{1}_{s<\tau^i}.
\] 
\end{lemma}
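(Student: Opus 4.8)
The plan is to establish the identity by a standard "hazard process" computation, adapted to the present setting where the filtration enlargement adds exactly the infection-status information $\pazocal{I}^{i,n}$ and the compensator is the local-time integral appearing in \eqref{eq:cond_exp_dist}. First I would reduce to computing $\mathbb{E}[Y\mathbbm{1}_{s<\tau^i}\mid\pazocal{F}_s^{i,n}]$ on the event $\{s<\tau^i\}$ only: on $\{s\ge\tau^i\}$ both sides vanish because of the factor $\mathbbm{1}_{s<\tau^i}$, so the asserted equality is trivially true there. On $\{s<\tau^i\}$, the $\sigma$-algebra $\pazocal{I}_s^{i,n}$ contains no information beyond "particle $i$ is not yet infected", so a monotone-class argument shows that any $\pazocal{F}_s^{i,n}$-measurable random variable agrees, on $\{s<\tau^i\}$, with an $\hat{\pazocal{F}}_s^{i,n}$-measurable one. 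Concretely, for any $\hat A\in\hat{\pazocal{F}}_s^{i,n}$, I would verify that
\[
\mathbb{E}\bigl[\mathbbm{1}_{\hat A}\,\mathbbm{1}_{s<\tau^i}\,\mathbb{E}[Y\mathbbm{1}_{s<\tau^i}\mid\pazocal{F}_s^{i,n}]\bigr]
=\mathbb{E}\bigl[\mathbbm{1}_{\hat A}\,\mathbbm{1}_{s<\tau^i}\,Y\mathbbm{1}_{s<\tau^i}\bigr]
=\mathbb{E}\bigl[\mathbbm{1}_{\hat A}\,Y\mathbbm{1}_{s<\tau^i}\bigr],
\]
since $\{s<\tau^i\}$ absorbs its own square, and this is the defining property that pins down the conditional expectation on the relevant event.

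The heart of the matter is then to pass from conditioning on $\pazocal{F}_s^{i,n}$ to conditioning on $\hat{\pazocal{F}}_s^{i,n}$ and to see the exponential prefactor emerge. The tool is the general "key lemma" of hazard-process theory: for any $U\in L^1$ that is $\hat{\pazocal{F}}_s^{i,n}$-measurable, one has $\mathbb{E}[U\mathbbm{1}_{s<\tau^i}\mid\pazocal{F}_s^{i,n}]=\mathbbm{1}_{s<\tau^i}\,U\,\dfrac{\mathbb{E}[\mathbbm{1}_{s<\tau^i}\mid\hat{\pazocal{F}}_s^{i,n}]^{-1}\text{-type ratio}}{}$; more precisely, using \eqref{eq:cond_exp_dist} with $s=t$, namely $\mathbb{P}(\tau^i> s\mid\hat{\pazocal{F}}_s^{i,n})=e^{-\int_0^s\gamma(r,\hat C_r^{n,(-i)})\di\hat\ell_r^{i,n}}$, I would show
\[
\mathbb{E}[Y\mathbbm{1}_{s<\tau^i}\mid\pazocal{F}_s^{i,n}]
=\mathbbm{1}_{s<\tau^i}\,\frac{\mathbb{E}[Y\mathbbm{1}_{s<\tau^i}\mid\hat{\pazocal{F}}_s^{i,n}]}{\mathbb{P}(\tau^i> s\mid\hat{\pazocal{F}}_s^{i,n})}
=\mathbbm{1}_{s<\tau^i}\,e^{\int_0^s\gamma(r,\hat C_r^{n,(-i)})\di\hat\ell_r^{i,n}}\,\mathbb{E}\bigl[Y\mathbbm{1}_{s<\tau^i}\mid\hat{\pazocal{F}}_s^{i,n}\bigr].
\]
The first equality here is exactly the content one checks by testing against generators of $\pazocal{F}_s^{i,n}$: sets of the form $\hat A$ and $\hat A\cap\{s<\tau^i\}$ for $\hat A\in\hat{\pazocal{F}}_s^{i,n}$, with the latter handled as in the previous paragraph and the former requiring one to write $\mathbb{E}[\mathbbm{1}_{\hat A}\mathbbm{1}_{s<\tau^i}Y\mathbbm{1}_{s<\tau^i}]=\mathbb{E}[\mathbbm{1}_{\hat A}\,\mathbb{E}[Y\mathbbm{1}_{s<\tau^i}\mid\hat{\pazocal F}_s^{i,n}]]$ and insert $\mathbb{P}(\tau^i> s\mid\hat{\pazocal F}_s^{i,n})$ as the normalising factor so both sides balance.

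The main obstacle I anticipate is \emph{not} the algebra of conditional expectations, which is routine once the generating sets are identified, but rather justifying the "key lemma" step rigorously in this setting: one must be careful that $\hat\ell^{i,n}$ and $\hat C^{n,(-i)}$ are genuinely $\hat{\pazocal{F}}^{i,n}$-adapted (this is asserted in Theorem \ref{thm:existence} and Proposition \ref{prop:artificial_tagged_i}, so I would cite it) and that the only randomness separating $\tau^i$ from $\hat{\pazocal{F}}_s^{i,n}$ is the single exponential $\chi^{i,n}$, independent of $\hat{\pazocal{F}}^{i,n}$ — this is precisely the structure built into the construction of Section \ref{sec:existence_sys} and encoded in \eqref{eq:tau_i_prob}. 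Given that independence and the explicit form of the conditional survival function, the conditional law of $\tau^i$ given $\hat{\pazocal F}_s^{i,n}$ on $\{s<\tau^i\}$ is determined, and the identity follows. I would therefore open the proof by recalling these adaptedness facts, then do the monotone-class reduction to the two families of generating sets, and close with the normalisation computation that produces the exponential factor.
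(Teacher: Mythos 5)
Your argument matches the paper's proof essentially step for step: on $\{s\ge\tau^i\}$ both sides vanish; on $\{s<\tau^i\}$ every generator $A\cap E$ of $\pazocal{F}_s^{i,n}$ (with $A\in\hat{\pazocal{F}}_s^{i,n}$, $E\in\pazocal{I}_s^{i,n}$) intersects $\{s<\tau^i\}$ in either $\emptyset$ or $A\cap\{s<\tau^i\}$, so the conditional expectation restricted to $\{s<\tau^i\}$ is measurable for the trace $\sigma$-algebra $\hat{\pazocal{F}}_s^{i,n}\!\!\upharpoonright_{\{s<\tau^i\}}$, and the standard restricted-conditional-expectation formula gives the ratio $\mathbb{E}[Y\mathbbm{1}_{s<\tau^i}\mid\hat{\pazocal{F}}_s^{i,n}]/\mathbb{P}(\tau^i>s\mid\hat{\pazocal{F}}_s^{i,n})$, after which \eqref{eq:cond_exp_dist} produces the exponential. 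One small remark: your closing paragraph's worry about re-verifying adaptedness of $\hat\ell^{i,n}$, $\hat C^{n,(-i)}$ and the independence of $\chi^{i,n}$ is superfluous here — all of that has already been bottled into \eqref{eq:cond_exp_dist}, and the lemma's proof is a purely abstract conditional-expectation manipulation that consumes \eqref{eq:cond_exp_dist} as a black box (plus the fact that the survival probability is a.s.\ positive, which the explicit exponential form makes evident).
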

\begin{proof}
Firstly, the event $\{s<\tau^i\}$ is an element of $\mathcal{I}_{s}^{i,n}$, so we have 
\[
    \E[Y\mathbbm{1}_{s<\tau^i}\mid\mathcal{F}_{s}^{i,n}]=\E[Y\mathbbm{1}_{s<\tau^i}\mid\mathcal{F}_{s}^{i,n}]\mathbbm{1}_{s<\tau^i}
\]
Next, noting that the $\sigma$-algebra $\mathcal{F}_{s}^{i,n}$ is generated by events of the form $A\cap E$ for $A\in\hat{\mathcal{F}}_{s}^{i,n}$ and $E\in\mathcal{I}_{s}^{i,n}$, and noting that any such intersection satisfies $A\cap E\cap\{s<\tau^i\}=\emptyset$ or $A\cap E\cap\{s<\tau^i\}=A\cap\{s<\tau^i\}$, we deduce that the restriction $\E[Y\mathbbm{1}_{s<\tau^i}\mid\mathcal{F}_{s}^{i,n}]\!\!\upharpoonright_{\{s<\tau^i\}}$ is measurable for the restricted $\sigma$-algebra $\mathcal{\hat{F}}_{s}^{i,n}\!\!\upharpoonright_{\{s<\tau^i\}}$.
From this, it is easy to verify that $\E[Y\mathbbm{1}_{s<\tau^i}\mid\mathcal{F}_{s}^{i,n}]\!\!\upharpoonright_{\{s<\tau^i\}}$ satisfies the definition of $\E[Y\!\!\upharpoonright_{\{s<\tau^i\}}\mid\mathcal{\hat{F}}_{s}^{i,n}\!\!\upharpoonright_{\{s<\tau^i\}}]$, and hence we get
\begin{align*}
    \E[Y\mathbbm{1}_{s<\tau^i}\mid\mathcal{F}_{s}^{i,n}] & =\E[Y\!\!\upharpoonright_{\{s<\tau^i\}}\mid\mathcal{\hat{F}}_{s}^{i,n}\!\!\upharpoonright_{\{s<\tau^i\}}]\mathbbm{1}_{s<\tau^i}\\
    & =\frac{\E[Y\mathbbm{1}_{s<\tau^i}\mid\mathcal{\hat{F}}_{s}^{i,n}]}{\Prob(s<\tau^i\mid\mathcal{\hat{F}}_{s}^{i,n})}\mathbbm{1}_{s<\tau^i}=e^{\int_{0}^{s}\gamma(r,\hat{C}_{r}^{n,(-i)})\di \hat{\ell}_{r}^{i,n}}\E[Y\mathbbm{1}_{s<\tau^i}\mid\mathcal{\hat{F}}_{s}^{i,n}]\mathbbm{1}_{s<\tau^i}
\end{align*}
where the last equality comes from \eqref{eq:cond_exp_dist}. This completes the proof.
\end{proof}

We have the following consequence for the random variable obtained by stopping a stochastic process at one of the infection times.
\begin{lemma}\label{lem:compensator2}
Fix $i\in\{1,\ldots,n\}$. Let $\mathcal{F}^{i,n}$ and $\hat{\mathcal{F}}^{i,n}$ be given by  \eqref{eq:F_i_filtration} and \eqref{eq:recall_reduced_filtration}, and let $(Y_{t})_{t \geq 0}$ be a left-continuous process adapted to $(\hat{\mathcal{F}}_{t}^{i,n})_{t\geq 0}$. For any $0\leq s \leq t$, if $\E[\sup_{r\in [s,t]}|Y_{r}|]<\infty$, then 
\[
    \E[Y_{\tau^i\land t}\mathbbm{1}_{s<\tau^i}\mid\mathcal{F}_{s}^{i,n}]=\E\left[\int_{s}^{t}Y_{r}\gamma(r,\hat{C}_{r}^{n,(-i)})e^{-\int_{s}^{r}\gamma(\theta,\hat{C}_{\theta}^{n,(-i)})\di \hat{\ell}_{\theta}^{i,n}}\di \hat{\ell}_{r}^{i,n} + \tilde{Y}_t \mid \hat{\mathcal{F}}_{s}^{i,n}\right]\mathbbm{1}_{s<\tau^i},
\]
where $\tilde{Y}_t:=Y_t\,e^{-\int_{s}^{t}\gamma(\theta,\hat{C}_{\theta}^{n,(-i)})\di \hat{\ell}_{\theta}^{i,n}}$.
\end{lemma}
\begin{proof}
By Lemma \ref{lem:compensator1} applied to the random variable $Y_{\tau^i\land t}\in L^{1}(\Omega,\Prob)$, we have
\begin{equation}\label{eq:predict_marting}
    \E[Y_{\tau^i\land t}\mathbbm{1}_{s<\tau^i}\mid\mathcal{F}_{s}^{i,n}]=e^{\int_{0}^{s}\gamma(r,\hat{C}_{r}^{n,(-i)})\di \hat{\ell}_{r}^{i,n}}\E\bigl[Y_{\tau^i\land t}\mathbbm{1}_{s<\tau^i}\mid\hat{\mathcal{F}}_{s}^{i,n}\bigr]\mathbbm{1}_{s<\tau^i}.
\end{equation}
By left-continuity and adaptedness, we can approximate $Y$ by simple left-continuous processes
\[
    Y_{r}^{(k)}:=\sum_{l=1}^{k}A_{l}^{(k)}
    \mathbbm{1}_{s_{l}<r\leq s_{l+1}},
\]
on $[s,t]$, where each $A_{l}^{(k)}$ is $\hat{\mathcal{F}}_{s_{l}}^{i,n}$-measurable with $s_1=s$ and $s_{k+1}=t$. The tower law gives
    \[	\E\left[Y_{\tau^i\land t}^{(k)}\mathbbm{1}_{s<\tau^i}\mid\hat{\mathcal{F}}_{s}^{i,n}\right]=\sum_{l=1}^{k-1}\E\left[A_{l}^{(k)}\mathbb{E}\bigl[\mathbbm{1}_{s_{l}<\tau^i \leq s_{l+1}}\mid\hat{\mathcal{F}}_{s_{l}}^{i,n}\bigr]\mid\hat{\mathcal{F}}_{s}^{i,n}\right]+\E\left[A_{k}^{(k)}\mathbb{E}\bigl[ \mathbbm{1}_{s_k<\tau^i}\mid\hat{\mathcal{F}}_{s_k}^{i,n}\bigr]\mid\hat{\mathcal{F}}_{s}^{i,n}\right].
\]
Exploiting the tower law again, \eqref{eq:cond_exp_dist} and the chain rule for Stieltjes integrals gives
\begin{align*}
    \mathbb{P}( s_{l}<\tau^i \leq s_{l+1}\mid\hat{\mathcal{F}}_{s_{l}}^{i,n}) & =\E\left[e^{-\int_{0}^{s_{l}}\gamma(\theta,\hat{C}_{\theta}^{n,(-i)})\di \hat{\ell}_{\theta}^{i,n}} - e^{-\int_{0}^{s_{l+1}}\gamma(\theta,\hat{C}_{\theta}^{n,(-i)})\di \hat{\ell}_{\theta}^{i,n}}\mid\hat{\mathcal{F}}_{s_{l}}^{i,n}\right]
    \\
    &=\E\left[\int_{s_l}^{s_{l+1}}\gamma(r,\hat{C}_{r}^{n,(-i)}) e^{-\int_{0}^{r}\gamma(\theta,\hat{C}_{\theta}^{n,(-i)})\di \hat{\ell}_{\theta}^{i,n}} \di \hat{\ell}_{r}^{i,n}\mid\hat{\mathcal{F}}_{s_{l}}^{i,n}\right].
\end{align*}
Since each $A_{l}$ is $\hat{\mathcal{F}}_{s_{l}}^{i,n}$-measurable,
we thus arrive at
\begin{align*}
\E\left[Y_{\tau^i\land t}^{(k)}\mathbbm{1}_{s<\tau^i}\mid\hat{\mathcal{F}}_{s}^{i,n}\right]
    =\E\left[\int_{s}^{s_k}Y_{r}^{(k)}\gamma(r,\hat{C}_{r}^{n,(-i)})e^{-\int_{0}^{r}\gamma(\theta,\hat{C}_{\theta}^{n,(-i)})\di \hat{\ell}_{\theta}^{i,n}}\di \hat{\ell}_{r}^{i,n} + \tilde{A}^{(k)}_k \mid\hat{\mathcal{F}}_{s}^{i,n}\right]
\end{align*}
with $\tilde{A}^{(k)}_k:=A^{(k)}_{k}\,e^{-\int_{0}^{s_{k}}\gamma(\theta,\hat{C}_{\theta}^{n,(-i)})\di \hat{\ell}_{\theta}^{i,n}}$. By $\E[\sup_{r\leq t}|Y_{r}|]<\infty$, we can apply dominated convergence as $k\rightarrow \infty$ on both sides of the above expression. From this and \eqref{eq:predict_marting}, the conclusion follows.
\end{proof}
We are now in a position to verify the martingale property of the infected proportion. Combining the individual infection filtrations from \eqref{eq:I_filtration}, we define
\[
    \mathcal{I}_{t}^{n}=\sigma(\{s<\tau^{j}\}:s\in[0,t],\,j\in\{1,\ldots,n\}).
\]
Let $\mathcal{D}^{n}$ be the filtration generated by the initial points and the Brownian drivers, namely
\[
    \mathcal{D}_{t}^{n}:=\sigma(X_{0}^{j},B_{s}^{j}:s\in[0,t],\,j\in\{1,\ldots,n\}).
\]
We then define the combined filtration
\begin{equation}
    \mathcal{\bar{{F}}}_{t}^{n}:=\mathcal{D}_{t}^{n}\lor\mathcal{I}_{t}^{n},\label{eq:Filtration_F_bar}
\end{equation}
which we note has enough information to reconstruct the particle system up to time $t$, without revealing anything about the future. The following result is the first part of Theorem \ref{Thm:convergence_Barnes}.
\begin{prop}[Martingale property]\label{prop:Infected_proportion_martingale}
For all $n\geq1$, the stochastic process $(M_{t}^{n})_{t\geq0}$ given by
\[
    M_{t}^{n}:=I_{t}^{n}-V_t^n,\quad V_t^n=\frac{1}{n}\sum_{i=1}^{n}\int_{0}^{t}\mathbbm{1}_{s<\tau^i}\gamma(s,C_{s}^{n})\di \ell_{s}^{i,n},
\]
is a martingale with respect to the filtration $(\bar{\mathcal{F}}_{t}^{n})_{t\geq0}$ defined in \eqref{eq:Filtration_F_bar}.
\end{prop}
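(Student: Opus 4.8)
The plan is to prove the martingale property one particle at a time and then assemble by linearity. Write $M^n_t = \tfrac1n\sum_{i=1}^n N^{i,n}_t$ with $N^{i,n}_t := \mathbbm{1}_{\tau^i\leq t} - \int_0^t \mathbbm{1}_{s<\tau^i}\gamma(s,C^n_s)\di\ell^{i,n}_s$ (the summand of $nV^n$, cf. \eqref{eq:average_local_time}); it then suffices to show that each $N^{i,n}$ is a $(\bar{\pazocal{F}}^n_t)$-martingale. The first move is to re-express $N^{i,n}$ in terms of the artificial system $\hat{\mathbf{X}}^{n,(-i)}$. By the construction in Section \ref{sec:existence_sys} (Propositions \ref{prop:evol_x} and \ref{prop:artificial_tagged_i}), on $\{s<\tau^i\}$ the true particle, front, local time and current contagiousness coincide pathwise with those of $\hat{\mathbf{X}}^{n,(-i)}$, so that $\mathbbm{1}_{s<\tau^i}\gamma(s,C^n_s)\di\ell^{i,n}_s = \mathbbm{1}_{s<\tau^i}\gamma(s,\hat{C}^{n,(-i)}_s)\di\hat{\ell}^{i,n}_s$ and hence
\[
N^{i,n}_t = \mathbbm{1}_{\tau^i\leq t} - \hat{\Gamma}^i_{t\wedge\tau^i}, \qquad \hat{\Gamma}^i_t := \int_0^t \gamma(s,\hat{C}^{n,(-i)}_s)\di\hat{\ell}^{i,n}_s .
\]
Here $\hat{\Gamma}^i$ is continuous, non-decreasing and $\hat{\pazocal{F}}^{i,n}$-adapted, with $\mathbb{E}[\hat{\Gamma}^i_t]<\infty$ for each $t$ (since $\gamma$ is continuous and $\mathbb{E}[\sup_y\hat{\ell}^y_t]<\infty$ by the Barlow--Yor bound), so $N^{i,n}$ is integrable.

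Next I would show that $N^{i,n}$ is a martingale for the per-particle filtration $\pazocal{F}^{i,n}=\hat{\pazocal{F}}^{i,n}\lor\pazocal{I}^{i,n}$, which is precisely the setting of Lemmas \ref{lem:compensator1} and \ref{lem:compensator2} and of \eqref{eq:tau_i_prob}. Fix $s\leq t$. On $\{\tau^i\leq s\}\in\pazocal{F}^{i,n}_s$ the increment $N^{i,n}_t-N^{i,n}_s$ vanishes, so it is enough to evaluate $\mathbb{E}[(N^{i,n}_t-N^{i,n}_s)\mathbbm{1}_{s<\tau^i}\mid\pazocal{F}^{i,n}_s]$, where on $\{s<\tau^i\}$ one has $N^{i,n}_t-N^{i,n}_s = \mathbbm{1}_{s<\tau^i\leq t}-(\hat{\Gamma}^i_{t\wedge\tau^i}-\hat{\Gamma}^i_s)$. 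Applying Lemma \ref{lem:compensator2} with $Y_r:=\mathbbm{1}_{r\leq t}$ to the first term and with $Y_r:=\hat{\Gamma}^i_{t\wedge r}-\hat{\Gamma}^i_s$ to the finite part of the second term, and Lemma \ref{lem:compensator1} to the contribution of $\{\tau^i=\infty\}$, each of the resulting $\hat{\pazocal{F}}^{i,n}_s$-conditional expressions collapses, after the substitution $u=\hat{\Gamma}^i_r-\hat{\Gamma}^i_s$ and an elementary integration, to $\mathbb{E}[1-e^{-(\hat{\Gamma}^i_t-\hat{\Gamma}^i_s)}\mid\hat{\pazocal{F}}^{i,n}_s]\mathbbm{1}_{s<\tau^i}$, so the two contributions cancel exactly. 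Hence $\mathbb{E}[N^{i,n}_t\mid\pazocal{F}^{i,n}_s]=N^{i,n}_s$; i.e. $N^{i,n}$ is the compensated ``hazard/Cox'' martingale attached to \eqref{eq:tau_i_prob}.

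The last step is to pass from $\pazocal{F}^{i,n}$ to the coarser filtration $\bar{\pazocal{F}}^n=\pazocal{D}^n\lor\pazocal{I}^n$. The key point is that $\bar{\pazocal{F}}^n_s\subseteq\pazocal{F}^{i,n}_s$ for every $i$ and $s$: indeed $\pazocal{D}^n_s\subseteq\hat{\pazocal{F}}^{i,n}_s$, and, given the Brownian drivers $(B^j)_j$, the starting points $(X^j_0)_j$, the exponential clocks $\{\chi^{j,(k)}:j\neq i\}$ and the infection status $(\mathbbm{1}_{r<\tau^i})_{r\leq s}$ of particle $i$ itself, one can run the recursive construction of Section \ref{sec:recursive_construction} and recover every other infection time $\tau^j\wedge s$ --- at each intermediate step the next infection is either that of $i$ (read off from $\pazocal{I}^{i,n}$) or the earliest candidate time $\tilde{\varsigma}^{l,(k)}$ among the not-yet-infected particles $l\neq i$, which depends only on $(B^l,X^l_0,\chi^{l,(k)})$ and on the already-determined earlier infection times, while the moving boundaries at step $k$ depend only on those earlier times. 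Moreover $N^{i,n}$ is itself $\bar{\pazocal{F}}^n$-adapted, since $\tau^i$ is a $\bar{\pazocal{F}}^n$-stopping time and, on $[0,\tau^i)$, the true local time $\ell^{i,n}$ and the contagiousness $C^n$ are measurable functionals of $(X^i_0,B^i)$ and $(I^n_r)_{r\leq\cdot}$, hence of $\bar{\pazocal{F}}^n$. The tower property then yields $\mathbb{E}[N^{i,n}_t\mid\bar{\pazocal{F}}^n_s]=\mathbb{E}[\mathbb{E}[N^{i,n}_t\mid\pazocal{F}^{i,n}_s]\mid\bar{\pazocal{F}}^n_s]=\mathbb{E}[N^{i,n}_s\mid\bar{\pazocal{F}}^n_s]=N^{i,n}_s$, and averaging over $i$ finishes the proof.

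I expect the main obstacle to be this interplay between the three filtrations: the martingale identity for $N^{i,n}$ genuinely requires conditioning on $\pazocal{F}^{i,n}$, where \eqref{eq:tau_i_prob} and the compensator lemmas live, whereas the statement concerns $\bar{\pazocal{F}}^n$, so the containment $\bar{\pazocal{F}}^n_s\subseteq\pazocal{F}^{i,n}_s$ --- intuitively clear but genuinely relying on the fact that $\tau^i$ does not feed back into $\hat{\mathbf{X}}^{n,(-i)}$ --- has to be extracted carefully from the step-by-step construction. The computation in the middle step is otherwise routine, apart from some bookkeeping on $\{\tau^i=\infty\}$, which is invisible to the integral formula of Lemma \ref{lem:compensator2} and is why Lemma \ref{lem:compensator1} is also needed.
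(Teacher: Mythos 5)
Your proof is correct and follows essentially the same route as the paper's: decompose $M^n$ into per-particle compensated indicators, establish the martingale identity for each with respect to $\pazocal{F}^{i,n}=\hat{\pazocal{F}}^{i,n}\lor\pazocal{I}^{i,n}$ using the Cox-process structure of \eqref{eq:tau_i_prob} via Lemmas~\ref{lem:compensator1} and \ref{lem:compensator2}, verify that the two resulting conditional expectations coincide (you phrase it as a change of variables; the paper does the equivalent Lebesgue--Stieltjes integration by parts), and then descend to $\bar{\pazocal{F}}^n$ by the tower property. The one place where your write-up is more explicit than the paper is the containment $\bar{\pazocal{F}}^n_s\subseteq\pazocal{F}^{i,n}_s$, which the paper asserts in a single sentence but which genuinely relies on running the recursive construction with the $\pazocal{I}^{i,n}$-information standing in for the missing clocks $\{\chi^{i,(k)}\}_k$ of particle $i$; your step-by-step justification of this is accurate and worth having. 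Your explicit bookkeeping on $\{\tau^i=\infty\}$ is likewise a small but genuine tightening: the paper's use of Lemma~\ref{lem:compensator2} implicitly assumes that mass is carried by finite $\tau^i$, and separating the escape event via Lemma~\ref{lem:compensator1} is the clean way to close this.
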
      
\begin{proof}
Notice first that $M^{n}$ is indeed adapted to $(\mathcal{\bar{F}}_{t}^{n})_{t\geq0}$, since (i) the individual events $\{s<\tau^i\}$, for $s\leq t$, are in $\mathcal{\bar{F}}_{t}^{n}$ by virtue of $\mathcal{I}_{t}^{n}$, and (ii) combining these events with the information in $\mathcal{\hat{F}}_{t}^{n}$ is sufficient to reconstruct the full particle system with infection up to time $t$, so the processes $\ell^{i,n}$, $I^{n}$, and $C^n$ are also adapted, for $i=1,\ldots,n$. Next, fixing any $i\in\{1,\ldots,n\}$, we note that the filtration $\mathcal{F}_{t}^{i,n}$ can in fact be seen to contain all the events $\{s<\tau^{j}\}$, for $s\in[0,t]$ and $j\in\{1,\ldots,n\}$, so we deduce that $\bar{\mathcal{F}}_{t}^{n}$ is contained in $\mathcal{F}_{t}^{i,n}$ for each $i=1,\ldots,n$.
Write
\[
    V_t^n=\frac{1}{n}\sum_{i=1}^n V_t^{i,n} ,\quad  V_{t}^{i,n}:=\int_{0}^{t\land\tau^i}\gamma(s,C_{s}^{n})\di \ell_{s}^{i,n}\quad\text{for}\quad i=1,\ldots,n,
\]
we therefore have
\begin{equation}
    \E[M_{t}^{n}\mid\mathcal{\bar{F}}_{s}^{n}]=\E\Bigl[\frac{1}{n}\sum_{i=1}^{n}\E[\mathbbm{1}_{t\geq\tau^i}-V_{t}^{i,n}\mid\mathcal{F}_{s}^{i,n}]\mid\bar{\mathcal{F}}_{s}^{n}\Bigr]\label{eq:I_martin}
\end{equation}
for all times $s,t\geq0$. Now fix an arbitrary pair of times $s<t$. By writing
\begin{align*}
    \E[\mathbbm{1}_{t\geq\tau^i}-\mathbbm{1}_{s\geq\tau^i}\mid\mathcal{F}_{s}^{i,n}]
    & =\Prob(s<\tau^i\leq t\mid\mathcal{F}_{s}^{i,n})=\E[\mathbbm{1}_{s<\tau^i}\mathbbm{1}_{t\geq\tau^i}\mid\mathcal{F}_{s}^{i,n}],
\end{align*}
we see that Lemma \ref{lem:compensator1} gives
\begin{align*}
    \E[\mathbbm{1}_{t\geq\tau^i}-\mathbbm{1}_{s\geq\tau^i}\mid\mathcal{F}_{s}^{i,n}]
    & =\mathbbm{1}_{s<\tau^i}e^{\int_{0}^{s}\gamma(r,\hat{C}_{r}^{n,(-i)})\di \hat{\ell}_{r}^{i,n}}\Prob(s<\tau^i,\;t\geq\tau^i\mid\hat{\mathcal{F}}_{s}^{i,n})\\
    & =\mathbbm{1}_{s<\tau^i}e^{\int_{0}^{s}\gamma(r,\hat{C}_{r}^{n,(-i)})\di \hat{\ell}_{r}^{i,n}}\bigl(\Prob(t\geq\tau^i\mid\hat{\mathcal{F}}_{s}^{i,n})-\Prob(s\geq\tau^i\mid\hat{\mathcal{F}}_{s}^{i,n})\bigr)
\end{align*}
From here, we use the conditional law of $\tau^i$ from (\ref{eq:cond_exp_dist}) to write
\begin{align*}
    \Prob(t\geq\tau^i\mid\hat{\mathcal{F}}_{s}^{i,n})
    & =\E\bigl[\Prob(t\geq\tau^i\mid\hat{\mathcal{F}}_{t}^{i,n})\mid\hat{\mathcal{F}}_{s}^{i,n}\bigr]=1-\E[e^{-\int_{0}^{t}\gamma(r,\hat{C}_{r}^{n,(-i)})\di \hat{\ell}_{r}^{i,n}}\mid\hat{\mathcal{F}}_{s}^{i,n}],
\end{align*}
and so we arrive at
\begin{align}
    \E[\mathbbm{1}_{t\geq\tau^i}-\mathbbm{1}_{s\geq\tau^i}\mid\mathcal{F}_{s}^{i,n}]
    & =\mathbbm{1}_{s<\tau^i}e^{U^{i,n}_{0,s}}  \Bigl( e^{-U^{i,n}_{0,s}} - \E[e^{-U^{i,n}_{0,t}}  \mid\hat{\mathcal{F}}_{s}^{i,n}] \Bigr)\nonumber \\
    & =\mathbbm{1}_{s<\tau^i}\bigl(1-\E[e^{-U^{i,n}_t}\mid\hat{\mathcal{F}}_{s}^{i,n}]\bigr),\label{eq:indicator_marting}
\end{align}
where we have defined
\begin{equation*}
    U^{i,n}_{q,r} := \int_{q}^{r}\gamma(\theta,\hat{C}_{\theta}^{n,(-i)})\di \hat{\ell}_{\theta}^{i,n}, \quad r \ge q,\quad \text{and}\quad  U^{i,n}_r :=  U^{i,n}_{s,r}, \quad r \ge s.
\end{equation*}
From the construction of the particle system in Section \ref{sec:existence_sys}, we can observe that $\hat{C}_{\theta}^{n,(-i)}=C_{\theta}^{n}$ and $\hat{\ell}_{\theta}^{i,n}=\ell_{\theta}^{i,n}$ pathwise on the event $\{\theta<\tau^i\}$. Thus, we can write
\begin{align*}
    V_{t}^{i,n}-V_{s}^{i,n}
    &= U^{i,n}_{0, t\land\tau^i} - U^{i,n}_{0, s\land\tau^i} = \mathbbm{1}_{s<\tau^i} U^{i,n}_{ \tau^i\land t}.
\end{align*}
Applying Lemma \ref{lem:compensator2} to the continuous $\hat{\mathcal{F}}^{i,n}$-adapted process $Y=U^{i,n}$, we get
\begin{equation}\label{eq:V_U}
    \E\left[ V_{t}^{i,n}-V_{s}^{i,n} \mid \mathcal{F}_{s}^{i,n} \right] 
    = \mathbbm{1}_{s<\tau^i}\E\left[\int_{s}^{t} U^{i,n}_{r} e^{-U^{i,n}_r} \di U^{i,n}_r +  U^{i,n}_{t}\,e^{-U^{i,n}_t} \mid\hat{\mathcal{F}}_{s}^{i,n}\right].
\end{equation}
Since $U^{i,n}$ is continuous non-decreasing on $[s,t]$ with $U^{i,n}_s=0$, we see that
\[
    \int_s^t U^{i,n}_r\,e^{-U^{i,n}_r}\,\di U^{i,n}_r = \int_0^{U^{i,n}_t} u\,e^{-u}\,\di u = 1 - e^{-U^{i,n}_t} - U^{i,n}_t\,e^{-U^{i,n}_t}.
\]
so it follows from \eqref{eq:indicator_marting} and \eqref{eq:V_U} that
\[
    \E[\mathbbm{1}_{t\geq\tau^i}-V_{t}^{i,n}\mid\mathcal{F}_{s}^{i,n}]=\mathbbm{1}_{s\geq\tau^i}-V_{s}^{i,n}.
\]
Since $i$ was arbitrary, we can plug this back into the tower law computation \eqref{eq:I_martin} to conclude that
\[
    \E[M_{t}^{n}\mid\mathcal{\bar{F}}_{s}^{n}]=\E[M_{s}^{n}\mid\bar{\mathcal{F}}_{s}^{n}]=M_{s}^{n},
\]
using also that $M_{s}^{n}$ is $\bar{\mathcal{F}}_{s}^{n}$-measurable. As the times $s<t$ were arbitrary, this shows that $(M_{t}^{n})_{t\geq0}$ is a martingale with respect to the filtration $(\mathcal{\bar{F}}_{t}^{n})_{t\geq0}$.
\end{proof}

\subsection{Limiting behaviour as the population size tends to infinity}\label{subsect:proof_Barnes_thm}

It remains to confirm the asymptotic part of Theorem~\ref{Thm:convergence_Barnes}. To this end, we exploit the martingale property established above along with the fact that, with probability 1, no two infections can occur at the same time by Proposition \ref{prop:distinct_infections}.

\begin{proof}[Proof of Theorem \ref{Thm:convergence_Barnes}]
By Proposition \ref{prop:Infected_proportion_martingale}, the difference $I^n_t-V^n_t$ is a martingale in the filtration $(\bar{\mathcal{F}}_{t}^{n})_{t\geq0}$ from (\ref{eq:Filtration_F_bar}). Since the local times are continuous and of finite variation, we have that the quadratic variation is
\[
    [I^n-V^n]_t= \sum_{0<s \leq t} (\Delta I_s^n)^2 = \frac{1}{n^2}\sum_{i,j=1}^n \mathbbm{1}_{\{\tau^i=\tau^j\leq t\}}
\]
Applying the Burkholder--Davis--Gundy inequality, it therefore follows that, for all $t\geq0$,
\[
    \E \bigl[  \sup_{s\leq t}|I^n_s - V^n_s|^2 \bigr] \leq \frac{4}{n^2} \sum_{i,j=1}^n  \Prob (\tau^i=\tau^j \leq t ),\quad \text{for all} \quad n\geq 1.
\]
In view of Proposition \ref{prop:distinct_infections}, the off-diagonal terms in the sum are all zero, so we get
\[
    \E \bigl[  \sup_{s\leq t}|I^n_s - V^n_s|^2 \bigr] \leq \frac{C}{n},\quad \text{for all}\quad n\geq 1.
\]
Finally, Markov's inequality gives that there is then also convergence to zero uniformly on compacts in probability. This completes the proof.
\end{proof}

\begin{appendices}

\section{Recursive construction of the particle system}\label{appendix:particle_system}

This appendix provides the step-by-step construction of the particle system $\mathbf{X}^n$ satisfying Theorem~\ref{thm:existence} and the auxiliary particle systems $\mathbf{\hat{X}}^{n, (-i)}$ for $i = 1, \dots, n$ introduced in Section \ref{sec:existence_sys}. We first define a sequence of intermediate systems, which are then concatenated to form the global trajectories. The intermediate systems defined in Appendix~\ref{app:intermediate_systems} are utilized to construct both the true system $\mathbf{X}^n$ and the globally reflected system $\mathbf{\hat{X}}^n$, while those in Appendix~\ref{app:dismissed_infection} build the auxiliary systems $\mathbf{\hat{X}}^{n, (-i)}$. Finally, Appendix~\ref{app:concatenation_proofs} implements the precise concatenation of these piecewise components and confirms that this produces the correct local times.

\subsection{Intermediate systems}\label{app:intermediate_systems}
We construct the intermediate systems iteratively.
Let $\varsigma^{(k)}$ denote the $k^{\text{th}}$ infection time in the system $\mathbf{X}^n$. Setting $\varsigma^{(0)} := 0$, we partition the time horizon $[0,\infty)$ into a sequence of $n+1$  random intervals $[0, \varsigma^{(1)}), [\varsigma^{(1)}, \varsigma^{(2)}), \dots, [\varsigma^{(n)}, \infty)$.
For each of these $n+1$ time-intervals we define two intermediate systems, $\mathbf{X}^{n,(k)}$ and $\mathbf{\hat{X}}^{n,(k)}$ (where the index $k$ represents the $k^{th}$ time-interval, and also the $k^{th}$ step of the recursive construction). In the systems $\mathbf{X}^{n,(k)}$, infected particles are moved to the cemetery state $\dagger$, while in the systems $\mathbf{\hat{X}}^{n,(k)}$, particles, after infection, continue with fully reflected dynamics indefinitely. Both these intermediate systems are necessary to construct the global system $\mathbf{X}^n$.

\vspace{0.5cm}

\noindent\textbf{Step 1.} Define the system $\mathbf{\hat{X}}^{n, (1)} = (\hat{X}^{1,n, (1)}, \dots, \hat{X}^{n,n,(1)}) \in [A^{1,n,(1)}, \infty) \times \dots \times [A^{n,n,(1)}, \infty)$ as follows:
\begin{equation*}
    \left\{ \begin{array}{@{}l@{}l}
    \di \hat{X}_t^{i,n,(1)} = b^{(1)}(t, \hat{X}_t^{i,n,(1)}) \dt + \sigma^{(1)} (t, \hat{X}_t^{i,n,(1)}) \di B^{i,(1)}_t + \tfrac{1}{2}\di \ell_t^{A^{i,n,(1)}}(\hat{X}^{i,n,(1)}), & \quad t \ge 0, \\ \vspace{2pt}
    A_t^{i,n, (1)} = a_0,  & \quad t \ge 0, \\ \vspace{2pt}
    \hat{X}_0^{i,n,(1)} = X^i_0, &
    \end{array} \right.
\end{equation*}
for $i=1,\ldots,n$, where $X^{i}_0 >a_0$ and $a_0 \in \R_{\ge 0}$ are the given initial conditions; $b^{(1)}(t, x) := b(t, x)$ and $\sigma^{(1)} (t, x) := \sigma(t,x)$; and $B_t^{i,(1)} = B_t^i$. By Assumptions \ref{assump:coefficient_assumptions}--\ref{assump:inputs}, it follows from \cite[Thm.~1.2.1]{Pilipenko_book} that we have a unique strong solution $\mathbf{\hat{X}}^{n, (1)}$, noting that the result readily generalises to allow for the (suppressed) dependence on $X_0^i$ in $b^{(1)}$ and $\sigma^{(1)}$. Set $\mathbf{A}^{n, (1)} = (A^{1,n,(1)}, \dots, A^{n,n,(1)})$ and $\mathbf{B}^{(1)} = (B^{1,(1)}, \dots, B^{n,(1)} )$.

For all $i \in \{1, \dots, n\}$, we introduce elastic absorption at the boundary $A^{i,n,(1)}_t$ at rate $\gamma^{i,(1)}(t) : = \gamma(t,0)$. Let $\{\chi^{i,(1)}\}^{n}_{i=1}$ be a family of i.i.d.~exponential random variables, independent of all system components. Define the \textit{potential infection times}
\begin{equation*}
    \tilde{\varsigma}^{\,i,(1)} : = \inf\left\{ t\ge 0 \, : \, \int_0^t \gamma^{i,(1)}(s) \di \ell^{a_0}_s(\hat X^{i,n,(1)}) \ge \chi^{i,(1)} \right\},
\end{equation*}
so that the actual \textit{time of the first infection} is $\tilde{\varsigma}^{(1)} : = \min \{ \tilde\varsigma^{\, i,(1)}: i = 1, \dots, n \}$. Let $j^{(1)} \in \{1, \dots, n\}$ denote the random index of the particle that achieves this minimum if it is finite. Lemma \ref{lem:distinct_infections} below confirms that this index is uniquely defined with probability 1.

The intermediate system $\mathbf{X}^{n, (1)} = (X^{1,n, (1)}, \dots, X^{n,n, (1)}) \in [\mathbf{A}^{n, (1)}, \boldsymbol{\infty}) \cup \{\boldsymbol{\dagger}\}$ is then defined by setting the infected particle to the cemetery state. More precisely,
\begin{equation}\label{eq:sys_X_1}
    \left\{ \begin{array}{@{}l@{}l}
    X^{i,n,(1)}_t = \hat{X}^{i,n,(1)}_t, & \quad t \ge 0, \, i= 1,\dots, n, \, i \neq j^{(1)}, \\ \vspace{2pt}
    X^{j^{(1)},n,(1)}_t =\hat{X}^{j^{(1)},n,(1)}_t, & \quad t \in [0, \tilde{\varsigma}^{(1)}), \\ \vspace{2pt}
    X^{j^{(1)},n,(1)}_t = \dagger, & \quad t \ge \tilde{\varsigma}^{(1)}.
    \end{array} \right.
\end{equation}
By construction, the trajectories of \eqref{eq:sys_X_1} exactly coincide with the desired global system \eqref{eq:sys_X} on the interval $[0, \tilde{\varsigma}^{(1)})$. At the moment of this first infection, the system state must update, prompting the second step of the construction. To formalize this transition, we set $\varsigma^{(1)} := \tilde{\varsigma}^{(1)}$ (noting that $\varsigma^{(1)} = \tau^{j^{(1)}}$) and initialize the intermediate contagiousness index as $\hat{C}_t^{i,n,(1)} = 0$. 

\vspace{0.5cm}

\noindent\textbf{Step $\boldsymbol{k}$ ($\boldsymbol{k=2, \dots, n}$).} 
Let $J^{(k-1)} = \left\{ j^{(m)} \, : \, m =  1, \dots, k-1  \right\}$ denote the random set of indices for the particles that have been infected up to step $k-1$, with infection times $\tilde{\varsigma}^{(m)}$ within each step and cumulative infection time $\varsigma^{(k-1)}  := \varsigma^{(k-2)} + \tilde \varsigma^{(k-1)}$ defined recursively through the following construction. Consider the globally reflected intermediate system $\mathbf{\hat{X}}^{n,(k)} \in [\mathbf{A}^{n, (k)}, \boldsymbol{\infty})$ with dynamics:
\begin{equation}\label{eq:sys_X_hat_k}
    \left\{ \begin{array}{@{}l@{}l}
    \di \hat{X}_t^{i,n,(k)} = b^{(k)}(t, \hat{X}_t^{i,n,(k)}) \dt + \sigma^{(k)} (t, \hat{X}_t^{i,n,(k)}) \di B^{i,(k)}_t + \tfrac{1}{2}\di \ell_t^{A^{i,n,(k)}}(\hat{X}^{i,n,(k)}), & \quad  \forall i,  \\[4pt]
    \displaystyle A_t^{i,n, (k)} = A_{\tilde{\varsigma}^{(k-1)}}^{i,n,(k-1)}
    + \frac{\alpha}{n} \sum_{m=1}^{k-2} \int_{\varsigma^{(m)}}^{t+\varsigma^{(m)}} \hspace{-15pt} \varrho(t + \varsigma^{(k-1)}-s) \ds
    + \frac{\alpha}{n} \int_{0}^{t} \hspace{-5 pt} \varrho(t-s) \ds,  & \quad  i \notin J^{(k-1)}, \\[3pt]
    \displaystyle A_t^{i,n, (k)} = A_{\tilde{\varsigma}^{(k-1)}}^{i,n,(k-1)}
    + \frac{\alpha}{n} \sum_{m=1, \, j^{(m)} \neq i}^{k-2} \int_{\varsigma^{(m)}}^{t+\varsigma^{(m)}} \hspace{-15pt} \varrho(t + \varsigma^{(k-1)}-s) \ds
    + \frac{\alpha}{n} \int_{0}^{t} \hspace{-5 pt} \varrho(t-s) \ds,  & \quad i \in J^{(k-2)}, \\[3pt]
    \displaystyle A_t^{i,n, (k)} = A_{\tilde{\varsigma}^{(k-1)}}^{i,n,(k-1)}
    + \frac{\alpha}{n} \sum_{m=1}^{k-2} \int_{\varsigma^{(m)}}^{t+\varsigma^{(m)}} \hspace{-15pt} \varrho(t + \varsigma^{(k-1)}-s) \ds,  & \quad i=j^{(k-1)}, \\[2pt]
    \hat{X}_0^{i,n,(k)} = \hat{X}^{i,n,(k-1)}_{\tilde{\varsigma}^{(k-1)}}, & \quad \forall i,
    \end{array} \right.
\end{equation}
where the coefficients and the Brownian motions are given by $b^{(k)}(t, x) := b(t+\varsigma^{(k-1)}, x)$, $\sigma^{(k)} (t, x) := \sigma(t+\varsigma^{(k-1)}, x)$, and $B^{i,(k)}_t := B^i_{t + \varsigma^{(k-1)}} - B^i_{\varsigma^{(k-1)}}$.  Crucially, the way $\tilde{\varsigma}^{(k-1)}$ and $\varsigma^{(k-1)}$ were generated in the previous step (as per the above for $k=2$ and the constructions outlined in the next two paragraphs for $k\geq 3$) ensures that $\varsigma^{(k-1)}$ is a stopping time for $\mathcal{F}^n_t$ and that the boundary $A^{i,n,(k)}$ and the starting point $\hat{X}^{i,n,(k-1)}_{\tilde{\varsigma}^{(k-1)}}$ are measurable functions of the inputs in the previous steps. Shifting to the frame of the moving boundary as in \eqref{eq:A_is_AC}--\eqref{eq:shift_halfline}, our assumptions thus enable us to apply \cite[Thm.~1.2.1]{Pilipenko_book} on the positive half-line, noting that the result generalises to allow for the randomness in our coefficients and $A^{i,n,(k)}$. This gives us a unique strong solution $\mathbf{\hat{X}}^{n,(k)}$.

To evaluate the infection rate, we define the intermediate contagiousness index $C_t^{i,n,(k)}$, accounting for the time-shifted boundaries across previous steps:
\begin{equation}\label{eq:contagious_prop_k}
    C_t^{i,n,(k)} := \frac{1}{\alpha} \Big( A_t^{i,n,(k)} 
    - A_{t - \bar{d}}^{i,n,(k)} \mathbbm{1}_{\{\bar{d}\, \le \, t\}}
    - \sum_{m = -1}^{k-2} A_{t + \varsigma^{(k-1)}- \varsigma^{(m)} - \bar{d}}^{i,n,(m+1)} \mathbbm{1}_{ \{  \varsigma^{(m)} \,\le \, t + \varsigma^{(k-1)} - \bar{d}\, < \, \varsigma^{(m+1)} \}} \Big),
\end{equation}
where we recall that $\bar{d}$ is the duration of the infection, and we have introduced the convention $\varsigma^{(-1)} := - \infty$ and $A^{i,n,(0)} \equiv a_0$. The intermediate infection rate is then given by $\gamma^{i,(k)}(t) := \gamma(t + \varsigma^{(k-1)}, C_t^{i,n,(k)})$.

Let $\{\chi^{j,(k)}\}^{n}_{j=1}$ be the family of i.i.d.~exponential random variables from Assumption \ref{assump:inputs}, independent of all variables generated up to step $k-1$, and independent of the system dynamics at step $k$. For each $i \notin J^{(k-1)}$, define the \textit{potential next infection times}:
\begin{equation}\label{eq:potential_infection_times_k_step}
    \tilde{\varsigma}^{\, i,(k)} : = \inf\left\{ t\ge 0 \, : \, \int_0^t \gamma^{i,(k)}(s) \di \ell^{A^{i,n, (k)}}_s(\hat X^{i,n,(k)}) \ge \chi^{i,(k)} \right\}.
\end{equation}
We define the \textit{time of the $k^{\text{th}}$ infection} as $\tilde \varsigma^{(k)} := \min \left\{ \tilde{\varsigma}^{\, i,(k)}: i \notin J^{(k-1)} \right\}$, and let $j^{(k)} \notin J^{(k-1)}$ denote the random index of the particle that realizes this minimum if it is finite. Similarly to step 1, the index is uniquely defined with probability 1 by Lemma \ref{lem:distinct_infections}.

Finally, recalling that particles indexed in $J^{(k-1)}$ are already infected, the true intermediate system $\mathbf{X}^{n, (k)} \in [\mathbf{A}^{n, (k)}, \boldsymbol{\infty}) \cup \{\boldsymbol{\dagger}\}$ is given by:
\begin{equation}\label{eq:sys_X_k}
    \left\{ \begin{array}{@{}l@{}l}
    X^{i,n,(k)}_t = \hat{X}^{i,n,(k)}_t, & \quad t \ge 0, \, i \notin J^{(k-1)}, \, i \neq j^{(k)}, \\ \vspace{2pt}
    X^{i,n,(k)}_t = \dagger, & \quad t \ge 0, \, i \in J^{(k-1)}, \\ \vspace{2pt}
    X^{j^{(k)},n,(k)}_t =\hat{X}^{j^{(k)},n,(k)}_t, & \quad t \in [0, \tilde{\varsigma}^{(k)}), \\ \vspace{2pt}
    X^{j^{(k)},n,(k)}_t = \dagger, & \quad t \ge \tilde{\varsigma}^{(k)}.
\end{array} \right.
\end{equation}

\vspace{0.05cm}

\noindent\textbf{Step $\boldsymbol{n+1}$.} After the $n^{\text{th}}$ infection, all particles are infected. For the true system, we simply set $\mathbf{X}^{n, (n+1)}_t := \boldsymbol{\dagger}$ for $t \ge \varsigma^{(n)}$. For the reflected system, the globally reflected trajectories $\mathbf{\hat{X}}^{n,(n+1)} \in [\mathbf{A}^{n, (n+1)}, \boldsymbol{\infty})$ continue to evolve according to the dynamics in \eqref{eq:sys_X_hat_k} (with coefficients and boundaries shifted by $\varsigma^{(n)}$ analogously to previous steps), but without any further infection stopping times.

\begin{rmk}
    It is clear from our construction that the dynamics of particle $i$ in the systems $\mathbf{X}^{n, (k)}$ and $\mathbf{\hat{X}}^{n,(k)}$ are equivalent up until its infection time $\tau^i$. After infection, particle $i$ is moved to the `infected state' $\dagger$ in the systems $\mathbf{X}^{n, (k)}$, while it goes back to its reflected dynamics in $\mathbf{\hat{X}}^{n,(k)}$. Then, the $\mathbf{\hat{X}}^{n,(k)}$ are particle systems with global-in-time diffusive dynamics, where the interaction between the particles is still accounted for in the movement of the boundary.
\end{rmk}

\begin{lemma}[Unique infected particle at each step]\label{lem:distinct_infections}
At every step $k$ in the above construction, for any $i\neq j$, we have
\begin{equation}\label{eq:no_tie_step_k}
    \Prob\bigl(\tilde\varsigma^{i,(k)} = \tilde\varsigma^{j,(k)} = \tilde\varsigma^{(k)} < \infty,\; i,j \notin J^{(k-1)}\bigr) = 0.
\end{equation}
In particular, the random indices $j^{(k)}$, $k=1,\ldots,n$, are uniquely determined with probability 1.
\end{lemma}
\begin{proof}
For each step $k\in\{1,\ldots,n\}$ and particle $l\in\{1,\ldots,n\}$, we write 
\[
    U^{l,(k)}_s :=\displaystyle \int_0^s \gamma^{l,(k)}(r) \di \ell^{A^{l,n,(k)}}_r(\hat X^{l,n,(k)}),\quad l \notin J^{(k-1)},
\]
and set $U^{l,(k)}_s:=0$ for $l \in J^{(k-1)}$. Set also $\tilde\varsigma^{l,(k)}:=\infty$ for $l\in J^{(k-1)}$. With these conventions, \eqref{eq:potential_infection_times_k_step} reads as $\tilde\varsigma^{l,(k)}=\inf\{s\geq0: U^{l,(k)}_s\geq\chi^{l,(k)}\}$, where $U^{l,(k)}$ is continuous and non-decreasing for every $l$ and $k$. By the continuity and monotonicity of $U^{i,(k)}$, on $\{\tilde\varsigma^{i,(k)}<\infty\}$ we have $U^{i,(k)}_{\tilde\varsigma^{i,(k)}}=\chi^{i,(k)}$. Since $\tilde\varsigma^{j,(k)}=\tilde\varsigma^{i,(k)}$ on the tie event in \eqref{eq:no_tie_step_k}, this gives
\begin{equation}\label{eq:distinct_times_inclusion}
    \bigl\{\tilde\varsigma^{i,(k)}=\tilde\varsigma^{j,(k)}=\tilde\varsigma^{(k)}<\infty,\; i,j\notin J^{(k-1)}\bigr\} \subseteq \bigl\{U^{i,(k)}_{\tilde\varsigma^{j,(k)}}=\chi^{i,(k)}\bigr\}.
\end{equation}
Defining
\[
    \mathcal{D}^{(k-1)} := \sigma\bigl(X_0^l,B^l_s,\chi^{l,(m)} : s\geq 0, \, l\in\{1,\ldots,n\},\, m\leq k-1\bigr),
\]
we see that the step-$k$ trajectories $\hat X^{l,n,(k)}$, the boundaries $A^{l,n,(k)}$, and the rate coefficients $\gamma^{l,(k)}$, as well as the random set $J^{(k-1)}$ are $ \mathcal{D}^{(k-1)}$-measurable.  Consequently, each $U^{l,(k)}$ is $\mathcal{D}^{(k-1)}$-measurable, and the event $\{i,j\notin J^{(k-1)}\}$ is in $\mathcal{D}^{(k-1)}$. With $U^{i,(k)}_\infty:=\sup_{s\geq0} U^{i,(k)}_s$, $\tilde\varsigma^{j,(k)}$ and $U^{i,(k)}_{\tilde\varsigma^{j,(k)}}$ are therefore $\mathcal{D}^{(k-1)}\vee\sigma(\chi^{j,(k)})$-measurable, so we conclude that
\begin{equation*}
    \Prob\bigl(U^{i,(k)}_{\tilde\varsigma^{j,(k)}} = \chi^{i,(k)}\bigr) = 0, 
\end{equation*}
since, by construction, $\chi^{i,(k)}$ is an Exponential random variable independent of $\mathcal{D}^{(k-1)}\vee\sigma(\chi^{j,(k)})$. In view of \eqref{eq:distinct_times_inclusion}, this completes the proofs.
\end{proof}

\subsection{Dismissing infection of a tagged particle}\label{app:dismissed_infection}

The intermediate systems needed for the construction of the auxiliary systems $\mathbf{\hat{X}}^{n, (-i)}$ for each $i \in \{1, \dots, n\}$ are also defined recursively, as in the previous subsection. System $\mathbf{\hat{X}}^{n, (-i)}$ and the true system $\mathbf{X}^n$ are pathwise identical up to the infection time of particle $i$ in $\mathbf{X}^n$, meaning their intermediate constructions coincide up until that point. However, because particle $i$ is immune in $\mathbf{\hat{X}}^{n, (-i)}$, the construction must diverge as soon as the infection of particle $i$ happens in $\mathbf{X}^n$. As before, each step of the construction of $\mathbf{\hat{X}}^{n, (-i)}$ ends when a new infection happens: since $i$ is immune, we need only account for $n-1$ infections, so we have a total of $n$ steps, instead of $n+1$. We denote by $\mathbf{\hat{X}}^{n, (-i),(k)}$ the intermediate systems for each step of this construction.

Fix $i \in \{1, \dots, n\}$. First, we define a sequence of times representing when particles \textit{other than $i$} are infected in the \textit{true} intermediate systems $\mathbf{X}^{n, (k)}$. If particle $i$ is the $m^{\text{th}}$ particle to get infected in the true system (i.e., $i = j^{(m)}$), we define
\begin{equation}\label{eq:xi_i_true}
    \xi^{i}_k := \varsigma^{(k)} \quad \mathrm{for} \: k=1,\ldots,m-1, \qquad \mathrm{and} \qquad  \xi^{i}_k := \varsigma^{(k+1)} \quad \mathrm{for} \: k=m,\ldots,n-1,
\end{equation}
where the $\varsigma^{(k)}$ are defined as in Appendix~\ref{app:intermediate_systems}. The $\xi^i_k$ are used in the definition of the system specified in Proposition~\ref{prop:evol_x_hat}.

We now move to the construction of the intermediate systems $\mathbf{\hat{X}}^{n, (-i),(k)} \in [ \mathbf{\hat{A}}^{n, (-i), (k)}, \boldsymbol{\infty}) \cup \{\boldsymbol{\dagger}\}$. For $k = 1, \ldots, n-1$,
set the \textit{potential infection times}
\[
    \tilde\varsigma^{j,(k),(-i)}
    \;:=\; \inf\bigl\{ t \ge 0 \,:\, \textstyle\int_0^t \gamma^{i,(k),(-i)}(s) \di\ell_s^{\hat A^{j,n,(-i),(k)}}(\hat X^{j,n,(-i),(k)}) \ge \chi^{j,(k)}\bigr\}, \quad j \ne i,
\]
and let the \textit{time of the $k^{th}$ infection} be
\begin{equation}\label{eq:infection_times_k_step_immune}
    \tilde\xi^{(-i),(k)} \;:=\; \min\bigl\{\tilde\varsigma^{j,(k),(-i)} \,:\, j \in J^{(-i),(k-1)}\bigr\},
\end{equation}
where $J^{(-i),(k-1)} \subseteq \{1, \ldots, n\} \setminus \{i\}$ is the set of surviving particles (diferent from $i$) entering step $k$. We initialize $J^{(-i),(0)} = \{1,\ldots,n\}\setminus\{i\}$, and at each step the minimiser $j^{(-i),(k)}$ is removed, noting that it is unique by Lemma \ref{lem:distinct_infections}. We let the \textit{cumulative infection times} be defined as $\xi^{(-i),(k)} = \xi^{(-i),(k-1)} + \tilde\xi^{(-i),(k)}$, with $\xi^{(-i),(0)} := 0 $.  For all $j \in \{1, \dots, n\}$, the boundary $\hat A^{j, n,(-i),(k)}$ and contagiousness index $\hat C^{j, n,(-i),(k)}$ follow the same recursive form as in \eqref{eq:sys_X_hat_k} and \eqref{eq:contagious_prop_k}, with the random increments $\tilde\xi^{(-i),(\cdot)}$ and cumulative infection times $\xi^{(-i),(\cdot)}$ in place of $\tilde\varsigma^{(\cdot)}$ and $\varsigma^{(\cdot)}$. With these changes, and applying all other required notational replacements wherever necessary, the construction of $\mathbf{\hat{X}}^{n, (-i),(k)}$ now proceeds as that of $\mathbf{X}^{n,(k)}$ in Appendix~\ref{app:intermediate_systems}, for $k = 1, \dots, n-1$. Finally, at step $n$ of the construction, we are left with particle $i$ reflecting off the boundary indefinitely, while all other $n-1$ particles have been removed to $\dagger$.

\begin{rmk}
We emphasize the distinction between ${\xi}^{(-i),(k)}$ and $\xi^i_k$: while $\xi^i_k$ from \eqref{eq:xi_i_true} merely acts as a shifted index that ignores the $m^{\text{th}}$ stopping time $\varsigma^{(m)}$, the times ${\xi}^{(-i),(k)}$ defined above for $k \ge m$ represent a genuine stochastic divergence in the pathwise evolution of the auxiliary system. We have $\xi^i_k = \xi^{(-i), (k)}$ for $k = 1, \dots, m-1$. Similarly, the intermediate systems $\mathbf{\hat{X}}^{n, (-i),(k)}$ and $\mathbf{X}^{n,(k)}$, and the moving boundaries $\mathbf{\hat{A}}^{n, (-i), (k)}$ and $\mathbf{A}^{n, (k)}$ are pathwise equivalent for $k=1, \dots, m-1$, but diverge for $k \ge m$, since particle $i$ has never contributed to the boundary advancement in this construction, which in turn impacts the particles' dynamics.
\end{rmk}

\subsection{Concatenation and proofs}\label{app:concatenation_proofs}

We now carry out the concatenation of the piecewise intermediate systems from Appendix~\ref{app:intermediate_systems} and Appendix~\ref{app:dismissed_infection} to construct the global trajectories. This yields the true system $\mathbf{X}^n$ satisfying Theorem~\ref{thm:existence} together with the corresponding globally reflected system $\mathbf{\hat{X}}^n$ and the tagged auxiliary systems $\mathbf{\hat{X}}^{n, (-i)}$ for each particle $i \in \{ 1, \dots, n\}$.

\begin{defn}[Piecewise construction]\label{defn:piecewise_process}
Let $\{ \mathfrak{P}^{(k)}_\cdot \}_{k=1}^{n+1} $ be one of the collections of processes $\{ \hat X^{j,n,(k)}_\cdot \}_k$, $\{ X^{j,n,(k)}_\cdot \}_k$, or $\{ A^{j,n,(k)}_\cdot \}_k  $ defined in Appendix~\ref{app:intermediate_systems}. For all $t \ge 0$, we construct their piecewise concatenation $\mathfrak{P}_t$ by
\begin{equation}
    \mathfrak{P}_t:= \mathfrak{P}^{(1)}_t \mathbbm{1}_{t \in [0, \varsigma^{(1)})}
    + \sum_{k=2}^{n} \mathfrak{P}^{(k)}_{t-\varsigma^{(k-1)}} \mathbbm{1}_{t \in [\varsigma^{(k-1)},  \varsigma^{(k)})}
    + \mathfrak{P}^{(n+1)}_{t-\varsigma^{(n)}} \mathbbm{1}_{t  \ge \varsigma^{(n)}}. \label{eq:general_P_defn_by_parts}
\end{equation}
Similarly, for any fixed $i \in \{1, \dots, n\}$, let $\{ \mathfrak{Y}^{(-i),(k)}_\cdot \}_{k=1}^{n} $ denote a collection of the processes $\{ \hat X^{j,n, (-i),(k)}_\cdot \}_k$ or $\{ \hat A^{j,n, (-i),(k)}_\cdot \}_k$ from Appendix~\ref{app:dismissed_infection}. Using the partition times $\xi^{(-i),(k)}$, we construct their global concatenation $\mathfrak{Y}^{(-i)}_t$ for all $t \ge 0$ by:
\begin{equation} \label{eq:general_P_defn_by_parts_minus_i}
    \mathfrak{Y}^{(-i)}_t := \mathfrak{Y}^{(-i),(1)}_t \mathbbm{1}_{t \in [0, \xi^{(-i),(1)})}
    + \sum_{k=2}^{n-1} \mathfrak{Y}^{(-i),(k)}_{t-\xi^{(-i),(k-1)}} \mathbbm{1}_{t \in [\xi^{(-i),(k-1)},  \xi^{(-i),(k)})}
    + \mathfrak{Y}^{(-i),(n)}_{t-\xi^{(-i),(n-1)}} \mathbbm{1}_{t  \ge \xi^{(-i),(n-1)}}.
\end{equation}
\end{defn}

The remainder of this section is dedicated to the proof of Proposition \ref{prop:evol_x_hat} along with some technical lemmas utilised here and in Section \ref{sec:existence_sys}.

\begin{proof}[Proof of Proposition~\ref{prop:evol_x_hat}]
Recall the definitions of $\hat{X}_t^{i,n,(k)}$ for $k = 1, \dots, n+1$ in Appendix~\ref{app:intermediate_systems}. According to Definition~\ref{defn:piecewise_process}, we rewrite their dynamics in integral form, so that for $t \ge 0$ we have
\begin{align}\label{eq:X_hat_first}
    \hat{X}_t^{i,n} 
    & = \left(  X_0^{i,n}
    + \int_0^{t} b^{(1)}(s, \hat{X}_s^{i,n,(1)}) \ds
    + \int_0^{t} \sigma^{(1)} (s, \hat{X}_s^{i,n,(1)}) \di B^{i,(1)}_s \right) \mathbbm{1}_{t \in [0, \varsigma^{(1)})} \nonumber \\
    &\quad + \sum_{k=2}^{n} \left(  \hat{X}_0^{i,n,(k)}
    + \int_{0}^{t-\varsigma^{(k-1)}} \hspace{-20pt} b^{(k)}(s, \hat{X}_s^{i,n,(k)}) \ds
    + \int_{0}^{t - \varsigma^{(k-1)}} \hspace{-20pt} \sigma^{(k)} (s, \hat{X}_s^{i,n,(k)}) \di B^{i,(k)}_s \right)
    \mathbbm{1}_{t \in [\varsigma^{(k-1)},  \varsigma^{(k)})} \nonumber \\
    &\quad + \left(  \hat{X}_0^{i,n,(n+1)}
    + \int_{0}^{t-\varsigma^{(n)}} \hspace{-20pt} b^{(n+1)}(s, \hat{X}_s^{i,n,(n+1)}) \ds
    + \int_{0}^{t - \varsigma^{(n)}} \hspace{-20pt} \sigma^{(n+1)} (s, \hat{X}_s^{i,n,(n+1)}) \di B^{i,(n+1)}_s \right) \mathbbm{1}_{t  \ge \varsigma^{(n)}}  \nonumber \\
    &\quad + \int_0^{t} \tfrac{1}{2}\di \ell_{s}^{A^{i,n,(1)}}(\hat{X}^{i,n,(1)}) \, \mathbbm{1}_{t \in [0,\varsigma^{(1)})}
    + \sum_{k=2}^{n} \int_0^{t - \varsigma^{(k-1)}} \hspace{-30pt} \tfrac{1}{2}\di \ell_{s}^{A^{i,n, (k)}}(\hat{X}^{i,n,(k)}) \, \mathbbm{1}_{t \in [\varsigma^{(k-1)}, \varsigma^{(k)})} \nonumber \\
    &\quad + \int_0^{t - \varsigma^{(n)}} \hspace{-20pt} \tfrac{1}{2}\di \ell_{s}^{A^{i,n, (n+1)}}(\hat{X}^{i,n,(n+1)}) \, \mathbbm{1}_{t  \ge \varsigma^{(n)}}.
\end{align}
From the definitions of $X_0^{i,n,(k)}, \tilde \varsigma^{(k)}$ and $\varsigma^{(k)}$ (recalling that $\varsigma^{(0)} =0$), we see that
\begin{align}\label{eq:X_hat_k_0_formula_1}
    \hat{X}_0^{i,n,(k)}
    &= \hat{X}_{\tilde{\varsigma}^{(k-1)}}^{i,n,(k-1)}  = \hat{X}_{0}^{i,n,(k-1)}  + \int_0^{\tilde{\varsigma}^{(k-1)}} \hspace{-20pt} \di \hat{X}_{s}^{i,n,(k-1)} 
    = X_{0}^{i,n}  +  \sum_{m = 1}^{k-1} \int_0^{\tilde{\varsigma}^{(m)}} \hspace{-20pt} \di \hat{X}_{s}^{i,n,(m)} \nonumber \\
    &= X_{0}^{i,n} + \sum_{m = 1}^{k-1} \left( \int_{\varsigma^{(m-1)}}^{\varsigma^{(m)}} \hspace{-10pt} b^{(m)}(s - \varsigma^{(m-1)}, \hat{X}_{s - \varsigma^{(m-1)}}^{i,n,(m)}) \ds
    \right. \nonumber \\
    &\quad + \left. \int_{\varsigma^{(m-1)}}^{\varsigma^{(m)}} \hspace{-10pt} \sigma^{(m)} (s - \varsigma^{(m-1)}, \hat{X}_{s - \varsigma^{(m-1)}}^{i,n,(m)}) \di B^{i,(m)}_{s - \varsigma^{(m-1)}} +\tfrac{1}{2} \ell_{\tilde \varsigma^{(m)}}^{A^{i,n,(m)}} (\hat{X}^{i,n,(m)}) \right),
\end{align}
where we have applied the change of variables $s \mapsto s + \varsigma^{(m-1)}$ to get the last equality. By construction (see \textit{Step $k$} in Appendix~\ref{app:intermediate_systems}), we have that $b^{(m)}(s-\varsigma^{(m-1)},Y_s) = b(s, Y_s)$ and $\sigma^{(m)}(s-\varsigma^{(m-1)},Y_s) = \sigma (s, Y_s)$. Moreover, the stochastic increments $B^{i,(m)}_{s_{j+1} - \varsigma^{(m-1)}} - B^{i,(m)}_{s_{j} - \varsigma^{(m-1)}}$ are equal to $B^i_{s_{j+1}} - B^i_{s_j}$ for any discretization $\{ s_j \}$ of the time-interval $(\varsigma^{(m-1)}, \varsigma^{(m)})$. Finally, noting that $\hat{X}^{i,n}_s \,  \mathbbm{1}_{s \in [\varsigma^{(m-1)}  , \varsigma^{(m)})} =  \hat{X}_{s - \varsigma^{(m-1)}}^{i,n,(m)}$ by \eqref{eq:general_P_defn_by_parts}, we can simplify \eqref{eq:X_hat_k_0_formula_1} as
\begin{align}\label{eq:X_hat_k_0_formula}
    \hat{X}_0^{i,n,(k)}
    &= X_{0}^{i,n} +
    \int_0^{\varsigma^{(k-1)}} \hspace{-10pt} b (s, \hat{X}_s^{i,n}) \ds
    + \int_0^{\varsigma^{(k-1)}} \hspace{-10pt} \sigma (s, \hat{X}_s^{i,n}) \di B^i_s
    + \sum_{m = 1}^{k-1} \tfrac{1}{2}\ell_{\tilde \varsigma^{(m)}}^{A^{i,n,(m)}} (\hat{X}^{i,n,(m)}).
\end{align}	
By similar computations, and using \eqref{eq:X_hat_k_0_formula} into \eqref{eq:X_hat_first}, we finally have:
\begin{align*}
    \hat{X}_t^{i,n} 
    &= X_0^{i,n} + \int_0^{t} b(s, \hat{X}_s^{i,n}) \ds
    + \int_0^{t} \sigma(s, \hat{X}_s^{i,n}) \di B^{i}_s
    + \ell_{t}^{A^{i,n,(1)}}(\hat{X}^{i,n,(1)}) \, \mathbbm{1}_{t \in [0,\varsigma^{(1)})} \\
    &\quad + \frac{1}{2} \sum_{k=2}^{n} \left( \sum_{m = 1}^{k-1} \ell_{\tilde \varsigma^{(m)}}^{A^{i,n,(m)}} (\hat{X}^{i,n,(m)}) + \ell_{t - \varsigma^{(k-1)} }^{A^{i,n, (k)}}(\hat{X}^{i,n,(k)}) \right) \mathbbm{1}_{t \in [\varsigma^{(k-1)}, \varsigma^{(k)})} \\
    &\quad + \frac{1}{2}\left( \sum_{m = 1}^{n} \ell_{\tilde \varsigma^{(m)}}^{A^{i,n,(m)}} (\hat{X}^{i,n,(m)}) + \ell_{t - \varsigma^{(n)}}^{A^{i,n, (n+1)}}(\hat{X}^{i,n,(n+1)}) \right) \mathbbm{1}_{t  \ge \varsigma^{(n)}}.
\end{align*}
Lemma~\ref{lemma:local_time} and Corollary~\ref{cor:local_time} below deal with the local time terms, so that the above further simplifies to
\begin{equation*}
    \hat{X}_t^{i,n} 
    = X_0^{i,n} + \int_0^{t} b(s, \hat{X}_s^{i,n}) \ds
    + \int_0^{t} \sigma(s, \hat{X}_s^{i,n}) \di B^{i}_s
    + \tfrac{1}{2}\ell_{t}^{A^{i,n}}(\hat{X}^{i,n}).
\end{equation*}

We now move on to showing that the evolution equation of the process $A_t^{i,n}$ constructed according to \eqref{eq:general_P_defn_by_parts} satisfies the required equation in \eqref{eq:sys_X_hat}. Recalling the notation used in Appendix \ref{app:intermediate_systems}, we have that $j^{(k)}$ denotes the index of the particle infected at the $k^{th}$ step of the construction. Since by the end of our construction all $n$ particles have been infected, we can assume without loss of generality that $i = j^{(m)}$ for some $m \in \left\{ 1, \dots, n \right\}$. We compute $A^{i,n,(k)}_{t - \varsigma^{(k-1)}}$ for $t \in [\varsigma^{(k-1)},\varsigma^{(k)})$ distinguishing three cases: $k = 1, \dots, m$, $k = m+1$ and $k = m+2, \dots, n+1$.
\\
For $k = 1, \dots, m$ one can easily check by induction that
\begin{equation*}
    A_{t - \varsigma^{(k-1)}}^{i,n,(k)} = a_0 + \frac{\alpha}{n} \sum_{j = 1}^{k-1} \int_{\varsigma^{(j)}}^t \varrho(t-s) \ds, \quad \mathrm{for} \: t \in [\varsigma^{(k-1)},\varsigma^{(k)}),
\end{equation*}
using the change of variables $s \mapsto s+\varsigma^{(k-1)}$ and $s \mapsto s +t - \varsigma^{(k-1)}$ when appropriate.
\\
For the case $k=m+1$, we have that
\begin{align*}
    A_{t - \varsigma^{(m)}}^{i,n,(m+1)}
    &= A_{\tilde\varsigma^{(m)}}^{i,n,(m)}
    + \frac{\alpha}{n} \sum_{j = 1}^{m-1} \int_{\varsigma^{(j)}}^{t + \varsigma^{(j)} - \varsigma^{(m)}} \hspace{-35pt} \varrho(t-s) \ds \\
    &= a_0 + \frac{\alpha}{n} \sum_{j = 1}^{m-1} \left( \int_{\varsigma^{(j)}}^{\varsigma^{(m)}} \hspace{-15pt} \varrho(\varsigma^{(m)}-s) \ds + \int_{\varsigma^{(j)}}^{t + \varsigma^{(j)} - \varsigma^{(m)}} \hspace{-35pt} \varrho(t-s) \ds \right) \\
    &= a_0 + \frac{\alpha}{n} \sum_{j = 1}^{m-1} \int_{\varsigma^{(j)}}^t \varrho(t-s) \ds,
    \quad \mathrm{for} \: t \in [\varsigma^{(m)},\varsigma^{(m+1)}),
\end{align*}
where we have applied the change of variables $s \mapsto s+t-\varsigma^{(m)}$ to get the last equality. Finally, for $k = m+2, \dots, n$ again by induction we can show that
\begin{equation*}
    A_{t - \varsigma^{(k-1)}}^{i,n,(k)} = a_0 + \frac{\alpha}{n} \sum_{j = 1}^{m-1} \int_{\varsigma^{(j)}}^t \varrho(t-s) \ds + \frac{\alpha}{n} \sum_{j = m+1}^{k-1} \int_{\varsigma^{(j)}}^t \varrho(t-s) \ds, \quad \mathrm{for} \: t \in [\varsigma^{(k-1)},\varsigma^{(k)}).
\end{equation*}
Similarly for the final interval $t \ge \varsigma^{(n)}$, we have $A_{t - \varsigma^{(n)}}^{i,n,(n+1)} = a_0 + \frac{\alpha}{n} \sum_{j \neq m}^{n} \int_{\varsigma^{(j)}}^t \varrho(t-s) \ds$.

Recalling \eqref{eq:xi_i_true} for the definition of the random times $\xi^i_k$, we have that $\xi_k^i = \varsigma^{(k)}$ for $k = 1, \dots, m-1$ and $\xi_k^i = \varsigma^{(k+1)}$ for $k = m, \dots, n-1$. This sequence explicitly omits $\varsigma^{(m)}$. Therefore, for all $k = 1, \dots, n+1$, we can express the evaluated boundaries in a unified manner in terms of $\left\{\xi^i_k \right\}$ as
\begin{equation*}
    A_{t - \varsigma^{(k-1)}}^{i,n,(k)}
    = a_0 + \frac{\alpha}{n} \sum_{j = 1, j \neq m}^{n} \int_{0}^t \varrho(t-s) \mathbbm{1}_{s \ge \varsigma^{(j)}} \ds
    = a_0 + \frac{\alpha}{n} \sum_{j = 1}^{n-1} \int_{0}^t \varrho(t-s) \mathbbm{1}_{s \ge \xi^i_j} \ds, \quad \mathrm{for} \: t \in [\varsigma^{(k-1)},\varsigma^{(k)}).
\end{equation*}
Then by the piecewise concatenation \eqref{eq:general_P_defn_by_parts}, we have that, for all $t \ge 0$,
\begin{equation*}
    A^{i,n}_{t} = a_0 + \frac{\alpha}{n} \sum_{j = 1}^{n-1} \int_{0}^t \varrho(t-s) \mathbbm{1}_{s \ge \xi^i_j} \ds
    = a_0 + \alpha \int_{0}^t \varrho(t-s) I_s^{i,n} \ds, 
\end{equation*}
where we define the process $I_t^{i,n} = \frac{1}{n} \sum_{j=1}^{n-1} \mathbbm{1}_{[0,t]}(\xi^i_j)$. This matches the required dynamics in \eqref{eq:sys_X_hat}, which concludes the proof of Proposition~\ref{prop:evol_x_hat}.
\end{proof}

For the above proof, we relied on the following observations, which we now prove.

\begin{lemma}\label{lemma:local_time}
For $k=2, \dots, n+1$, and $t \in [\varsigma^{(k-1)}, \varsigma^{(k)})$,
\begin{equation*}
    \sum_{m = 1}^{k-1} \ell_{\tilde \varsigma^{(m)}}^{A^{i,n,(m)}} (\hat{X}^{i,n,(m)}) + \ell_{t - \varsigma^{(k-1)} }^{A^{i,n, (k)}}(\hat{X}^{i,n,(k)}) = \ell_{t}^{A^{i,n}}(\hat{X}^{i,n}), \quad \forall i = 1, \dots, n.
\end{equation*}
\end{lemma}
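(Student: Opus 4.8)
The plan is to show that the local time of the piecewise-defined process $\hat{X}^{i,n}$ along its piecewise-defined boundary $A^{i,n}$ decomposes additively over the random time intervals $[\varsigma^{(m-1)},\varsigma^{(m)})$, and that on each such interval the contribution equals the local time of the $m$-th intermediate process $\hat{X}^{i,n,(m)}$ along its own boundary $A^{i,n,(m)}$ over the corresponding shifted time window. First I would recall from Definition~\ref{defn:piecewise_process} that $\hat{X}^{i,n}_s = \hat{X}^{i,n,(m)}_{s-\varsigma^{(m-1)}}$ and $A^{i,n}_s = A^{i,n,(m)}_{s-\varsigma^{(m-1)}}$ for $s\in[\varsigma^{(m-1)},\varsigma^{(m)})$, together with the fact that $[\hat{X}^{i,n}]_s$ likewise restricts to $[\hat{X}^{i,n,(m)}]_{s-\varsigma^{(m-1)}}$ on that interval (since $\hat X^{i,n}$ is, on each piece, a shifted copy of a continuous semimartingale with the same driving Brownian motion by \eqref{eq:BM_k} and same $\sigma$ by \eqref{eq:sigma_k}). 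Then I would return to the definition \eqref{eq:local_time_defn} of $\ell^h(X)$ as an $\varepsilon\downarrow 0$ limit of $\frac{1}{\varepsilon}\int_0^t \mathbbm{1}_{[h(s),h(s)+\varepsilon)}(X_s)\di[X]_s$, split the integral $\int_0^t = \sum_{m=1}^{k-1}\int_{\varsigma^{(m-1)}}^{\varsigma^{(m)}} + \int_{\varsigma^{(k-1)}}^{t}$, apply the change of variables $s\mapsto s+\varsigma^{(m-1)}$ on each piece, and use the three restriction identities above to recognise each term inside the $\varepsilon$-limit as the approximating integral for $\ell^{A^{i,n,(m)}}_{\tilde\varsigma^{(m)}}(\hat X^{i,n,(m)})$ (or, for the last piece, $\ell^{A^{i,n,(k)}}_{t-\varsigma^{(k-1)}}(\hat X^{i,n,(k)})$), using $\tilde\varsigma^{(m)} = \varsigma^{(m)}-\varsigma^{(m-1)}$.

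The one genuine subtlety — and the step I expect to be the main obstacle — is interchanging the finite sum with the $\varepsilon\downarrow 0$ limit, i.e.\ passing from $\lim_{\varepsilon}\sum_m(\cdots) = \sum_m \lim_\varepsilon(\cdots)$. This is harmless because the sum is over the deterministic-in-number but random-in-value index range $m=1,\dots,k-1$ with $k\le n+1$ fixed, so it is a \emph{finite} sum $\omega$ by $\omega$; each summand converges almost surely by the standard existence of local time for the continuous semimartingale $\hat X^{i,n,(m)}$ along the continuous curve $A^{i,n,(m)}$ (which holds by the general theory once one notes $\hat X^{i,n,(m)}$ is a continuous semimartingale and $A^{i,n,(m)}$ is continuous, indeed of finite variation), and a finite sum of a.s.-convergent sequences converges a.s.\ to the sum of the limits. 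A secondary point to handle carefully is the behaviour at the junction points $\varsigma^{(m)}$: since the local time $t\mapsto \ell^h_t(X)$ is continuous, contributions from the single points $\{\varsigma^{(m)}\}$ are null, so whether the intervals are taken half-open or closed is immaterial; I would state this explicitly rather than belabour it.

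Finally, I would assemble: the decomposition gives $\ell^{A^{i,n}}_t(\hat X^{i,n}) = \sum_{m=1}^{k-1}\bigl(\ell^{A^{i,n,(m)}}_{\varsigma^{(m)}-\varsigma^{(m-1)}}(\hat X^{i,n,(m)}) - \ell^{A^{i,n,(m)}}_{0}(\hat X^{i,n,(m)})\bigr) + \ell^{A^{i,n,(k)}}_{t-\varsigma^{(k-1)}}(\hat X^{i,n,(k)})$, and since $\ell^h_0 \equiv 0$ for every curve $h$ and $\varsigma^{(m)}-\varsigma^{(m-1)} = \tilde\varsigma^{(m)}$, this is exactly the claimed identity $\sum_{m=1}^{k-1}\ell^{A^{i,n,(m)}}_{\tilde\varsigma^{(m)}}(\hat X^{i,n,(m)}) + \ell^{A^{i,n,(k)}}_{t-\varsigma^{(k-1)}}(\hat X^{i,n,(k)}) = \ell^{A^{i,n}}_t(\hat X^{i,n})$, valid for all $i=1,\dots,n$ and all $t\in[\varsigma^{(k-1)},\varsigma^{(k)})$, $k=2,\dots,n+1$. (The companion Corollary~\ref{cor:local_time} presumably records the telescoped form of this, i.e.\ that the running sum $\sum_{m=1}^{k-1}\ell^{A^{i,n,(m)}}_{\tilde\varsigma^{(m)}}(\hat X^{i,n,(m)})$ equals $\ell^{A^{i,n}}_{\varsigma^{(k-1)}}(\hat X^{i,n})$, which follows by taking $t\uparrow\varsigma^{(k-1)}$ and using continuity of the local time in $t$.)
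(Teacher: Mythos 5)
Your proposal follows essentially the same route as the paper's proof: restrict $\hat X^{i,n}$, $A^{i,n}$, and the quadratic variation to each random interval $[\varsigma^{(m-1)},\varsigma^{(m)})$ via Definition~\ref{defn:piecewise_process}, change variables $s\mapsto s+\varsigma^{(m-1)}$ in the occupation-measure approximation from \eqref{eq:local_time_defn}, and sum. You are somewhat more explicit than the paper about justifying the interchange of the finite sum with the $\varepsilon\downarrow 0$ limit and about the junction points $\{\varsigma^{(m)}\}$ contributing nothing, but these are the same argument.
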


\begin{corollary}\label{cor:local_time}
For $t < \varsigma^{(1)}$,
\begin{equation*}
    \ell_{t}^{A^{i,n,(1)}}(\hat{X}^{i,n,(1)}) = \ell_{t}^{A^{i,n}}(\hat{X}^{i,n}), \quad \forall i = 1, \dots n.
\end{equation*}
For $t \ge \varsigma^{(n)}$,
\begin{equation*}
    \sum_{m = 1}^{n} \ell_{\tilde \varsigma^{(m)}}^{A^{i,n,(m)}} (\hat{X}^{i,n,(m)}) + \ell_{t - \varsigma^{(n)}}^{A^{i,n, (n+1)}}(\hat{X}^{i,n,(n+1)})
    = \ell_{t}^{A^{i,n}}(\hat{X}^{i,n}), \quad \forall i = 1, \dots n.
\end{equation*}
\end{corollary}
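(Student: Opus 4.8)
The plan is to reduce both statements to the definition of the local time in \eqref{eq:local_time_defn}, exploiting that, for every process involved, the quadratic variation is absolutely continuous in time so that the defining occupation integral splits cleanly along the junction times. Fix $i$, fix $k\in\{2,\dots,n+1\}$, and fix $t\in[\varsigma^{(k-1)},\varsigma^{(k)})$ (with the convention that the last block is $[\varsigma^{(n)},\infty)$ when $k=n+1$). For small $\varepsilon>0$ I would start from the quantity
\[
\frac{1}{\varepsilon}\int_0^t \mathbbm{1}_{[A^{i,n}_s,\,A^{i,n}_s+\varepsilon)}(\hat X^{i,n}_s)\di[\hat X^{i,n}]_s,
\]
whose limit as $\varepsilon\downarrow0$ is $\ell_t^{A^{i,n}}(\hat X^{i,n})$. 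This is not circular: that $\hat X^{i,n}$ is a continuous semimartingale with $[\hat X^{i,n}]_s=\int_0^s\sigma(r,\hat X^{i,n}_r)^2\di r$ is read off directly from the integral representation of $\hat X^{i,n}$ derived \emph{without} appeal to the present lemma in the proof of Proposition \ref{prop:evol_x_hat} (the continuous bounded-variation local-time terms there not contributing to the quadratic variation).

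Since $s\mapsto[\hat X^{i,n}]_s$ carries no atoms, I would split $\int_0^t=\sum_{m=1}^{k-1}\int_{\varsigma^{(m-1)}}^{\varsigma^{(m)}}+\int_{\varsigma^{(k-1)}}^{t}$, and on the $m$-th block substitute $s=u+\varsigma^{(m-1)}$. The defining relations \eqref{eq:general_P_defn_by_parts} of the concatenation give $\hat X^{i,n}_{u+\varsigma^{(m-1)}}=\hat X^{i,n,(m)}_u$ and $A^{i,n}_{u+\varsigma^{(m-1)}}=A^{i,n,(m)}_u$ for $u\in[0,\tilde\varsigma^{(m)})$, while \eqref{eq:sigma_k} and the fact that $\hat X^{i,n,(m)}$ solves \eqref{eq:sys_X_hat_k} with diffusion coefficient $\sigma^{(m)}(r,x)=\sigma(r+\varsigma^{(m-1)},x)$ give $\sigma(u+\varsigma^{(m-1)},\hat X^{i,n,(m)}_u)^2\di u=\di[\hat X^{i,n,(m)}]_u$. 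Hence the $m$-th integral equals $\varepsilon^{-1}\int_0^{\tilde\varsigma^{(m)}}\mathbbm{1}_{[A^{i,n,(m)}_u,\,A^{i,n,(m)}_u+\varepsilon)}(\hat X^{i,n,(m)}_u)\di[\hat X^{i,n,(m)}]_u$ and the last one equals $\varepsilon^{-1}\int_0^{t-\varsigma^{(k-1)}}\mathbbm{1}_{[A^{i,n,(k)}_u,\,A^{i,n,(k)}_u+\varepsilon)}(\hat X^{i,n,(k)}_u)\di[\hat X^{i,n,(k)}]_u$. Letting $\varepsilon\downarrow0$ and passing the limit through the finite sum — legitimate because each summand separately converges, being precisely the defining limit for the local time of the continuous semimartingale $\hat X^{i,n,(m)}$ along $A^{i,n,(m)}$ — the left side tends to $\ell_t^{A^{i,n}}(\hat X^{i,n})$ and the right side to $\sum_{m=1}^{k-1}\ell_{\tilde\varsigma^{(m)}}^{A^{i,n,(m)}}(\hat X^{i,n,(m)})+\ell_{t-\varsigma^{(k-1)}}^{A^{i,n,(k)}}(\hat X^{i,n,(k)})$, which is exactly the identity in Lemma \ref{lemma:local_time}. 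Corollary \ref{cor:local_time} follows from the same computation: for $t<\varsigma^{(1)}$ the sum is empty and the identity is immediate from $\hat X^{i,n}\equiv\hat X^{i,n,(1)}$, $A^{i,n}\equiv A^{i,n,(1)}$ on $[0,\varsigma^{(1)})$, while for $t\ge\varsigma^{(n)}$ one runs the argument above with $k=n+1$.

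The only point requiring genuine care is the one flagged in the first paragraph: establishing that $\hat X^{i,n}$ is a continuous semimartingale whose quadratic variation, restricted to each block $[\varsigma^{(m-1)},\varsigma^{(m)})$, is the time-shift of $[\hat X^{i,n,(m)}]$. I would handle this either by reading off $\di[\hat X^{i,n}]_s=\sigma(s,\hat X^{i,n}_s)^2\di s$ from the integral representation already derived in the proof of Proposition \ref{prop:evol_x_hat}, or, more structurally, by noting that $\hat X^{i,n}$ is a concatenation of the reflected diffusions $\hat X^{i,n,(m)}$ glued continuously at the stopping times $\varsigma^{(m)}$, so that its quadratic variation is the analogous concatenation of the $[\hat X^{i,n,(m)}]$'s. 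Once this is in place, what remains is a routine change of variables in a Lebesgue--Stieltjes integral followed by the $\varepsilon\downarrow0$ passage, and the same template yields Corollary \ref{cor:local_time}.
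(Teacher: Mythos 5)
Your argument is correct and follows essentially the same change-of-variables and occupation-time strategy as the paper's proof of Lemma \ref{lemma:local_time}, of which Corollary \ref{cor:local_time} is stated as an immediate consequence. The two extra precautions you flag — that $[\hat X^{i,n}]$ is Lebesgue-absolutely-continuous and can be read off non-circularly from the integral representation of $\hat X^{i,n}$ obtained prior to invoking the lemma, and that the $\varepsilon\downarrow 0$ limit commutes with the finite sum over blocks — are both valid and simply make explicit what the paper leaves implicit.
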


\begin{proof}[Proof of Lemma \ref{lemma:local_time}]
Consider $\ell_{t-\varsigma^{(k-1)}}^{A^{i,n, (k)}}(\hat X^{i,n,(k)})$ for $t \in [\varsigma^{(k-1)}, \varsigma^{(k)})$. By the definition of the local time in \eqref{eq:local_time_defn},
\begin{align*}
    \ell_{t-\varsigma^{(k-1)}}^{A^{i,n, (k)}}(\hat X^{i,n,(k)})
    &= \lim_{\varepsilon \rightarrow 0} \frac{1}{\varepsilon} \int_{0}^{t-\varsigma^{(k-1)}} \mathbbm{1}_{[A^{i,n,(k)}_s, A^{i,n,(k)}_s + \varepsilon)} (\hat X_s^{i,n,(k)}) \bigl( \sigma^{(k)}(s, \hat X_s^{i,n,(k)}) \bigr)^2 \hspace{-7pt} \ds  \\
    &= \lim_{\varepsilon \rightarrow 0} \frac{1}{\varepsilon} \int_{\varsigma^{(k-1)}}^{t} \mathbbm{1}_{[A^{i,n,(k)}_{s-\varsigma^{(k-1)}}, A^{i,n,(k)}_{s-\varsigma^{(k-1)}} + \varepsilon)} (\hat X_{s-\varsigma^{(k-1)}}^{i,n,(k)})\bigl( \sigma^{(k)}(s-\varsigma^{(k-1)}, \hat X_{s-\varsigma^{(k-1)}}^{i,n,(k)}) \bigr)^2 \hspace{-7pt} \ds  \\
    &= \lim_{\varepsilon \rightarrow 0} \frac{1}{\varepsilon} \int_{\varsigma^{(k-1)}}^{t} \mathbbm{1}_{[ A^{i,n}_s, A^{i,n}_s + \varepsilon)}(\hat X^{i,n}_s) \di \langle  \hat X^{i,n} \rangle_s,
\end{align*}
where we have applied the usual change of variables $s \mapsto s + \varsigma^{(k-1)}$ and used that, for
    $s \in [\varsigma^{(k-1)}, \varsigma^{(k)})$,
    $\hat X^{i,n,(k)}_{s-\varsigma^{(k-1)}} = \hat X^{i,n}_s$ and $A_{s-\varsigma^{(k-1)}}^{i,n,(k)} =  A_s^{i,n}$ by Definition~\ref{defn:piecewise_process}.

Similarly, for $m = 1, \dots, k-1$, we have that 
\begin{equation*}
    \ell_{\tilde \varsigma^{(m)}}^{A^{i,n,(m)}} (\hat{X}^{i,n,(m)})
    = \lim_{\varepsilon \rightarrow 0} \frac{1}{\varepsilon} \int_{\varsigma^{(m-1)}}^{\varsigma^{(m)}} \mathbbm{1}_{[ A^{i,n}_s, A^{i,n}_s + \varepsilon]}( \hat X_s^{i,n}) \di \langle  \hat X^{i,n} \rangle_s.
\end{equation*}
Summing these expressions together, we conclude that
\begin{equation*}
    \sum_{m = 1}^{k-1} \ell_{\tilde \varsigma^{(m)}}^{A^{i,n,(m)}} (\hat{X}^{i,n,(m)}) + \ell_{t - \varsigma^{(k-1)} }^{A^{n, (k)}}(\hat{X}^{i,n,(k)})
    = \lim_{\varepsilon \rightarrow 0} \frac{1}{\varepsilon} \int_{0}^{t} \mathbbm{1}_{[ A^{i,n}_s, A^{i,n}_s + \varepsilon]}(\hat X^{i,n}_s) \di \langle  \hat X^{i,n} \rangle_s
    = \ell_{t}^{A^{i,n}}(\hat{X}^{i,n}),
\end{equation*}
for all $t \in [\varsigma^{(k-1)}, \varsigma^{(k)})$.
\end{proof}
Finally, we used the following lemma in the proof of Proposition~\ref{prop:distribution_tau_i} in Section~\ref{sec:existence_sys}.

\begin{lemma}\label{lemma:rate_integral}
For all $i = 1, \dots, n$ and $k = 1, \dots, n$,
\begin{equation*}
    \int_0^{\tilde\varsigma^{(k)}} \hspace{-8pt} \gamma^{i,(k)}(s) \di \ell_s^{A^{i,n,(k)}} (\hat X^{i,n,(k)}) = 
    \int_{\varsigma^{(k-1)}}^{\varsigma^{(k)}} \hspace{-12pt} \gamma(s, C^{i,n}_s) \di \ell_s^{A^{i,n}} (\hat X^{i,n}).
\end{equation*}
\end{lemma}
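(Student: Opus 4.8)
The plan is to prove the identity via the change of variables $s\mapsto s-\varsigma^{(k-1)}$, which carries the $k$-th intermediate system onto the time-segment $[\varsigma^{(k-1)},\varsigma^{(k)})$ of the concatenated trajectories of Definition~\ref{defn:piecewise_process}; recall that $\varsigma^{(k)}=\varsigma^{(k-1)}+\tilde\varsigma^{(k)}$. Here $C^{i,n}$ denotes the current contagiousness of the globally reflected system $\hat{\mathbf X}^n$ of Proposition~\ref{prop:evol_x_hat}, i.e.\ $C^{i,n}_s:=\int_{s-\bar d}^{s}\varrho(s-r)\bigl(I^{i,n}_r-I^{i,n}_{r-\bar d}\bigr)\di r$ with $I^{i,n}$ as in that proposition; using $A^{i,n}_s=a_0+\alpha\int_0^{s}\varrho(s-r)I^{i,n}_r\di r$ together with $\mathrm{supp}(\varrho)=[0,\bar d]$ and $\Vert\varrho\Vert_{L^1}=1$, one obtains the handy reformulation $\alpha C^{i,n}_s=A^{i,n}_s-A^{i,n}_{(s-\bar d)^+}$, valid for all $s\geq 0$. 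The case $k=1$ is immediate: $\varsigma^{(0)}=0$, $\gamma^{i,(1)}(\cdot)=\gamma(\cdot,0)$, and $C^{i,n}\equiv 0$ on $[0,\varsigma^{(1)})$ since no particle is infected before $\varsigma^{(1)}$, so $A^{i,n}$ is still at $a_0$ and hence $C^{i,n}=0$ there (and Corollary~\ref{cor:local_time} takes care of the local-time term). So from now on I take $k\geq 2$.

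I would first dispose of the local-time term. By Lemma~\ref{lemma:local_time}, for $t\in[\varsigma^{(k-1)},\varsigma^{(k)})$ one has $\ell^{A^{i,n,(k)}}_{t-\varsigma^{(k-1)}}(\hat X^{i,n,(k)})=\ell^{A^{i,n}}_t(\hat X^{i,n})-\ell^{A^{i,n}}_{\varsigma^{(k-1)}}(\hat X^{i,n})$; since both sides are continuous and non-decreasing in $t$, the associated Lebesgue--Stieltjes measures agree under the shift $s\mapsto s-\varsigma^{(k-1)}$, so that for any bounded Borel $f$,
\[
\int_0^{\tilde\varsigma^{(k)}} f\bigl(u+\varsigma^{(k-1)}\bigr)\,\di\ell^{A^{i,n,(k)}}_u(\hat X^{i,n,(k)})=\int_{\varsigma^{(k-1)}}^{\varsigma^{(k)}} f(s)\,\di\ell^{A^{i,n}}_s(\hat X^{i,n}),
\]
the endpoints being irrelevant because the local time is atomless in time. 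Taking $f(s)=\gamma(s,C^{i,n}_s)$ and recalling from \eqref{eq:gamma_i_step_k} that $\gamma^{i,(k)}(u)=\gamma\bigl(u+\varsigma^{(k-1)},C^{i,n,(k)}_u\bigr)$, the lemma is reduced to the pointwise identity $C^{i,n,(k)}_u=C^{i,n}_{u+\varsigma^{(k-1)}}$ for $u\in[0,\tilde\varsigma^{(k)})$.

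I expect this contagiousness identity to be the main obstacle, as it requires unwinding the piecewise bookkeeping encoded in \eqref{eq:contagious_prop_k}. Writing $s=u+\varsigma^{(k-1)}\in[\varsigma^{(k-1)},\varsigma^{(k)})$, the leading term of \eqref{eq:contagious_prop_k} equals $A^{i,n,(k)}_u=A^{i,n}_s$ by Definition~\ref{defn:piecewise_process}, so, in view of $\alpha C^{i,n}_s=A^{i,n}_s-A^{i,n}_{(s-\bar d)^+}$, it suffices to verify that the subtracted terms in \eqref{eq:contagious_prop_k} total $A^{i,n}_{(s-\bar d)^+}$. This I would do by a case analysis on the position of $s-\bar d$ relative to $0=\varsigma^{(0)}<\varsigma^{(1)}<\cdots<\varsigma^{(k-1)}$, each case isolating exactly one nonzero contribution which, via Definition~\ref{defn:piecewise_process}, is identified with the value of $A^{i,n}$ on the relevant segment: if $s-\bar d\geq\varsigma^{(k-1)}$ (equivalently $\bar d\leq u$), the term $A^{i,n,(k)}_{u-\bar d}$ contributes $A^{i,n}_{s-\bar d}$; if $\varsigma^{(j)}\leq s-\bar d<\varsigma^{(j+1)}$ for some $0\leq j<k-1$, the summand $A^{i,n,(j+1)}_{s-\bar d-\varsigma^{(j)}}$ contributes $A^{i,n}_{s-\bar d}$; and in the remaining case $s-\bar d<0$, where $A^{i,n}_{(s-\bar d)^+}=A^{i,n}_0=a_0$, the identity is checked directly from \eqref{eq:contagious_prop_k}. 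This yields $\alpha C^{i,n,(k)}_u=A^{i,n}_s-A^{i,n}_{(s-\bar d)^+}=\alpha C^{i,n}_s$, and combining this with the change-of-variables identity from the previous paragraph completes the proof for all $i$ and $k$.
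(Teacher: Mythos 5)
Your approach matches the paper's: a time-shift change of variables carries the step-$k$ local-time integral onto $[\varsigma^{(k-1)},\varsigma^{(k)})$, and the claim then reduces to identifying the integrands via the definitions of $\gamma^{i,(k)}$ and the piecewise contagiousness. The only cosmetic difference is that you invoke Lemma~\ref{lemma:local_time} directly and argue at the level of the associated Lebesgue--Stieltjes measures, and then you spell out the bookkeeping behind $C^{i,n,(k)}_u=C^{i,n}_{u+\varsigma^{(k-1)}}$ using the reformulation $\alpha C^{i,n}_s=A^{i,n}_s-A^{i,n}_{(s-\bar d)^+}$, whereas the paper re-derives the local-time shift via Riemann--Stieltjes approximating sums and then simply appeals to the definitions of $\gamma^{i,(k)}$ and $C^{i,n}$ without writing out that final identification.
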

\begin{proof}
Define a partition $\mathsf{P}_m$ of $[\varsigma^{(k-1)},\varsigma^{(k)}]$; applying the change of variables $s \mapsto s + \varsigma^{(k-1)}$ and writing down the random Stieltjes integral as an infinite sum, we have:
\begin{align*}
    \int_0^{\tilde\varsigma^{(k)}} \hspace{-8pt} \gamma^{i,(k)}(s) &\di \ell_s^{A^{i,n,(k)}} (\hat X^{i,n,(k)})
    = \int_{\varsigma^{(k-1)}}^{\varsigma^{(k)}} \hspace{-8pt} \gamma^{i,(k)}(s-\varsigma^{(k-1)}) \di \ell_{s-\varsigma^{(k-1)}}^{A^{i,n,(k)}} (\hat X^{i,n,(k)}) \\
    &= \lim_{m \rightarrow \infty}
    \sum_{s_j \in \mathsf{P}_m} \gamma^{i,(k)}(s_j-\varsigma^{(k-1)}) \left(
    \ell_{s_{j+1} -\varsigma^{(k-1)}}^{A^{i,n,(k)}}(\hat{X}^{i,n,(k)})
    - \ell_{s_j - \varsigma^{(k-1)}}^{A^{i,n,(k)}}(\hat{X}^{i,n,(k)}) \right).
\end{align*}
By similar computations to those in the proof of Lemma \ref{lemma:local_time}, we have that
\begin{align*}
    \ell_{s_{j+1} -\varsigma^{(k-1)}}^{A^{i,n,(k)}}(\hat{X}^{i,n,(k)})
    - \ell_{s_j - \varsigma^{(k-1)}}^{A^{i,n,(k)}}(\hat{X}^{i,n,(k)})
    &= \lim_{\varepsilon \rightarrow 0} \frac{1}{\varepsilon} \int_{s_{j}}^{s_{j+1}}
    \mathbbm{1}_{[A_s^{i,n}, A_s^{i,n} + \varepsilon]} (\hat X^{i,n}) \di \langle \hat X^{i,n} \rangle_s \\
    &= \ell_{s_{j+1}}^{A^{i,n}} (\hat X^{i,n}) - \ell_{s_j}^{A^{i,n}} (\hat X^{i,n}),
\end{align*}
and in turn also
\begin{equation*}
    \int_0^{\tilde\varsigma^{(k)}} \hspace{-8pt} \gamma^{i,(k)}(s) \di \ell_s^{A^{i,n,(k)}} (\hat X^{i,n,(k)})
    = \int_{\varsigma^{(k-1)}}^{\varsigma^{(k)}} \hspace{-8pt} \gamma^{i,(k)}(s-\varsigma^{(k-1)}) \di \ell_{s}^{A^{i,n}} (\hat X^{i,n}).
\end{equation*}
By the definition of $\gamma^{i,(k)}(t)$ and $C^{i,n, (k)}_t$ in Appendix~\ref{app:intermediate_systems} (see \eqref{eq:contagious_prop_k}), the claim follows.
\end{proof}
\end{appendices}


\bibliographystyle{alpha}
\bibliography{biblio_epidemic}

\newcommand{\etalchar}[1]{$^{#1}$}
\begin{thebibliography}{BDWO23}

\bibitem[Bar20]{barnes}
C.~L. Barnes.
\newblock Hydrodynamic limit and propagation of chaos for {B}rownian particles reflecting from a {N}ewtonian barrier.
\newblock {\em Ann. Appl. Probab.}, 30(4):1582--1613, 2020.

\bibitem[BBF18a]{becherer}
D.~Becherer, T.~Bilarev, and P.~Frentrup.
\newblock Approximating diffusion reflections at elastic boundaries.
\newblock {\em Electron. Commun. Probab.}, 23:Paper No. 40, 12 pp., 2018.

\bibitem[BBF18b]{Becherer_finance}
D.~Becherer, T.~Bilarev, and P.~Frentrup.
\newblock Optimal liquidation under stochastic liquidity.
\newblock {\em Finance Stoch.}, 22(1):39--68, 2018.

\bibitem[BCCF19]{BrauerChavezFeng}
F.~Brauer, C.~Castillo-Chavez, and Z.~Feng.
\newblock {\em {M}athematical {M}odels in {E}pidemiology}, volume~69 of {\em Texts in Applied Mathematics}.
\newblock Springer, New York, NY, 2019.

\bibitem[BCS04]{burdzy_sylvester}
K.~Burdzy, Z.-Q. Chen, and J.~Sylvester.
\newblock The heat equation and reflected {B}rownian motion in time-dependent domains.
\newblock {\em Ann. Probab.}, 32(1B):775--804, 2004.

\bibitem[BDHO21]{berestycki_etal_plateaus}
H.~Berestycki, B.~Desjardins, B.~Heintz, and J.-M. Oury.
\newblock Plateaus, rebounds and the effects of individual behaviours in epidemics.
\newblock {\em Sci. Rep.}, 11:Article 18339, 2021.

\bibitem[BDWO23]{berestycki_desjardins_weitz_oury}
H.~Berestycki, B.~Desjardins, J.~S. Weitz, and J.-M. Oury.
\newblock Epidemic modeling with heterogeneity and social diffusion.
\newblock {\em J. Math. Biol.}, 86(4):Paper No. 60, 59 pp., 2023.

\bibitem[BJR09]{BieleckiJeanblancRutkowski_2009}
T.~R. Bielecki, M.~Jeanblanc, and M.~Rutkowski.
\newblock {\em Credit Risk Modeling}.
\newblock Number~2 in Osaka University CSFI Lecture Notes Series. Osaka University Press, Osaka, 2009.

\bibitem[BN02]{burdzy_nualart}
K.~Burdzy and D.~Nualart.
\newblock {B}rownian motion reflected on {B}rownian motion.
\newblock {\em Probab. Theory Related Fields}, 122(4):471--493, 2002.

\bibitem[Bri10]{britton_survey}
T.~Britton.
\newblock Stochastic epidemic models: a survey.
\newblock {\em Math. Biosci.}, 225(1):24--35, 2010.

\bibitem[BS22]{baker_shkolnikov_undercooling}
G.~Baker and M.~Shkolnikov.
\newblock Zero kinetic undercooling limit in the supercooled {S}tefan problem.
\newblock {\em Ann. Inst. Henri Poincar\'{e} Probab. Stat.}, 58(2):861--871, 2022.

\bibitem[BY82]{barlow_yor}
M.~T. Barlow and M.~Yor.
\newblock Semimartingale inequalities via the {G}arsia-{R}odemich-{R}umsey lemma, and applications to local times.
\newblock {\em J. Funct. Anal.}, 49(2):198--229, 1982.

\bibitem[Fel52]{Feller_1952}
W.~Feller.
\newblock The parabolic differential equations and the associated semi-groups of transformations.
\newblock {\em Ann. of Math. (2)}, 55:468--519, 1952.

\bibitem[FPP21]{forien_pang_pardoux}
R.~Forien, G.~Pang, and {\'E}.~Pardoux.
\newblock Epidemic models with varying infectivity.
\newblock {\em SIAM J. Appl. Math.}, 81(5):1893--1930, 2021.

\bibitem[FS26]{Fausti-Soj_26}
E.~Fausti and A.~S{\o}jmark.
\newblock A free boundary problem for the mean-field limit of diffusing particles with nonlinear boundary reactivity.
\newblock arXiv:2604.12797, 2026.

\bibitem[HM22]{hambly_meier}
B.~Hambly and J.~Meier.
\newblock {M}c{K}ean-{V}lasov equations with positive feedback through elastic stopping times.
\newblock {\em Electron. Commun. Probab.}, 27:Paper No. 41, 13 pp., 2022.

\bibitem[IMJ63]{ito_mckean}
K.~It{\^o} and H.~P. McKean~Jr.
\newblock {B}rownian motions on a half line.
\newblock {\em Illinois J. Math.}, 7:181--231, 1963.

\bibitem[JYC09]{jeanblanc}
M.~Jeanblanc, M.~Yor, and M.~Chesney.
\newblock {\em Mathematical methods for financial markets}.
\newblock Springer Finance. Springer-Verlag London, Ltd., London, 2009.

\bibitem[Kni01]{Knight}
F.~B. Knight.
\newblock On the path of an inert object impinged on one side by a {B}rownian particle.
\newblock {\em Probab. Theory Related Fields}, 121(4):577--598, 2001.

\bibitem[Kry09]{Krylov_book}
N.~V. Krylov.
\newblock {\em {C}ontrolled diffusion processes}, volume~14 of {\em Stochastic Modelling and Applied Probability}.
\newblock Springer-Verlag, Berlin, 2009.
\newblock Translated from the 1977 Russian original by A. B. Aries, Reprint of the 1980 edition.

\bibitem[LGB{\etalchar{+}}20]{covid2}
S.~A. Lauer, K.~H. Grantz, Q.~Bi, F.~K. Jones, Q.~Zheng, H.~R. Meredith, A.~S. Azman, N.~G. Reich, and J.~Lessler.
\newblock The incubation period of coronavirus disease 2019 ({COVID}-19) from publicly reported confirmed cases: estimation and application.
\newblock {\em Ann. Intern. Med.}, 172(9):577--582, 2020.

\bibitem[LP06]{luschgy_pages_06}
H.~Luschgy and G.~Pag{\`e}s.
\newblock Functional quantization of a class of {B}rownian diffusions: a constructive approach.
\newblock {\em Stochastic Process. Appl.}, 116(2):310--336, 2006.

\bibitem[McK75]{mckean_local}
H.~P. McKean.
\newblock {B}rownian local times.
\newblock {\em Adv. Math.}, 16:91--111, 1975.

\bibitem[Pil14]{Pilipenko_book}
A.~Pilipenko.
\newblock {\em An introduction to stochastic differential equations with reflection}.
\newblock Lectures in Pure and Applied Mathematics. Potsdam University Press, 2014.

\bibitem[PP22]{pang_pardoux_2022}
G.~Pang and {\'E}.~Pardoux.
\newblock Functional limit theorems for non-{M}arkovian epidemic models.
\newblock {\em Ann. Appl. Probab.}, 32(3):1615--1665, 2022.

\bibitem[Pro05]{protter}
P.~E. Protter.
\newblock {\em {S}tochastic integration and differential equations}, volume~21 of {\em Stochastic Modelling and Applied Probability}.
\newblock Springer-Verlag, Berlin, 2005.
\newblock Second edition. Version 2.1, Corrected third printing.

\bibitem[RY99]{revuz_yor}
D.~Revuz and M.~Yor.
\newblock {\em Continuous martingales and {B}rownian motion}, volume 293 of {\em Grundlehren der mathematischen Wissenschaften}.
\newblock Springer-Verlag, Berlin, third edition, 1999.

\bibitem[VHP22]{vuong_hauray_pardoux}
Y.~V. Vuong, M.~Hauray, and {\'E}.~Pardoux.
\newblock Conditional propagation of chaos in a spatial stochastic epidemic model with common noise.
\newblock {\em Stoch. Partial Differ. Equ. Anal. Comput.}, 10(3):1180--1210, 2022.

\bibitem[YDZ{\etalchar{+}}20]{covid1}
L.~Yang, J.~Dai, J.~Zhao, Y.~Wang, P.~Deng, and J.~Wang.
\newblock Estimation of incubation period and serial interval of {COVID}-19: analysis of 178 cases and 131 transmission chains in {H}ubei province, {C}hina.
\newblock {\em Epidemiol. Infect.}, 148:e117, 2020.

\end{thebibliography}

\end{document}